\documentclass[11pt,a4paper]{amsart}

\usepackage{tikz}
\usepackage{tikz-cd}
\usepackage{url}
\usepackage[colorlinks=true,linkcolor=blue,citecolor=blue,urlcolor=blue]{hyperref}
\usepackage{pgfplots}   
\usepgfplotslibrary{polar}   
\pgfplotsset{compat=newest}   
\usepackage{graphicx}
\usepackage{subcaption}
\title{Inner functions associated to lifts of transcendental entire functions}
\date{\today}
\author{Eleni Betsakou}
\address{School of Mathematics and Statistics, The Open University, Walton Hall, Milton Keynes MK7 6AA, UK}
\email{eleni.betsakou@open.ac.uk}

\keywords{Associated inner function, Fatou components, infinite degree, lift}

\makeatletter
\@namedef{subjclassname@2020}{%
	\textup{2020} Mathematics Subject Classification}
\makeatother

\subjclass[2020]{30K20, 30D05, 37F10, 47B33} 

\usepackage[utf8x]{inputenc}

\usepackage[UKenglish]{babel}

\usepackage{amssymb,mathrsfs,mathtools,amsmath,amsthm}

\usepackage[shortlabels]{enumitem}   

\numberwithin{equation}{section}

\usepackage{color}

\usepackage[a4paper]{geometry}

\newgeometry{twoside=false,textheight=\textheight+10ex,footskip=2.5\footskip,left=7em,right=7em}

\usepackage[all]{xy}   

\usepackage[backend=biber,style=alphabetic,doi=false,url=false,isbn=false,eprint=false,maxbibnames=99]{biblatex}
\DeclareFieldFormat{title}{#1}
\DeclareFieldFormat[article]{title}{#1}
\DeclareFieldFormat[incollection]{title}{#1}

\renewbibmacro{in:}{}

\newtheorem{thm}{Theorem}[section]
\newtheorem{prop}[thm]{Proposition}

\newtheorem {lemma}[thm]{Lemma}
\newtheorem {cor}[thm] {Corollary}

\theoremstyle{definition}

\newtheorem{defn}[thm]{Definition}  
\newtheorem{eg}[thm]{Example}
\newtheorem{rmk}[thm]{Remark}  
  
\newtheorem{obs}[thm]{Observation}

\newcommand{\C}{\mathbb{C}}
\newcommand{\R}{\mathbb{R}}
\newcommand{\N}{\mathbb{N}}
\newcommand{\Z}{\mathbb{Z}}

\newcommand{\D}{\mathbb{D}}  

\begin{document}
	\begin{abstract}
		Let $f$ be a transcendental entire function, $V$ be a simply connected Fatou component of $f,$ and $U$ be a Fatou component with $f(U)\subset V.$ There is a natural way to associate $f|_U$ to an inner function, namely a function $g_f:=\psi^{-1}\circ f\circ\varphi,$ where $\varphi:\D\to U$ and $\psi:\D\to V$ are Riemann maps. Inner functions have been used as a tool in the study of the iterates of transcendental entire, and more recently meromorphic, functions. However, there are only a few examples where associated inner functions have been calculated explicitly, with the case where $f$ has infinite degree in $U$ being the least well understood and more complicated.
		 
		In this paper, we introduce a general method for calculating associated inner functions to a wide class of entire functions arising as `lifts'. In particular, if $f$ is a lift of a transcendental entire function $h,$ we show that an inner function associated to $f|_U$ can be obtained by relating it to an inner function associated to $h|_G,$ where $G$ is the Fatou component that lifts to $U.$ This result significantly generalises the main part of a theorem by Evdoridou, Rempe and Sixmith, and can be applied to several functions that have been studied so far. In both finite- and infinite-degree settings, the results hold for forward-invariant Fatou components as well as for wandering domains.
	\end{abstract}
	\maketitle
	\section{Introduction}
	Let $f$ be a transcendental entire function, and denote by $f^n$ the $n$-th iterate of $f.$ Then the complex plane is divided into two complementary sets, the \emph{Fatou set} $F(f),$ which is the set of all points for which $\{f^n\}_n$ is a normal family in some neighbourhood, and the \emph{Julia set} $J(f),$ which is often referred to as the chaotic set. By definition, the Fatou set is open, and it is the union of its connected components, which are called \emph{Fatou components}. What is more, $F(f)$ is \emph{completely invariant}, meaning that $f(F(f))\subset F(f)$ and $f^{-1}(F(f))\subset F(f),$ and the same inclusions hold for $J(f).$ 
	
	Due to the complete invariance of the Fatou set, Fatou components are mapped by $f$ into Fatou components. A Fatou component $U$ can be \emph{periodic}, which means that there exists $n\in\N$ such that $f^n(U)\subset U;$ if $U$ is eventually mapped into a periodic Fatou component, then $U$ is called \emph{preperiodic}. A Fatou component which is not preperiodic is called a \emph{wandering domain}. Periodic Fatou components can be classified further; they can be \emph{attracting} or \emph{parabolic basins}, \emph{Siegel discs}, or \emph{Baker domains}. The latter do not exist for rational functions, and they are, by definition, periodic Fatou components in which the iterates converge to the essential singularity at $\infty.$ Also, a Fatou component $U$ is called \emph{forward-invariant} if $f(U)\subset U.$ More details on the classification of Fatou components can be found in \cite[Sections 4.1, 4.2]{Ber93}.
	
	Let $V\subsetneq\C$ be a simply connected domain and $U$ be a connected component of $f^{-1}(V).$ Then $U$ is also simply connected, so we can consider Riemann maps $\varphi:\D\to U$ and $\psi:\D\to V$ and define the function $g_f=\psi^{-1}\circ f\circ\varphi:\D\to\D$ (see Figure \ref{assinn}).
	\begin{figure}[h]
		\centering
		\includegraphics[width=0.55\linewidth]{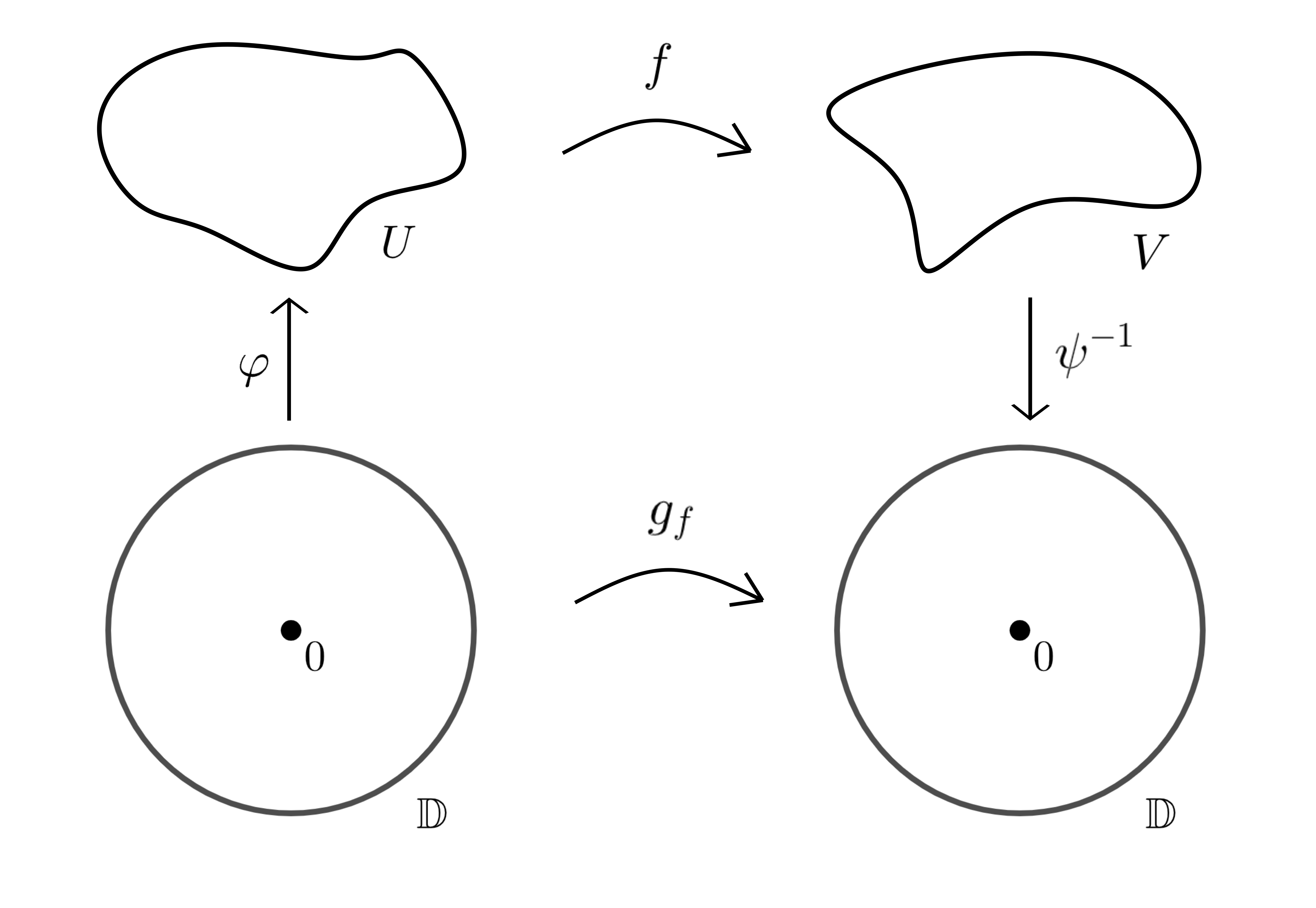}
		\caption{Constructing an associated inner function to $f|_U$}
		\label{assinn}
	\end{figure}	
	It can be shown that $g_f$ is an \emph{inner function}, that is, a holomorphic self-map of the unit disc for which radial limits exist almost everywhere and belong to the unit circle. When an inner function is constructed this way, it is called an \emph{inner function associated to} $f|_U.$ In the simpler case where $U=V,$ we can use the same Riemann map, and then $g_f$ is called a \emph{dynamically associated inner function to} $f|_U$ and it is unique up to a conformal conjugacy.
	
	There is a growing interest in inner functions associated to Fatou components. One reason for this is that it is often simpler to study a problem in the unit disc, using inner functions, and then transfer the results to the dynamical plane (see, for example, \cite{BarK07}, \cite{BeniEFRS}, \cite{BFXK17}, \cite{FJ25}). Another reason is that, in the case of transcendental entire functions, the associated inner functions that have been calculated so far behave relatively nicely near the boundary of the unit disc. This is not the case for inner functions in general, so the natural question that arises is if this nice behaviour holds for all inner functions that are associated to Fatou components, or under which hypotheses for the transcendental entire function and the Fatou component the associated inner function behaves nicely (see, for example, \cite[Theorem C]{JoF25}). 
	
	Finding an inner function associated to a Fatou component of a transcendental entire function can be a challenging task, and there are only a few examples for which such an inner function has been calculated explicitly. It seems that the first such example appears in \cite[§V]{topf}. There, Töpfer studied the function $f(z)=\sin z,$ which has two immediate parabolic basins at 0. Both basins have the same dynamically associated inner function, which is of the form $$g_f(z)=\frac{z^2+k}{kz^2+1},\,z\in\D,$$ where $k=\frac{1}{3}$ (see \cite[p. 463]{ERS20}). Another example is the family of functions $f_\lambda(z)=\lambda e^z,$ where $\lambda\in\C$ is such that $f_\lambda$ has a completely invariant attracting basin. Devaney and Goldberg studied this family in \cite{dg87} and found that a dynamically associated inner function to this basin is $$g_{f_\lambda}(z)=\exp\Big(i\,\frac{\mu+\bar{\mu}z}{1+z}\Big),\,z\in\D,$$ where $\mu$ belongs to the upper half-plane $\mathbb{H}$ and depends on $\lambda.$ Also, Baker and Domínguez showed in \cite[Section 5]{BD99} that the function $f(z)=z+e^{-z}$ has infinitely many forward-invariant Baker domains $\{U_k\}_{k\in\mathbb{Z}}$ such that $U_{k+1}=U_k+2\pi i,$ for all $k\in\mathbb{Z},$ and that $$g_f(z)=\frac{3z^2+1}{3+z^2},\,z\in\D,$$ which is a Blaschke product of degree 2, is an inner function dynamically associated to each one of these Baker domains. 
	
	In the first and third example mentioned in the previous paragraph, the associated inner function being a finite Blaschke product is not a mere coincidence; it follows from the fact that the function $f$ has finite degree in the Fatou component under consideration. The connection between the degree of a function in a Fatou component and the form of its associated inner function is a special case of \cite[Proposition 1.1]{ERS20}. 
	
	When the transcendental entire function $f$ has infinite degree in the Fatou component under consideration, finding an associated inner function is more complicated, because the inner function has infinite degree too. Such an inner function could be an infinite Blaschke product, a singular inner function (that is, an inner function with no zeros), or a product of a Blaschke product and a singular inner function (for more information and examples of inner functions, see §\ref{if}). In \cite{ERS20}, the authors studied this problem for some classes of transcendental entire functions, such as functions of the form $f_\lambda(z)=\lambda\sin z,\,\lambda\in(0,1)$ (\cite[Theorem 1.8]{ERS20}), $f(z)=P(z)e^{Q(z)},$ where $P,Q$ are polynomials (\cite[Corollary 6.2]{ERS20}), and $f_\lambda(z)=\lambda+z+e^{-z},\,\lambda>0$ (\cite[Theorem 1.9]{ERS20}). The latter functions are called Fatou functions, since $f(z)=1+z+e^{-z},\,z\in\C,$ was first studied by Fatou in \cite{Fatou} (see Figure \ref{figf}), and their Fatou set consists of a single component, which is a Baker domain. 
	\begin{figure}[h]
		\centering
		\includegraphics[width=0.4\linewidth]{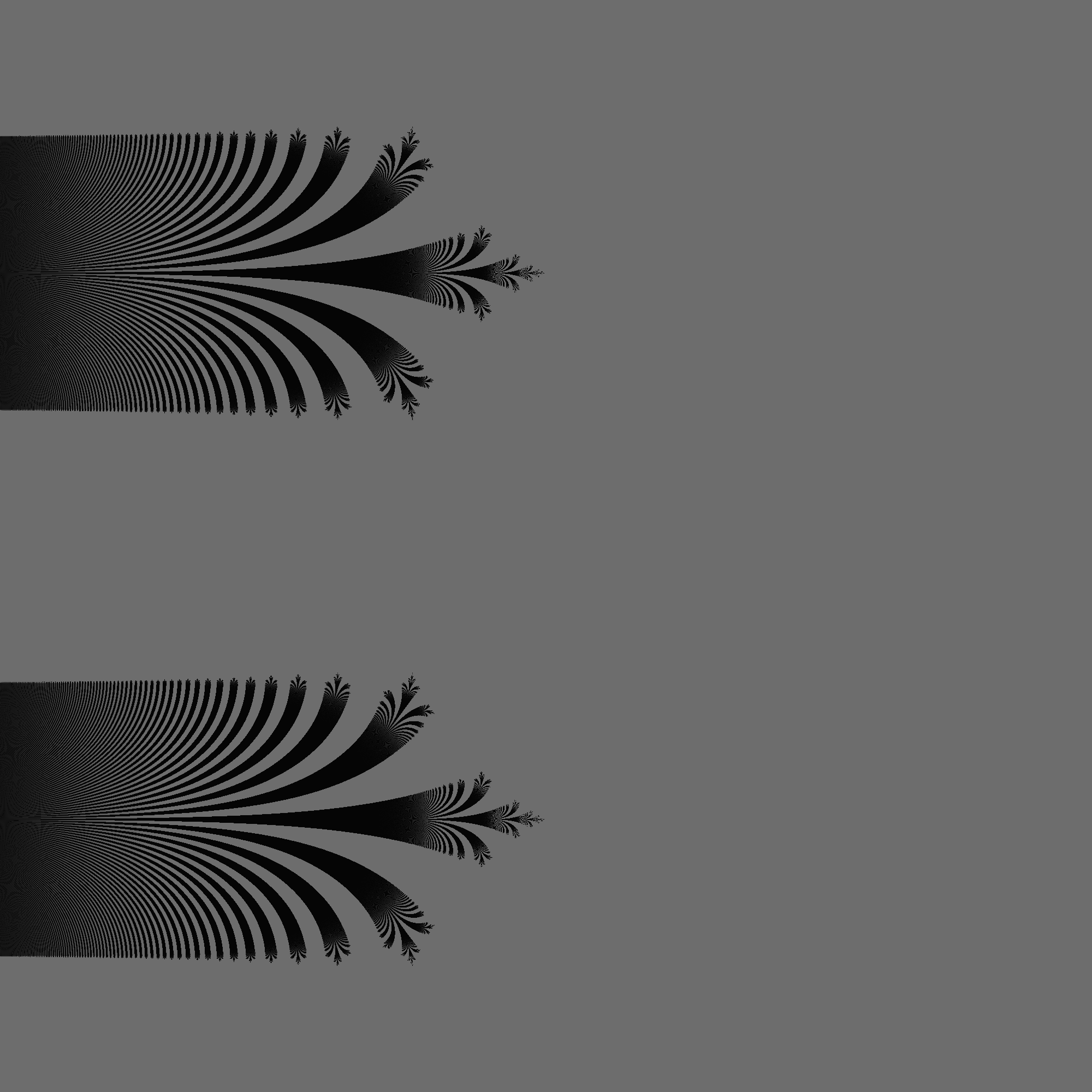}
		\caption{The Fatou (grey) and Julia (black) sets of the function $f(z)=1+z+e^{-z}.$}
		\label{figf}
	\end{figure}
	Compared with the aforementioned examples, the Fatou functions exhibit additional complexity; the set of singular values - the closure of the set of critical and finite asymptotic values - of $f_\lambda$ is not compactly contained in the Baker domain. This fact makes it difficult to calculate a dynamically associated inner function, since there is no general theory for such cases (compare with \cite[Theorem 1.2]{ERS20}). Evdoridou, Rempe and Sixmith showed that a dynamically associated inner function of the upper half-plane $\mathbb{H}$ to the Baker domain of $f_\lambda$ is $$g_\lambda:\mathbb{H}\to\mathbb{H};\;\;\; g_\lambda(z)=z-\lambda\frac{\cot z}{2},\,z\in\mathbb{H},$$ by using the periodicity of the set of fixed points of $f_\lambda.$ 
	
	A natural question that arises from the above discussion is whether there exists a general method that applies to as many of the functions mentioned so far as possible and yields their associated inner functions. By examining the transcendental entire functions discussed above, one can observe that several of them share a common feature: they are ‘lifts’ of transcendental entire functions. In fact, this is exactly what enables us to use the lifting method (\cite{Ber95}) in order to study their dynamics. Motivated by this observation, we develop a method of calculating an associated inner function for classes of functions that are lifts. Note that the lifting method was recently used in \cite[§5.3]{FJ25} to calculate an associated inner function to the attracting basin of a meromorphic function; in contrast to the method developed in this paper, the approach taken there passes from the lift to the original function.
	
	We denote by $\mathcal{L}$ the set of all transcendental entire functions $h:\C\to\C$ such that $h(\C^*)\subset\C^*,$ where $\C^*=\C\setminus\{0\}.$ Note that $h\in\mathcal{L}$ if and only if $h$ has the form 
	\begin{equation} \label{genf}
		h(w)=w^me^{H(w)}, \text{ for all } w\in\C,
	\end{equation}
	where $m\in\N\cup\{0\}$ and $H$ is a non-constant entire function. Let $h\in\mathcal{L}$ and $\pi_a(z)=e^{az},$ for all $z\in\C,$ where $a\in\C^*.$ We denote by $\mathcal{L}_{a,\,h}$ the set of all transcendental entire functions $f$ that satisfy $h\circ\pi_a=\pi_a\circ f$ in $\C.$ The functions in the class $\mathcal{L}_{a,\,h}$ are called the \emph{lifts} of $h.$
	
	The main theorem of this paper deals with the infinite-degree case and is stated below. Note that the assumptions (a)-(c) coincide with the hypotheses of \cite[Theorem 6.1]{ERS20} (see Theorem \ref{ers}).
	\begin{thm} \label{const}
		\textnormal{\textbf{(Construction Theorem)}} Let $h$ be a function of the form \eqref{genf} and assume that:
		\begin{enumerate}[(a), leftmargin=*, itemsep=0pt, topsep=0pt]
			\item $h$ has an (unbounded) forward-invariant Fatou component $G$ on which $h$ has infinite degree,
			\item there exists a point $b\in G,$ such that $h^{-1}(\{b\})\cap G$ contains exactly $p\ge0$ points, counting multiplicity, and
			\item an inner function dynamically associated to $h|_G,\,g_h,$ has a finite number $q\ge1$ of singularities on $\partial\D.$
		\end{enumerate}  
		If $f\in\mathcal{L}_{a,\,h},\,a\in\C^*,$ then exactly one of the following holds.
		\begin{enumerate}[(i), leftmargin=*, itemsep=0pt, topsep=0pt]
			\item $0\in G$ and $U:=\pi_a^{-1}(G)$ is connected. In this case, $b=0,\,p=m$ and $f|_U$ has a dynamically associated inner function of the form
			\begin{equation} \label{assf}
				g_f:\mathbb{H}\to\mathbb{H};\;\;\;g_f(z)=\sigma+mz+\frac{i}{2}\sum_{j=1}^{q}\Big(c_j\cdot\frac{e^{i\theta_j}+e^{2iz}}{e^{i\theta_j}-e^{2iz}}\Big),
			\end{equation} 
			where $\sigma,\,\theta_1,\dots,\theta_q$ are real numbers and $c_1,\dots,c_q$ are positive real numbers.
			\item $0\notin G$ and $\{U_k\}_{k\in\Z}$ are the components of $\pi_a^{-1}(G).$ Then $g_h$ is associated to $f|_{U_k},$ for each $k\in\Z,$ and it can be taken to be of the form   $$g_h:\D\to\D;\;\;\;g_h(z)=B(z)\cdot\exp\Big(-\sum_{j=1}^{q}\Big(c_j\cdot\frac{e^{i\theta_j}+z}{e^{i\theta_j}-z}\Big)\Big),$$ where $B$ is a finite Blaschke product of degree $p,\,\theta_j\in\R$ and $c_j>0,$ for all $j\in\{1,\dots,q\}.$
		\end{enumerate}			 
	\end{thm}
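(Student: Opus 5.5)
We argue in three stages: first the form of $g_h$, then the dichotomy (i)/(ii), and finally the lifted inner function in case (i).

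\emph{Form of $g_h$.} By hypotheses (a)--(c) we can invoke Theorem~\ref{ers} (i.e.\ \cite[Theorem 6.1]{ERS20}). It says that $g_h$ has only finitely many zeros, namely the $p$ preimages of $\psi^{-1}(b)$, after normalising $\psi(0)=b$. It also says that the singular part of $g_h$ is a finite sum of point masses at the $q$ singularities. Concretely, $g_h=B\cdot S$ with $B$ a finite Blaschke product of degree $p$ and
\[
S(z)=\exp\Big(-\sum_{j=1}^{q} c_j\,\frac{e^{i\theta_j}+z}{e^{i\theta_j}-z}\Big),\qquad c_j>0.
\]
There is no other singular measure, because a singular measure supported on finitely many points is atomic.

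\emph{The dichotomy.} $G$ is simply connected, since Fatou components of entire functions that are unbounded and invariant are simply connected (Baker). Hence $\pi_a^{-1}(G)$ is connected if and only if $G$ winds around $0$, i.e.\ $0\in G$. Otherwise $\pi_a$ restricted to each component is a conformal map onto $G$, and the components are the translates $U_k=U_0+2\pi i k/a$.

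In case (ii), the components of $\pi_a^{-1}(F(h))$ are Fatou components of $f$ by the standard lifting result \cite{Ber95}. Since $\pi_a\circ f=h\circ\pi_a$, the map $f$ sends $U_k$ into some $U_{k'}$. Taking the Riemann maps $\pi_a|_{U_k}^{-1}\circ\psi$ on $U_k$ and $\pi_a|_{U_{k'}}^{-1}\circ\psi$ on $U_{k'}$, the associated inner function is exactly $\psi^{-1}\circ h\circ\psi=g_h$. This proves (ii).

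\emph{Case (i).} Here $0\in G$, and $0$ is fixed by $h$ when $m\ge1$. When $m=0$ we instead take $b=0$, and $h^{-1}(0)=\emptyset$, so $p=0=m$. In either case we normalise $\psi(0)=0$, so the zeros of $g_h$ are exactly the points of $\psi^{-1}(h^{-1}(0))$, namely the single point $0$ with multiplicity $m$. Thus we may choose $B(z)=\lambda z^m$ with $|\lambda|=1$.

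Now $U=\pi_a^{-1}(G)$ is a logarithmic-type cover, and $\pi_a:U\to G\setminus\{0\}$ is a universal covering. Likewise $E(z)=e^{2iz}$ maps $\mathbb{H}$ onto $\D\setminus\{0\}$ as a universal covering. We therefore lift $\psi$ to a conformal map $\Phi:\mathbb{H}\to U$ satisfying $\pi_a\circ\Phi=\psi\circ E$. Then $g_f=\Phi^{-1}\circ f\circ\Phi$ is a holomorphic self-map of $\mathbb{H}$ with $E\circ g_f=g_h\circ E$.

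Writing $g_h(E(z))=\lambda e^{2imz}S(e^{2iz})$ and taking logarithms gives
\[
2i\,g_f(z)=i\arg\lambda+2imz-\sum_j c_j\frac{e^{i\theta_j}+e^{2iz}}{e^{i\theta_j}-e^{2iz}}+2\pi i n.
\]
Dividing by $2i$ yields \eqref{assf} with $\sigma$ real. The constant of integration is real because $g_f$ and the other terms are determined up to additive constants in $\pi\Z$ and $\arg\lambda\in\R$.

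\emph{Remaining checks.} We still need that $g_f$ is inner in the sense of $\mathbb{H}$ (real boundary values almost everywhere) and that $U$ is indeed a Fatou component. The latter again comes from \cite{Ber95}. Inner-ness follows from that of $g_h$, since $E$ maps $\R$ to $\partial\D$.

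The main obstacle is the first stage: extracting the precise form of $g_h$, especially identifying the zero set of $g_h$ with $p$ points and the singular measure with finitely many atoms. This rests on the content of Theorem~\ref{ers}. If that theorem only gives finiteness of the singularities, we would argue that $g_h$ extends analytically across the remaining arcs of $\partial\D$. A singular measure of an inner function supported on $q$ points must then be a combination of Dirac masses.
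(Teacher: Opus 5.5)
Your proof is correct in substance and follows the paper's route. You get the form of $g_h$ from Theorem~\ref{ers}, normalise $\psi(0)=0$ to obtain $B(z)=\lambda z^m$, use the universal covering $z\mapsto e^{2iz}$ of $\D\setminus\{0\}$, and take a logarithm. The $\pi\Z$-ambiguity after dividing by $2i$ is an integer-valued continuous function on the connected set $\mathbb{H}$, hence constant; say this explicitly rather than calling it a ``constant of integration''.

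In case (i), your $\Phi$, obtained by lifting $\psi\circ E$ through $\pi_a|_U$, is exactly the paper's $\varphi\circ M$. You should add why $\pi_a|_U\colon U\to G\setminus\{0\}$ is a \emph{universal} covering: $U$ is simply connected, because $f(U)\subset U$ makes it a forward-invariant Fatou component of $f$ and Baker's theorem applies.

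In case (ii) your route is shorter than the paper's. With the Riemann maps $(\pi_a|_{U_k})^{-1}\circ\psi$ and $(\pi_a|_{U_{k'}})^{-1}\circ\psi$, the inner function associated to $f\colon U_k\to U_{k'}$ is exactly $g_h$, for every lift $f$ and every $k$. The paper instead passes to a translated lift $\hat f$ with $\hat f(U_k)\subset U_k$ (Remark~\ref{scu}(2)). It shows $g_h$ is dynamically associated to $\hat f|_{U_k}$ and then transfers this to $f|_{U_k}$ via Proposition~\ref{wandinn}. That detour records an extra dynamical statement about $\hat f$, but your direct computation is all the theorem requires.

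The one assertion you do not prove is ``$b=0$, $p=m$'' in (i). You simply re-choose $b=0$. That is legitimate for applying Theorem~\ref{ers}, since $0$ has exactly $m$ preimages in $G$. But the statement says the point $b$ of hypothesis (b) must be $0$, as the paper's proof argues. For that you need the fact the paper invokes: since $h|_G$ has infinite degree, at most one point of $G$ has finitely many preimages in $G$.

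You can also read this off from your own formula. Near an atom $e^{i\theta_j}$, put $u=\frac{e^{i\theta_j}+z}{e^{i\theta_j}-z}$; then $g_h=F\cdot e^{-c_ju}$ with $F$ analytic near $e^{i\theta_j}$ and $|F(e^{i\theta_j})|=1$. Apply Rouch\'e on boxes $\{A_1<\mathrm{Re}\,u<A_2\}$ of height $2\pi/c_j$ with $\mathrm{Im}\,u\to\infty$. This shows every $w\in\D\setminus\{0\}$ is taken infinitely often by $g_h$. So $\psi(0)=0$ is the only point of $G$ with finitely many preimages, which forces $b=0$ and $p=m$.
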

	\begin{rmk} 
		Let $h\in\mathcal{L}$ be such that $h=h_n\circ\dots\circ h_1,$ where $n\in\N$ and $h_1,\dots,\,h_n$ are transcendental entire functions that satisfy the hypotheses of \cite[Theorem 6.1]{ERS20} for a Fatou component $G.$ Assume, also, that $G$ is a Fatou component of $h.$ Then, $g_h=g_{h_n}\circ\dots\circ g_{h_1}$ is a dynamically associated inner function to $h|_G,$ where $g_{h_j}$ is an inner function dynamically associated to $h_j|_G,$ for $j\in\{1,\dots,\,n\},$ and is conjugate to the inner function that is calculated explicitly in \cite[Theorem 6.1]{ERS20}. If $f\in\mathcal{L}_{a,\,h},\,a\in\C^*,$ and $U$ is a component of $\pi_a^{-1}(G),$ the proof of Theorem \ref{const} can be applied to $h,$ even if $h$ does not satisfy the hypotheses of this theorem (for instance, $g_h$ can have infinitely many singularities on $\partial\D$). Examples like this can arise if we compose a function $h\in\mathcal{L}$ with itself finitely many times.
	\end{rmk}
	Theorem \ref{const}(i) has a corollary that can be applied to a large class of functions. This includes the Fatou functions (\cite[Theorem 1.9]{ERS20}), and functions of the form $f(z)=mz+\lambda+e^{-z},$ where $m\in\N,\,m\ge2,$ and $\lambda\in\C$ depends on $m$ (\cite[p. 12]{RRS10}). In particular, concerning the Fatou functions, the proof of \cite[Theorem 1.9]{ERS20}, apart from calculating the constant, can be deduced from the following corollary, which is proved in Section \ref{inf}.
	\begin{cor} \label{infpoly}
		Suppose $Q$ is a polynomial of degree $\deg Q\ge1.$ Suppose also that the function $$h(w)=w^me^{Q(w)},$$	where $m\in\N\cup\{0\},$ has an (unbounded) forward-invariant Fatou component $G$ containing the origin, on which $h$ has infinite degree. If $f\in\mathcal{L}_{a,\,h},\,a\in\C^*,$ then $U:=\pi_a^{-1}(G)$ is connected and $f|_U$ has a dynamically associated inner function of the form \eqref{assf}, where $q\le\deg Q.$
	\end{cor}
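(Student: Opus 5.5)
The plan is to deduce the corollary from Theorem \ref{const}(i) by checking hypotheses (a)--(c) for $h(w)=w^me^{Q(w)}$ and showing that case (i) occurs. Hypothesis (a) is assumed. Since $0\in G$ by assumption, case (ii) is excluded, so once (b) and (c) hold, Theorem \ref{const} gives case (i): $U=\pi_a^{-1}(G)$ is connected and $f|_U$ has a dynamically associated inner function of the form \eqref{assf} with $q$ singularities. It remains to verify (b) and (c) and to prove $q\le\deg Q$.

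For (b), take $b=0\in G$. Because $e^{Q}$ never vanishes, $h^{-1}(\{0\})=\{0\}$ with multiplicity $m$. Hence $h^{-1}(\{0\})\cap G$ has exactly $p=m$ points, counting multiplicity, which matches the conclusion $b=0$, $p=m$ in case (i).

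For (c), I would invoke \cite[Theorem 6.1]{ERS20} (Theorem \ref{ers}), or the argument behind \cite[Corollary 6.2]{ERS20} for functions $Pe^{Q}$. The aim is to show that $g_h$ has finitely many singularities on $\partial\D$, at most $\deg Q$ of them. The heuristic is as follows. Singularities of $g_h$ on $\partial\D$ correspond to "directions" of access to $\infty$ in $G$ along which $h$ has infinite degree, that is, along which $h$ accumulates at the boundary of $G$ rather than being locally finite-to-one. Since $Q$ is a polynomial, $e^{Q}$ tends to $0$ or $\infty$ except near the $2\deg Q$ critical rays $\arg w=(\pi/2+k\pi-\arg a_d)/d$, where $a_d$ is the leading coefficient and $d=\deg Q$. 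In the sectors where $\operatorname{Re}Q\to-\infty$, the map $h$ tends to $0\in G$. The infinite-degree behaviour therefore lives in the $d$ "tracts" where $\operatorname{Re}Q\to+\infty$, together with the boundaries between them. Each such tract should contribute at most one singular point of $g_h$. The singular support of the singular inner factor then consists of at most $d$ points. Each point mass $c_j$ corresponds to a tract, and $c_j>0$.

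The main obstacle is making this count rigorous, because the singular values of $h$ need not be compactly contained in $G$. I would try first to apply the structure of \cite[Theorem 6.1]{ERS20} directly: that result already expresses $g_h$ as a finite Blaschke product times $\exp\bigl(-\sum c_j\frac{e^{i\theta_j}+z}{e^{i\theta_j}-z}\bigr)$, with the number of atoms bounded by the number of tracts. The tract count for $w^me^{Q(w)}$ is $\deg Q$, since the factor $w^m$ does not change the asymptotic sectors. Transferring through the lift in case (i) keeps $q$ unchanged, which gives $q\le\deg Q$.
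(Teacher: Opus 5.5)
\textbf{There is a genuine gap: hypothesis (c) and the bound $q\le\deg Q$ are never proved.} Your reduction to Theorem \ref{const}(i) is fine, and so is your check of (b) with $b=0$, $p=m$. But (c) together with $q\le\deg Q$ is the real content of the corollary, and it is left unproved.

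\textbf{The route via Theorem \ref{ers} is circular.} \cite[Theorem 6.1]{ERS20} assumes that $g_h$ has a finite number $q\ge1$ of singularities, and only then produces the product formula. It contains no bound on $q$ in terms of tracts. Your pointer to ``the argument behind'' \cite[Corollary 6.2]{ERS20} points the right way, but you never supply that argument.

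\textbf{Your heuristic picks the wrong tracts.} The atoms $e^{i\theta_j}$ of the singular factor are points where $g_h\to0=\psi^{-1}(0)$ radially. So they correspond to curves in $G$ tending to $\infty$ along which $h\to0$. These are the logarithmic tracts over $0$, in the sectors where $\operatorname{Re}Q\to-\infty$. There $h$ is a universal covering of a small punctured disc about $0$, so it is as far from finite-to-one as possible. They are not the sectors where $\operatorname{Re}Q\to+\infty$, in which $h\to\infty$.

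\textbf{You also misdiagnose the obstacle: $S(h)$ is finite.} Indeed, $h'(w)=w^{m-1}e^{Q(w)}(m+wQ'(w))$ has finitely many zeros, and by \cite[Proposition 2]{K89} the only finite asymptotic value is $0$. It is the lift $f$ whose $S(f)\cap U$ is non-compact, because its critical points repeat with period $\frac{2\pi i}{a}$. That is exactly why one works downstairs with $h$.

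\textbf{The missing ingredient is \cite[Theorem 1.2]{ERS20}, made available by finiteness of $S(h)$.} That theorem relates the singularities of $g_h$ to the asymptotic paths in $G$ (up to homotopy) along which $h\to0$. It lets you bound $q$ by the number of such paths.

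\textbf{The count of those paths goes as follows.}
\begin{itemize}
\item $h$ has order $\deg Q$.
\item Since $h$ has finitely many critical points, every singularity of $h^{-1}$ over $0$ is direct \cite[Theorem 1]{BE95}.
\item The Denjoy--Carleman--Ahlfors theorem allows at most $2\deg Q$ direct singularities.
\item There are at least as many singularities over $\infty$ as over finite values, so at most $\deg Q$ of them lie over $0$.
\end{itemize}
Infinite degree rules out a finite Blaschke product, so $q\ge1$. This gives (c) with $1\le q\le\deg Q$. Theorem \ref{const}(i) then yields the statement with the same $q$.
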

	In forthcoming work, we investigate an example of a function $h\in\mathcal{L},$ satisfying hypotheses (a)-(c) of Theorem \ref{const}, for which $0\notin G.$ When a function has these properties, then its lifts will have infinite degree in the Fatou components $\{U_k\}_{k\in\Z}$ (see Proposition \ref{degreesn0}). By choosing appropriate lifts, we obtain functions for which $\{U_k\}_{k\in\Z}$ contains a sequence of wandering domains. Such functions will also provide concrete examples of entire functions with sequences of wandering domains in which the functions have infinite degree. To the best of our knowledge, there are currently no such examples in the literature.
	
	We use the idea of lifting transcendental entire functions to prove the following two results for the finite-degree case.
	\begin{thm} \label{uds}
		Let $h\in\mathcal{L},\,f\in\mathcal{L}_{a,\,h},\,a\in\C^*,$ and suppose that $h$ has a forward-invariant Fatou component $G$ on which $h$ has finite degree $d.$ Also, let $g_h$ be an inner function dynamically associated to $h|_G.$ Then, exactly one of the following holds.
		\begin{enumerate}[(i), leftmargin=*, itemsep=0pt, topsep=0pt]
			\item $0\in G$ and $U:=\pi_a^{-1}(G)$ is connected. In this case, $f|_U$ is univalent and hence, has a dynamically associated inner function $g_f$ which is a conformal automorphism of the unit disc.
			\item $0\notin G$ and $\{U_k\}_{k\in\Z}$ are the components of $\pi_a^{-1}(G).$ Then $g_h$ is associated to $f|_{U_k},$ for all $k\in\Z,$ and it is a finite Blaschke product of degree $d.$ 
		\end{enumerate}
	\end{thm}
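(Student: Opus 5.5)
Since $f\in\mathcal{L}_{a,\,h}$, we have $h\circ\pi_a=\pi_a\circ f$, so the Fatou set of $f$ is $\pi_a^{-1}(F(h))$ by the standard lifting results of \cite{Ber95}. Thus the components of $\pi_a^{-1}(G)$ are Fatou components of $f$, and $f$ maps each of them into $\pi_a^{-1}(G)$. The dichotomy comes from the topology of $\pi_a\colon\C\to\C^*$, which is a universal covering. I will split into cases according to whether $0\in G$.

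\textbf{Case (i): $0\in G$.} Put $G^*=G\setminus\{0\}$. Then $U=\pi_a^{-1}(G)=\pi_a^{-1}(G^*)$ and $\pi_a|_U\colon U\to G^*$ is a covering. A loop around $0$ in $G^*$ generates $\pi_1(G^*)\cong\Z$ and does not lift to a closed curve, so $U$ is connected and $U$ is the universal cover of $G^*$. In particular $U$ is simply connected and $f(U)\subset U$.

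To get univalence, note first that $h$ has finite degree $d$ on $G$. Also $h^{-1}(0)\cap G$ consists of $0$ alone (with multiplicity $m$), since $h(w)=0$ forces $w=0$. So $h|_{G^*}\colon G^*\to G^*$ is a proper map of degree $d$. It therefore induces multiplication by some integer $k$ on $\pi_1(G^*)\cong\Z$, and this $k$ is the winding number of $h(\gamma)$ about $0$ for a small loop $\gamma$ around $0$. Hence $k=m$.

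On the other hand, the lift $f|_U$ is proper onto $U$ of degree $\deg(h|_{G^*})/m$. Counting via the covering: $f$ is the lift of $h|_{G^*}$, whose degree equals the index of the image subgroup times the lift's degree, so $\deg f|_U=d/m$.

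I then claim $d=m$. This is the main obstacle. Since $G$ is simply connected, $h|_G$ is a proper degree-$d$ map, and by Riemann--Hurwitz $h|_G$ has $d-1$ critical points in $G$. The critical point at $0$ accounts for $m-1$ of them, so I need to exclude any other critical points. Here I would use the lift: a critical point $w_0\ne0$ of $h$ in $G$ lifts to critical points of $f$ at all preimages $z_0+2\pi i k/a$. These are infinitely many, which contradicts finite degree of $f|_U$ once I know $f|_U$ is proper of finite degree. That properness follows from finiteness of $\deg h|_{G^*}$ together with the covering correspondence.

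A cleaner route is to compute $\deg f|_U=[\pi_1(G^*):h_*\pi_1(G^*)]^{-1}\cdot d$. Each fibre of $f|_U$ over a point of $U$ corresponds to a fibre of $h$ over $G^*$ modulo deck action, giving $\deg f|_U=d/m$. Since critical points of $f$ project to critical points of $h$ and come in infinite deck orbits, $f|_U$ has no critical points. A proper map of finite degree between simply connected domains with no critical points is a covering of a simply connected domain, hence univalent. Consequently $\psi^{-1}\circ f\circ\psi$ is a conformal automorphism of $\D$.

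\textbf{Case (ii): $0\notin G$.} Here $G$ is a simply connected subset of $\C^*$, so $\pi_a^{-1}(G)$ splits into components $U_k$, and each $\pi_a|_{U_k}\colon U_k\to G$ is a conformal isomorphism. Since $f(U_k)$ is connected and lies in $\pi_a^{-1}(G)$, we have $f(U_k)\subset U_{\ell(k)}$ for some $\ell(k)$.

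Take $\varphi=(\pi_a|_{U_k})^{-1}\circ\psi_G$ and $\psi=(\pi_a|_{U_{\ell(k)}})^{-1}\circ\psi_G$, where $\psi_G$ is the Riemann map used to define $g_h$. Then
\[
\psi^{-1}\circ f\circ\varphi=\psi_G^{-1}\circ\pi_a\circ f\circ(\pi_a|_{U_k})^{-1}\circ\psi_G=\psi_G^{-1}\circ h\circ\psi_G=g_h .
\]
So $g_h$ is associated to $f|_{U_k}$. Finally, $g_h$ has degree $d$ because $h|_G$ has degree $d$, so by \cite[Proposition 1.1]{ERS20} it is a finite Blaschke product of degree $d$.

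The two cases are mutually exclusive by definition, which gives the statement.
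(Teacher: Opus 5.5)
Your proof is correct in outline, but it takes a different route from the paper in both cases.

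\textbf{Case (ii).} Your argument is shorter than the paper's. You pick the two Riemann maps $(\pi_a|_{U_k})^{-1}\circ\psi_G$ and $(\pi_a|_{U_{\ell(k)}})^{-1}\circ\psi_G$ separately for the source and target components, and $g_h$ drops out in one line. The paper instead first passes to a translated lift $\hat f$ with $\hat f(U_k)\subset U_k$ (Remark \ref{scu}(2)). It shows that $g_h$ is dynamically associated to $\hat f|_{U_k}$, and then transfers this to $f|_{U_k}$ via Proposition \ref{wandinn} and Remark \ref{wdpli}(2). That detour is not needed for this theorem. What it buys is generality: Proposition \ref{wandinn} applies to any entire function with $f(z+c)=f(z)+mc$, not only to lifts.

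\textbf{Case (i).} Your final step is exactly the paper's Lemma \ref{fdegf}: critical points of $f$ in $U$ come in infinite orbits under $z\mapsto z+\frac{2\pi i}{a}$, so there are none, and a proper local homeomorphism onto the simply connected $U$ is univalent. The difference lies in how you obtain connectedness of $U$ and finiteness of $\deg f|_U$.
\begin{itemize}
\item You use the fundamental group of $G^*=G\setminus\{0\}$, with $h_*$ acting as multiplication by $m$, and get $\deg f|_U=d/m$.
\item The paper uses curve lifting (Proposition \ref{liftg}) and a direct count of preimages (Proposition \ref{degrees0}).
\end{itemize}

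\textbf{Two points need patching.}

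First, you silently assume $m\ge 1$. If $m=0$, then $h^{-1}(0)\cap G=\emptyset$. Moreover $h(0)\in G^*$, so $h|_{G^*}$ is not proper, and $d/m$ is meaningless. This case is vacuous, but you must exclude it. For example: $h|_G$ has finite degree, hence is proper and therefore surjective onto $G\ni 0$, whereas $h$ omits $0$ when $m=0$. The paper handles this in Lemma \ref{fdegf} via periodicity of $f$.

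Second, you assert properness of $f|_U$ rather than prove it. A clean justification is as follows.
\begin{itemize}
\item If $f(y_1)=f(y_2)$ with $y_2=y_1+\frac{2l\pi i}{a}$, then \eqref{proli} gives $ml=0$, so $l=0$ since $m\ge1$.
\item Hence $\pi_a$ is injective on each fibre $f^{-1}(z)\cap U$, and it maps that fibre into $h^{-1}(\pi_a(z))\cap G$.
\item So every point of $U$ has at most $d$ preimages in $U$, and \cite[Proposition 2.8]{BerFR15} gives properness.
\end{itemize}
With these additions your argument is complete.
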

	A corollary to Theorem \ref{uds} applies to a class of functions that contains examples which have already been studied, such as $f(z)=z+e^{-z}$ (\cite[Section 5]{BD99}) and the function $f(z)=z-1+e^{-z},$ which was first studied by Herman in \cite{herman} (see, also, \cite[Example 5.1]{B84}). In Example \ref{tex}, we use Corollary \ref{finpoly} to calculate an inner function dynamically associated to Baker domains of a function which was examined in \cite[Example 4]{FH06}, as an example of an entire map with a sequence of doubly-parabolic Baker domains of degree 3.
	\begin{cor} \label{finpoly}
		Suppose $Q$ is a polynomial of degree $\deg Q\ge1.$ Suppose also that the function $$h(w)=w^me^{Q(w)},$$ where $m\in\N\cup\{0\},$ has a forward-invariant Fatou component $G,$ not containing 0, and $g_h$ is a dynamically associated inner function to $h|_G.$ Then $g_h$ is a finite Blaschke product of degree $d=m+\deg Q,$ and it is associated to $f|_{U_k},$ for each $k\in\Z,$ where $f\in\mathcal{L}_{a,\,h},\,a\in\C^*,$ and $\{U_k\}_{k\in\Z}$ are the components of $\pi_a^{-1}(G).$
	\end{cor}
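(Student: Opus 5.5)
Corollary \ref{finpoly} will follow from Theorem \ref{uds}(ii) once we show that $h$ has finite degree $d=m+\deg Q$ on $G$. Since $0\notin G$, alternative (i) of Theorem \ref{uds} cannot occur. So alternative (ii) holds: the components $\{U_k\}_{k\in\Z}$ of $\pi_a^{-1}(G)$ all have $g_h$ as an associated inner function. The only remaining work is to prove that $h|_G$ has finite degree and to compute that degree. Then Theorem \ref{uds}(ii) tells us $g_h$ is a finite Blaschke product of that degree.

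For the degree, I would first compute the derivative:
$$h'(w)=w^{m-1}e^{Q(w)}\bigl(m+wQ'(w)\bigr).$$
Hence the only possible critical points in $\C^*$ are the zeros of the polynomial $m+wQ'(w)$, which has degree $\deg Q$. The map $h$ has exactly two finite asymptotic values or omitted values, namely $0$ and possibly none other. Indeed, $h:\C^*\to\C^*$ is a map whose singular values are finitely many: the critical values, together with $0$ and $\infty$. I would use this to argue that $G$, being a forward-invariant Fatou component not containing $0$, cannot have infinite degree. The plan is to view $h|_{\C^*}$ through the lift. Take $f(z)=mz+\tfrac1a Q(e^{az})+c$ with $c$ chosen so that $f\in\mathcal L_{a,h}$, and let $U_0$ be a component of $\pi_a^{-1}(G)$. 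Since $0\notin G$, $\pi_a$ maps $U_0$ conformally onto $G$ (its restriction is univalent because $G$ is simply connected in $\C^*$). As a result, $\deg(h|_G)=\deg(f|_{U_0}\colon U_0\to U_{k})$ for the appropriate $k$.

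The main obstacle is to show this degree equals $m+\deg Q$ rather than something smaller, or infinite. The approach I would try first is the count of preimages. Fix a generic $b\in G$ and solve $h(w)=b$ near the asymptotic direction. Asymptotically, $w^me^{Q(w)}=b$ has solutions along the $\deg Q$ directions where $\operatorname{Re}Q$ stays bounded. Each such solution family is a logarithmic tract, so preimages of $b$ accumulate only at $\infty$, spread over finitely many such channels. A more robust route is Riemann–Hurwitz applied to the proper map $h|_G\colon G\to G$. The component $G$ is simply connected, since it is forward-invariant and does not contain $0$, which sits in a separate component. So the degree equals $1$ plus the number of critical points of $h$ in $G$. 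Forward-invariant Fatou components of maps in this class contain critical points or asymptotic values. I expect the intended argument instead compares with $f$: the components of $f^{-1}(U_k)$ in the lift correspond to tracts of $e^{Q}$ combined with $w^m$. Counting the total multiplicity of $0$ and the $\deg Q$ tracts together would give $d=m+\deg Q$. I would verify finiteness first, via the finitely many tracts of $Q$ in a half-plane direction, and then match the count using the argument principle on a large Jordan curve in $G$.
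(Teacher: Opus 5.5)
Your reduction matches the paper's. Since $0\notin G$, Theorem \ref{uds}(ii) delivers everything once you show that $h|_G$ has finite degree $d$ and compute $d$. But neither step is actually carried out.

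For finiteness, the input you need, and the one the paper uses, is this: by \cite[Proposition 2]{K89} the only finite asymptotic value of $h$ is $0$, and $0\notin G$. Moreover, $h$ has only finitely many critical points. Hence $h|_G$ is proper by \cite[Proposition 2.8]{BerFR15}. Your discussion of tracts and of $\pi_a|_{U_0}$ does not supply this.

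The serious gap is the value of $d$. You correctly note that $d=1+N$, where $N$ is the number of critical points of $h$ in $G$ counted with multiplicity. From $h'(w)=w^{m-1}e^{Q(w)}(m+wQ'(w))$, the total number of critical points of $h$ in $\C$ is $m+\deg Q-1$. So $d=m+\deg Q$ is equivalent to \emph{every} critical point of $h$ lying in $G$. Nothing in your proposal shows this. The tract/argument-principle heuristic cannot show it either, because the order-$m$ zero at $0\notin G$ and the $\deg Q$ tracts are global features of $h$, not counts of preimages inside $G$.

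The paper takes exactly the Riemann--Hurwitz route you abandoned and sums local degrees over \emph{all} critical points. It therefore tacitly makes the same assumption, without justification.

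The assumption in fact fails in general. If $m\ge2$, then $0$ is itself a critical point, of multiplicity $m-1$, lying outside $G$. For a concrete example, take $h(w)=w^2e^{2-2w}$, so $h'(w)=2w(1-w)e^{2-2w}$ and $1$ is a super-attracting fixed point. Its immediate basin $G$ is forward-invariant and cannot contain the other super-attracting fixed point $0$. By the properness argument above, $h|_G$ is proper with a single simple critical point, so $d=2\neq 3=m+\deg Q$.

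What can honestly be proved is $d=1+N\le m+\deg Q$, with equality exactly when all critical points of $h$ lie in $G$. This holds in Example \ref{tex}, where the only critical point, $1$, has multiplicity $2$ and lies in $G$. That extra hypothesis is the missing ingredient, and neither your argument nor the paper's can avoid it.
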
	
	In Theorem \ref{const}(ii), Theorem \ref{uds}(ii) and Corollary \ref{finpoly} we do not assume that the Fatou components $\{U_k\}_{k\in\Z}$ are forward-invariant. This is because there are cases where they are wandering domains or preimages of a forward-invariant Fatou component. For example, in \cite[Example 5.1]{B84}, Baker studied the function $f(z)=z-1+e^{-z},$ which is a function of Herman's type (see \cite{herman}), and a lift of the function $h(t)=te^{1-t}.$ He showed that it has infinitely many forward-invariant super-attracting basins $\{D_n\}_{n\in\Z},$ with $D_{n+1}=D_n+2\pi i,$ for all $n\in\Z,$ and then, he considered the function $g(z)=2\pi i+f(z),$ which satisfies $g(D_n)=D_{n+1},$ for all $n\in\Z,$ and thus is an example of a function with wandering domains. In this setting, we show the following result for the inner functions associated to such Fatou components.
	\begin{prop} \label{wandinn}
		Let $f$ be a transcendental entire function with the property $$f(z\pm c)=f(z)\pm mc, \text{ for all } z\in\C,$$ for some $m\in\N\cup\{0\}$ and $c\in\C\setminus\{0\}.$ Also, assume that $\{U_k\}_{k\in\Z}$ are simply connected Fatou components of $f$ such that
		\begin{enumerate}[(a), leftmargin=*, itemsep=0pt, topsep=0pt]
			\item $U_k+c=U_{k+1},$ for all $k\in\Z,$ and
			\item there exists $k_0\in\Z$ such that $f(U_0)\subset U_{k_0}.$
		\end{enumerate}   
		Let $\hat{f}(z)=f(z)+lc,$ for all $z\in\C,$ where $l\in\Z.$ Then:
		\begin{enumerate}[(i), leftmargin=*, itemsep=0pt, topsep=0pt]
			\item $F(\hat{f})=F(f).$ 
			\item For any $j,\,j'\in\Z,$ an inner function is associated to $f|_{U_j}$ if and only if it is associated to $\hat{f}|_{U_{j'}}.$ 
		\end{enumerate} 
	\end{prop}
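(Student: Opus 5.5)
The plan is to reduce everything to the translation $T(z)=z+c$ and its powers. From the functional equation, by induction, $f(z+kc)=f(z)+mkc$ for all $k\in\Z$. Hence $f$ commutes with translations up to translations. For part (i), the key identity is
\[
\hat f^{\,n}(z)=f^n(z)+l\,c\,(1+m+\dots+m^{n-1})=f^n(z)+l\,c\,s_n,\qquad s_n\in\Z .
\]
I will prove this by induction on $n$. The inductive step is $\hat f^{\,n+1}(z)=f(f^n(z)+lcs_n)+lc=f^{n+1}(z)+mlcs_n+lc$, so $s_{n+1}=ms_n+1$.

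Now $\{\hat f^{\,n}\}$ differs from $\{f^n\}$ by the constants $lcs_n$, which may be unbounded, so normality does not transfer directly. I will use the characterization of the Fatou set via normality in the sense of maps into the Riemann sphere. Take a neighbourhood of a point of $F(f)$ and a subsequence of $f^{n}$.

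\begin{itemize}
\item If the subsequence converges locally uniformly to a holomorphic limit, the translates $f^{n}+lcs_{n}$ either have a further subsequence converging (when $s_n$ is bounded along it) or tend to $\infty$ locally uniformly.
\item If $f^n\to\infty$ locally uniformly, I must check that $f^n+lcs_n$ does not fail to be normal.
\end{itemize}

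This second bullet is the main obstacle. A cleaner route is to avoid it via periodicity of the Julia set. The functional equation gives $f\circ T=T^m\circ f$, so $f^{-1}$ of a $T$-invariant set is $T$-invariant, and $J(f)$ is invariant under $T$. The standard argument for this is that $T^{-1}$ conjugates $f$ to itself up to a translation, i.e. $T^{-1}\circ f^n\circ T=f^n+(m^n-1)c$. Here I would instead use the characterization $J(f)=\overline{\{\text{repelling periodic points}\}}$ or, more robustly, that $J$ is the closure of backward orbits of a non-exceptional point. The approach is as follows.

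\begin{itemize}
\item First show that $J(f)+c=J(f)$. A point $z$ lies in $F(f)$ if and only if the family $\{f^n\}$ is normal near $z$. The family $\{f^n\circ T\}=\{f^n+(m^n-1)c\}$ consists of translates of $f^n$ by lattice vectors $kc$.
\item Then show that $\hat f^{\,n}$ and $f^n$ also differ by lattice translates.
\end{itemize}

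So both cases reduce to one lemma: \emph{if $k_n\in\Z$, then $\{f^n\}$ is normal near $z$ if and only if $\{f^n+k_nc\}$ is.} I would prove this lemma via Montel. If $\{f^n\}$ omits nothing useful, use instead that $z\in J(f)$ if and only if, for every neighbourhood $N$ of $z$, $\bigcup_n f^n(N)$ covers $\C$ minus at most one point (blowing-up property). Since $J(f)$ is $c$-periodic and $f^n(N)$ meets $J(f)$, one gets that $f^n(N)+k_nc$ also meets $J(f)$. The blowing-up property transfers up to translations by lattice vectors, using $T$-invariance of $J$ and density arguments. If this proves too delicate, I would fall back on Bergweiler's lifting/semiconjugacy approach, which the paper cites for such periodic situations.

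For part (ii), let $\varphi_j\colon\D\to U_j$ be Riemann maps, chosen so that $\varphi_j=T^{j}\circ\varphi_0$. By (a), $f(U_j)=f(U_0+jc)\subset U_{k_0}+mjc=U_{k_0+mj}$, and $\hat f(U_{j'})\subset U_{k_0+mj'+l}$. An inner function associated to $f|_{U_j}$ has the form $\psi^{-1}\circ f\circ\varphi$ for Riemann maps $\varphi$ of $U_j$ and $\psi$ of the target. Writing
\[
\hat f\circ T^{j'-j}=T^{\,l+m(j'-j)}\circ f
\]
converts any such expression for $f|_{U_j}$ into one for $\hat f|_{U_{j'}}$. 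One takes $\varphi'=T^{j'-j}\circ\varphi$ and $\psi'=T^{l+m(j'-j)}\circ\psi$, which are Riemann maps of $U_{j'}$ and of the component containing $\hat f(U_{j'})$, respectively; these are Fatou components by (i). The converse follows symmetrically.
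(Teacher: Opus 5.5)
Your part (ii) is correct and is essentially the paper's argument. The paper also uses $\hat f\circ T^{j'-j}=T^{\,l+m(j'-j)}\circ f$, together with the fact that translating a Riemann map of $U_\kappa$ by $(\lambda-\kappa)c$ gives a Riemann map of $U_\lambda$.

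The gap is in (i), which your proposal does not actually prove. You correctly isolate the difficulty: the constants $lcs_n$ may be unbounded, so the identity $\hat f^{\,n}=f^n+lcs_n$ alone does not handle subsequences with $f^{n_k}\to\infty$. Your caution is justified. The paper uses this identity in the form $\hat f^{\,n}(z)-\hat f^{\,n}(w)=f^n(z)-f^n(w)$ and transfers equicontinuity through it. That transfer is immediate only for Euclidean equicontinuity, which the iterates need not have on escaping components: on the real axis inside the Baker domain of Example \ref{ex1}, $f^n(x+\varepsilon)-f^n(x)\to\infty$ exponentially.

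However, none of your routes closes this case. The lemma you reduce to cannot follow from normal-family theory alone. On $\D$ the family $n^2c+ncz$ tends to $\infty$ uniformly, while $ncz$ is not normal at $0$. So the lemma needs dynamical input. The input you propose, $J(f)+c=J(f)$, is itself to be derived from the same lemma with $k_n=m^n$ (since $f^n\circ T=f^n+m^nc$), and this is circular. The standard characterizations of $J(f)$ do not help when $m\ge2$:
\begin{itemize}
\item if $f^k(p)=p$, then $f^{kj}(p+c)=p+m^{kj}c$, so $p+c$ is not periodic;
\item $f^{-n}(w)+c=f^{-n}(w+m^nc)$ only relates the backward orbits of $w$ and $w+m^nc$.
\end{itemize}
For $m\le1$ the periodic-point route does work.

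The missing ingredient is exactly $c$-invariance of the Julia sets. Suppose $J(f)+c=J(f)$ and $N\subset F(f)$ is a disc. Then $\hat f^{\,n}(N)=f^n(N)+lcs_n$ misses $J(f)+lcs_n=J(f)$ for all $n$, so $\{\hat f^{\,n}\}$ is normal on $N$ by Montel. The same argument applies with $f$ and $\hat f$ exchanged.

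The clean way to obtain this, and in fact (i) directly, is the fallback you mention; make it your main argument.
\begin{itemize}
\item Set $\pi(z)=e^{2\pi iz/c}$. The functional equation makes $\pi\circ f$ invariant under $z\mapsto z+c$, so $\pi\circ f=h\circ\pi$ for a holomorphic self-map $h$ of $\C^*$, which is transcendental since $f$ is.
\item Also $\pi\circ\hat f=\pi\circ f$, because $\pi(w+lc)=\pi(w)$. Hence $f$ and $\hat f$ are lifts of the same $h$.
\item Bergweiler's theorem for analytic self-maps of the punctured plane, applied after the linear change of variable $z\mapsto 2\pi iz/c$, then gives $F(f)=\pi^{-1}(F(h))=F(\hat f)$.
\end{itemize}
For $f$ as general as in the proposition, $h$ can have essential singularities at both $0$ and $\infty$. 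You therefore need this general form of the theorem, not just \eqref{fh} for $h\in\mathcal{L}$.
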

	Finally, in \cite[4. Examples]{FH06}, Fagella and Henriksen presented a variety of functions with different types of Baker domains, and with different degrees in these Baker domains. In particular, in \cite[Example 1]{FH06}, they studied an example of a univalent hyperbolic Baker domain, while \cite[Example 2]{FH06} is an example of a univalent simply-parabolic Baker domain. The natural question they raised is whether there exist hyperbolic or simply-parabolic Baker domains of finite degree greater than 1 (\cite[table p. 385]{FH06}). We show that such examples cannot occur when the Baker domain is the lift of a forward-invariant Fatou component $G$ of a function $h$ of the form \eqref{genf}, with $m\ge1$ and with $0\in G.$ Under the same assumptions, we also prove that in the case where $G$ is lifted to a doubly-parabolic Baker domain, the lifts of $h$ always have infinite degree in this Baker domain. When $0\notin G,$ however, the case of doubly-parabolic Baker domains is different, as Fagella and Henriksen have already provided examples of this type with finite-degree (see \cite[Example 3, Example 4]{FH06}). We summarise the above results in the following proposition, and in Section \ref{exampl} we calculate the dynamically associated inner functions to two functions with infinite degree in their hyperbolic Baker domains. Example \ref{ex1} is a function examined in \cite[p. 69]{RS99} and \cite[Section 4]{RRS10}, while Example \ref{ex2} was studied in \cite[Example 3.6]{barg}.
	\begin{prop} \label{genrem}
		Let $h$ be of the form \eqref{genf}, with $m\ge1,$ and $G$ be a forward-invariant Fatou component of $h,$ with $0\in G.$ Also, let $f\in\mathcal{L}_{a,\,h},\,a\in\C^*,$ and $U=\pi_a^{-1}(G).$ Then, exactly one of the following holds.
		\begin{enumerate}[(a), leftmargin=*, itemsep=0pt, topsep=0pt]
			\item $m=1$ and \textnormal{Re}$H(0)=0.$ In this case, $U$ is a simply-parabolic Baker domain and $f|_U$ is univalent.
			\item $m=1$ and \textnormal{Re}$H(0)<0.$ In that case, $U$ is a doubly-parabolic Baker domain and $f|_U$ has infinite degree.
			\item $m\ge2.$ Then $U$ is a hyperbolic Baker domain, and $f|_U$ is either univalent or it has infinite degree.
		\end{enumerate}
	\end{prop}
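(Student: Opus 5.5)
The plan is to translate the three cases into the local nature of the fixed point $0$ of $h$, and then lift that local normal form to $U$. Since $m\ge1$, we have $h(0)=0$. Moreover $h'(0)=e^{H(0)}$ if $m=1$ and $h'(0)=0$ if $m\ge2$. Because $0\in G\subset F(h)$, the fixed point $0$ is one of three kinds:
\begin{itemize}
\item a Siegel point, when $m=1$ and $|e^{H(0)}|=1$, i.e. $\textnormal{Re}\,H(0)=0$;
\item a geometrically attracting, non-superattracting point, when $m=1$ and $\textnormal{Re}\,H(0)<0$;
\item a superattracting point, when $m\ge2$.
\end{itemize}
The case $\textnormal{Re}\,H(0)>0$ is excluded because a repelling fixed point lies in $J(h)$. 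This already shows that exactly one of (a)--(c) holds.

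\textbf{Step 1: $U$ is a Baker domain.} By the lifting method \cite{Ber95}, $\pi_a^{-1}(F(h))=F(f)$. Next I show that $U=\pi_a^{-1}(G)$ is connected. The set $G\setminus\{0\}$ contains a small punctured disc, whose preimage is a half-plane $\{\textnormal{Re}(az)<c\}$. A small loop around $0$ lifts to a path joining $z$ to $z+2\pi i/a$, so every component of $\pi_a^{-1}(G)$ meets this half-plane, and hence $U$ is connected (as in Theorem \ref{uds}(i)). Since $h(G)\subset G$, the relation $h\circ\pi_a=\pi_a\circ f$ gives $f(U)\subset U$.

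In all three cases the orbits of $h$ in $G$ accumulate only at $0$, or stay on invariant curves around $0$ in the Siegel case. Here I use $\pi_a^{-1}(0)=\emptyset$: an orbit in $G\setminus\{0\}$ cannot reach $0$, since the only preimage of $0$ under $h$ is $0$ itself. It follows that $f^n\to\infty$ locally uniformly on $U$. In the attracting cases this is because $\textnormal{Re}(af^n)\to-\infty$. In the Siegel case the orbits lie on lifts of invariant curves, and there is no interior limit function because $f$ acts on $U$ as a translation. So $U$ is an invariant Baker domain.

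\textbf{Step 2: type of the Baker domain.} I will use the classification of Baker domains by the Riemann surface $U/\langle f\rangle$ (K\"onig / Fagella--Henriksen): hyperbolic gives an annulus of finite modulus, simply-parabolic gives a punctured disc, doubly-parabolic gives $\C^*$. Equivalently, one can use conjugacy of $f$ on an absorbing domain to $z\mapsto\lambda z$ on $\mathbb{H}$, $z\mapsto z\pm1$ on $\mathbb{H}$, or $z\mapsto z+1$ on $\C$, respectively. Let $T(z)=z+2\pi i/a$; the relation $f\circ T=T^m\circ f$ holds.
\begin{itemize}
\item If $m\ge2$, take a B\"ottcher coordinate near $0$ conjugating $h$ to $w\mapsto w^m$. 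Lifting it, $f$ is conjugate on a left half-plane to $z\mapsto mz$, which is hyperbolic.
\item If $m=1$ and $0<|h'(0)|<1$, then $f$ commutes with $T$. The Koenigs coordinate shows that $(G\setminus\{0\})/\langle h\rangle$ is a torus, which equals $U/\langle f,T\rangle$. Hence $U/\langle f\rangle$ is an infinite cyclic cover of a torus, namely $\C^*$, and $U$ is doubly-parabolic.
\item If $m=1$ and $\textnormal{Re}\,H(0)=0$, conjugate $h|_G$ by the Riemann map fixing $0$ to an irrational rotation of $\D$. Lifting through the exponential, $f|_U$ is conjugate to a translation of a half-plane parallel to its boundary. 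This is simply-parabolic, and in this case $f|_U$ is univalent.
\end{itemize}

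\textbf{Step 3: degree.} If $h|_G$ has finite degree, Theorem \ref{uds}(i) gives that $f|_U$ is univalent. If $h|_G$ has infinite degree, then $f|_U$ has infinite degree. To see this, let $w\in G\setminus\{0\}$ have infinitely many $h$-preimages in $G$. Each preimage has its $T$-orbit of lifts in $U$, and these are mapped by $f$ into the $T^m$-orbit of a fixed lift of $w$. For $m=1$ this gives infinitely many preimages of that lift; for $m\ge2$ one argues by pigeonhole over the $m$ residue classes.
\begin{itemize}
\item In case (c) this gives exactly the stated dichotomy.
\item In case (a), $h|_G$ is conjugate to a rotation, hence univalent.
\item In case (b), finite degree is impossible. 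If $h|_G$ were proper, then $0$ is its only preimage of $0$ in $G$ and $m=1$, so $h|_G$ would have degree $1$ and be a conformal automorphism of $G$ fixing $0$. This contradicts $|h'(0)|<1$ by the Schwarz lemma. Hence $h|_G$, and therefore $f|_U$, has infinite degree.
\end{itemize}

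\textbf{Main obstacle.} I expect the hardest step to be the rigorous identification of the Baker domain type in Step 2, especially the doubly-parabolic case. There I will first try the quotient-surface argument via $U/\langle f,T\rangle$ rather than explicit asymptotics of $f$.
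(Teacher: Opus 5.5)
Your proposal is correct. Its overall structure matches the paper's: classify the fixed point $0$ of $h$, rule out $\textnormal{Re}\,H(0)>0$, lift to an invariant Baker domain, and transfer the degree via Proposition \ref{degrees0} or Theorem \ref{uds}(i). You differ in how the two substantive claims are justified.

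\textbf{Type of the Baker domain.} The paper just says one can argue as in \cite[Lemma 3.3]{barg} to get $f|_U\sim\textnormal{id}_{\mathbb{H}}\pm1$, $\textnormal{id}_{\C}+1$ or $\lambda\cdot\textnormal{id}_{\mathbb{H}}$. This keeps everything in the inner-function language of the paper, and in case (a) it directly gives a dynamically associated inner function. You instead lift the Siegel linearisation, the Koenigs coordinate and the B\"ottcher coordinate through $\pi_a$, and read off the type from the K\"onig/Fagella--Henriksen models. That is more self-contained, and it identifies $\lambda=m$ in case (c).

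\textbf{Infinite degree in case (b).} The paper uses the external fact that doubly-parabolic Baker domains are never univalent, and then applies Proposition \ref{degrees0}. Your argument is more elementary and avoids that fact: finite degree would force $\deg h|_G=m=1$, so $h|_G$ would be an automorphism of $G$ fixing $0$, which the Schwarz lemma forbids since $|h'(0)|<1$. Similarly, your pigeonhole over residue classes mod $T^m$ proves directly what the paper obtains by contradiction with univalence inside Proposition \ref{degrees0}.

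Two spots need more care when you write this up.
\begin{enumerate}
\item In case (b), $h|_G$ has infinite degree and may have critical points. So the ``torus'' $(G\setminus\{0\})/\langle h\rangle$ must be a grand-orbit quotient computed on a linearising absorbing disc, where $h$ and its lift are univalent. It is cleaner to lift the Koenigs map directly. This gives $\phi\circ f=\phi+\tau$ with $\tau=(H(0)+2\pi i k)/a$, which is $\R$-linearly independent of $2\pi i/a$ because $\textnormal{Re}\,H(0)<0$; hence $\phi(U)=\C$.
\item In case (c), the lifted B\"ottcher map conjugates $f$ to $z\mapsto mz+2\pi i k/a$ on $\{\textnormal{Re}(az)<\log r\}$, which is a left half-plane only when $a>0$. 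You therefore need to translate to the fixed point of this affine map. Then check that the union of backward images of the absorbing region is exactly a half-plane with that fixed point on its boundary.
\end{enumerate}
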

	\textbf{Structure of the paper.} In Section \ref{prel}, we review some preliminary concepts, such as inner functions and the lifting method, we establish basic results concerning the components which a Fatou component is lifted to, and we also prove Proposition \ref{genrem}. In Section \ref{wdinn} we prove Proposition \ref{wandinn}, while Section \ref{inf} is devoted to the infinite-degree case, where we prove Theorem \ref{const} and Corollary \ref{infpoly}. In Section \ref{fin} we present the proofs of Theorem \ref{uds} and Corollary \ref{finpoly}, which concern the finite-degree case. Finally, in Section \ref{exampl} we discuss two examples of the infinite-degree case, in which Theorem \ref{const}(i) can be applied, as well as an example of the finite-degree case.
	
	\textbf{Acknowledgments.} I am deeply grateful to my supervisor, Vasiliki Evdoridou, for her ideas, guidance and support, and to my second supervisor, Gwyneth Stallard, for her valuable feedback and careful reading of this paper. Also, I am much obliged to Anna Jové for useful discussions in Barcelona and for her insightful remarks on the paper, and to David Martí-Pete for his interesting suggestions and the picture of Fatou's function. Finally, I wish to thank Charalampos Betsakos for providing the software used to generate the Julia set pictures in Section \ref{exampl}.
	\section{Preliminaries} \label{prel}
	\subsection{Inner functions} \label{if}
	As mentioned in the introduction, inner functions are an important tool when working with simply connected Fatou components. Here, we include some basic examples of inner functions and briefly describe the behaviour of inner functions near the boundary of the unit disc.
	
	The simplest examples of inner functions are \emph{Blaschke products}, that is, functions of the form $$B(z)=e^{i\theta}\prod_{n=1}^{d}\frac{|a_n|}{a_n}\cdot\frac{a_n-z}{1-\bar{a}_nz},\,z\in\D,$$ where $\theta\in\mathbb{R},\,d\in\mathbb{N}\cup\{+\infty\}$ and $\{a_n\}_{n=1}^d\subset\mathbb{D}$ is such that $\sum_{n=1}^{d}(1-|a_n|)<+\infty.$ When $a_n=0,$ we interpret the corresponding term in the product as $z.$ If $d\in\mathbb{N},$ then $B$ is called a \emph{finite Blaschke product} of degree $d.$ We also allow $d=0;$ in this case, $B(z)=e^{i\theta}$ is a constant inner function, which we regard as a finite Blaschke product of degree 0. If $d=+\infty,$ then $B$ is called an \emph{infinite Blaschke product}. There are non-constant inner functions that do not have zeros, the so-called \emph{singular inner functions}. One of the simplest singular inner functions is $$S(z)=\exp\frac{z+1}{z-1},\,z\in\mathbb{D}.$$ 
	\begin{rmk}
		We often use inner functions of the upper half-plane, $\mathbb{H},$ because they are sometimes easier to work with. A map $g:\mathbb{H}\to\mathbb{H}$ is called an inner function of the upper half-plane if the map $\tilde{g}=M^{-1}\circ g\circ M:\D\to\D,$ where $M:\D\to\mathbb{H}$ is a conformal homeomorphism from the unit disc onto the upper half-plane, is an inner function in the usual sense.
	\end{rmk}	
	Inner functions can have a rather irregular boundary behaviour. 
	\begin{defn}
		A point $\zeta\in\partial\mathbb{D}$ is called a \emph{singularity} of an inner function $g$ if $g$ cannot be extended holomorphically to any neighbourhood of $\zeta$ in $\mathbb{C}.$ 
	\end{defn}
	\subsection{The lifting method}
	In \cite{Ber95}, Bergweiler extensively studied a method to construct examples of transcendental entire functions, which at the same time provides the necessary tools to study the dynamics of the functions constructed with this method. We will describe this process and we will often refer to it as ‘the lifting method’.
	
	Recall that $h\in\mathcal{L}$ if and only if it has the form \eqref{genf}.  
	\begin{obs} \label{zeroav}
		If $h$ is of the form \eqref{genf}, then 0 is always a finite asymptotic value of $h.$ Indeed, $h$ takes the value 0 $m$ times (counting multiplicity), so Iversen's Theorem (see \cite{ivers}, \cite[Section 5, p. 371]{BE95}) implies that 0 is an asymptotic value of $h.$ 
	\end{obs}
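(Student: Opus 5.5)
The statement to prove is Observation~\ref{zeroav}: $0$ is a finite asymptotic value of every $h$ of the form \eqref{genf}. The plan is to reduce it to Iversen's theorem on omitted values.

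First I check that $h$ omits the value $0$ on $\C^*$, and takes it only finitely often in $\C$. Since $h(w)=w^me^{H(w)}$ and $e^{H(w)}\neq0$ for every $w$, the only possible zero of $h$ is $w=0$. If $m=0$, then $0$ is an omitted value of $h$. If $m\ge1$, then $w=0$ is a zero of order exactly $m$, so $h$ takes the value $0$ precisely $m$ times, counting multiplicity. In both cases $0$ has finitely many preimages.

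Next I confirm that $h$ is transcendental. This is needed because Iversen's theorem concerns transcendental entire functions; for polynomials the conclusion fails. Since $H$ is non-constant, $e^{H}$ is transcendental. Then $w^me^{H}$ is transcendental too: if it were a polynomial $P$, then $e^{H}=P/w^m$ would be a zero-free rational function that is entire, hence constant. That would force $H$ to be constant, a contradiction.

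Then I invoke the form of Iversen's theorem \cite{ivers} (see also \cite[Section 5, p. 371]{BE95}) that covers values taken finitely often: if a transcendental entire function takes a value $c\in\C$ only finitely many times, then $c$ is an asymptotic value. If only the classical version for omitted values is available, I reduce to it. For $m\ge1$, consider $k(w)=h(w)/w^m=e^{H(w)}$. This is entire, transcendental and omits $0$, so there is a curve $\gamma\to\infty$ with $e^{H}\to0$ along $\gamma$.

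The main obstacle is that the factor $w^m$ grows along $\gamma$, so it is not immediate that $h\to0$ there. Two routes handle this. The first is to cite the finitely-many-preimages version of Iversen directly. The second is to rerun Iversen's argument on $h$ itself. For small $\varepsilon$, the set $\{|h|<\varepsilon\}$ has a component $\Omega$ that avoids a small disc around $0$ on which $h$ has no zeros. Since $h$ is transcendental and has only the single zero at $0$, such a component exists and is unbounded, by the standard minimum-principle argument applied to $1/h$. Taking a nested chain of such components for $\varepsilon_n\to0$ and joining points across them yields a curve tending to $\infty$ on which $h\to0$. Either way, $0$ is a finite asymptotic value of $h$.
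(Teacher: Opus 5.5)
Your proposal is correct and follows the same route as the paper: $h$ vanishes only at $w=0$ (exactly $m$ times, or never when $m=0$), $h$ is transcendental, and so Iversen's theorem in its finitely-many-preimages form shows that $0$ is an asymptotic value. Your optional second route is only sketched: the disc should be one \emph{outside} which $h$ has no zeros, and building the nested chain also needs $\inf_{\Omega}|h|=0$ on each component, which follows from Liouville's theorem for bounded-above subharmonic functions. The direct citation, as in the paper, already suffices.
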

	Next, let $h\in\mathcal{L}$ and $\pi_a(z)=e^{az},$ for all $z\in\C,$ where $a\in\C^*$.\footnote{Bergweiler used the exponential function in his paper, but we can obtain an analogous result with the function $z\mapsto e^{az}.$ In this paper we use the latter function for greater generality.} Then there exists a transcendental entire function $f$ such that  
	\begin{equation} \label{berl}
		h\circ\pi_a=\pi_a\circ f
	\end{equation}
	in $\C.$ The function $f$ belongs to $\mathcal{L}_{a,\,h}$ and it is unique up to an additive constant, which is an integer multiple of $\frac{2\pi i}{a}.$ It is easy to see that if $h$ has the form \eqref{genf} and $a\in\C^*,$ then $f\in\mathcal{L}_{a,\,h}$ if and only if $f$ has the form 
	\begin{equation} \label{formf}
		f(z)=mz+\frac{1}{a}H(e^{az})+\frac{2k\pi i}{a}, \text{ for all } z\in\C,
	\end{equation}
	where $k\in\Z.$ Note that if $f$ is of the form \eqref{formf}, then
	\begin{equation} \label{proli}
		f\Big(z+\frac{2l\pi i}{a}\Big)=f(z)+m\cdot\frac{2l\pi i}{a},\,\text{ for all } z\in\C \text{ and } l\in\Z.
	\end{equation} 
	
	Let $h\in\mathcal{L},\,f\in\mathcal{L}_{a,\,h},\,a\in\C^*.$ Then it follows from \cite[Theorem p. 2]{Ber95} that 
	\begin{equation} \label{fh}
		F(f)=\pi_a^{-1}(F(h)),
	\end{equation}
	and the same relation holds for the Julia sets of these functions. The advantage of this method is that it allows us to first study the dynamics of $h,$ which might be relatively simple, and then use \eqref{fh} to study the dynamics of $f.$ 
	\subsection{Lifting a Fatou component} 
	Let $h\in\mathcal{L},\,f\in\mathcal{L}_{a,\,h},\,a\in\C^*,$ and $G$ be a Fatou component of $h.$ Then it is easy to see, using \eqref{fh}, that every component of $\pi_a^{-1}(G)$ is a Fatou component of $f.$ What is more, we wish to determine how many components $G$ is lifted to, so we prove the following proposition, which is stated in a more general setting (the first implication is mentioned in \cite[p. 28]{barg}).
	\begin{prop} \label{liftg}
		Let $G\subset\C$ be a domain and $\pi_a(z)=e^{az},$ for all $z\in\C,$ where $a\in\C^*.$ If $0\in G,$ then $U:=\pi_a^{-1}(G)$ is connected. \\
		If, additionally, $G$ is simply connected, then the converse is also true.
	\end{prop}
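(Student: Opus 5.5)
To prove the first implication, I would work with the deck transformations of the covering $\pi_a:\C\to\C^*$. These are the translations $T_l(z)=z+\frac{2l\pi i}{a}$, $l\in\Z$. Assume $0\in G$. Since $G$ is open, there is a disc $D(0,r)\subset G$. The set $G^*:=G\setminus\{0\}$ is still connected, because removing a point from a planar domain does not disconnect it, and we have $\pi_a^{-1}(G)=\pi_a^{-1}(G^*)$.

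Let $U_0$ be a component of $\pi_a^{-1}(G^*)$. The restriction $\pi_a:\pi_a^{-1}(G^*)\to G^*$ is a covering map, and the components of the preimage are permuted transitively by the deck group $\{T_l\}$. Hence every component has the form $T_l(U_0)$, and it suffices to show that $T_1(U_0)=U_0$. For this, I take the small loop $\gamma(t)=\frac r2 e^{2\pi i t}$ in $D(0,r)\setminus\{0\}\subset G^*$, based at $w_0=\gamma(0)$. Pick $z_0\in U_0$ with $\pi_a(z_0)=w_0$. The lift of $\gamma$ starting at $z_0$ is $\tilde\gamma(t)=z_0+\frac{2\pi i t}{a}$, because $e^{a\tilde\gamma(t)}=w_0e^{2\pi i t}$. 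This lift lies in $\pi_a^{-1}(G^*)$ and is connected, so it stays in $U_0$. It ends at $T_1(z_0)$, so $T_1(U_0)\cap U_0\ne\emptyset$. Since $T_1$ permutes the components, $T_1(U_0)=U_0$, and therefore $T_l(U_0)=U_0$ for all $l$. So $\pi_a^{-1}(G)=U_0$ is connected.

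For the converse, assume $G$ is simply connected, $0\notin G$, and $U=\pi_a^{-1}(G)$ is connected; I want a contradiction. Since $G\subset\C^*$ is simply connected, the branches of the logarithm exist on $G$. Concretely, fix $w_0\in G$ and a preimage $z_0$. By the lifting criterion (every loop in $G$ is null-homotopic in $G$, hence in $\C^*$), there is a continuous, in fact holomorphic, inverse branch $L:G\to\C$ with $\pi_a\circ L=\mathrm{id}$ and $L(w_0)=z_0$. Then $L(G)$ is open, since $L$ is a local inverse of a local homeomorphism. It is also closed in $U$: if $z_n\in L(G)$ and $z_n\to z\in U$, then $\pi_a(z_n)\to\pi_a(z)\in G$, so by continuity $L(\pi_a(z_n))=z_n\to L(\pi_a(z))$, which gives $z=L(\pi_a(z))\in L(G)$. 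Since $U$ is connected, $L(G)=U$. But $T_1(z_0)\in U$ and $\pi_a(T_1(z_0))=w_0$, while $L$ takes the value $z_0\neq T_1(z_0)$ at $w_0$ and $\pi_a$ is injective on $L(G)$ (as $\pi_a\circ L=\mathrm{id}$). This is a contradiction. Hence the components are exactly the sets $T_l(L(G))$, $l\in\Z$, which are disjoint.

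The only slightly delicate step is the closedness argument for $L(G)$ in $U$ and the transitivity of the deck action. Both follow from standard covering space theory of $\pi_a:\C\to\C^*$, so I do not expect a real obstacle here.
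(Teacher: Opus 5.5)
Your proof is correct, but for the first implication you take a different route from the paper. The paper never mentions components or deck transformations. It notes that $\pi_a^{-1}(D(0,r))$ is a half-plane, hence a connected subset of $U$. It then joins an arbitrary $z\in U$ to this half-plane by lifting a curve in $G\setminus\{0\}$ from $\pi_a(z)$ to a point of $D(0,r)\setminus\{0\}$.

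You instead use that the translations $T_l$ permute the components of $\pi_a^{-1}(G\setminus\{0\})$ transitively. You quote this fact; it holds because, by path lifting and connectedness of $G\setminus\{0\}$, every component maps onto $G\setminus\{0\}$. You then lift a single small loop around $0$ to see that $T_1$ fixes a component.

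The paper's version is shorter and needs only path lifting plus the explicit shape of $\pi_a^{-1}(D(0,r))$. Yours isolates exactly what is being used, namely a loop in $G\setminus\{0\}$ of winding number one about $0$. It therefore also explains why simple connectivity is needed for the converse: for an annulus centred at $0$ the preimage is a strip, which is connected although $0\notin G$.

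For the converse, your argument is the paper's in substance. The paper observes that $\pi_a|_U:U\to G$ is a covering of a simply connected domain, hence a homeomorphism by a cited standard result, contradicting the non-injectivity of $\pi_a$ on $U$. You prove this special case by hand: you construct a branch $L$ of $\frac1a\log$ on $G$ and show that $L(G)$ is open and closed in $U$. This makes the argument self-contained at the cost of a few extra lines.
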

	\begin{proof}
		First suppose that $0\in G,$ so there exists $r>0$ such that $D(0,r)\subset G.$ Then $H:=\pi_a^{-1}(D(0,r))=\{z=x+iy\in\C: x\cdot\text{Re}a-y\cdot\text{Im}a<\log r\}$ is a connected subset of $U.$ Let $z\in U\setminus H.$ Then $w:=\pi_a(z)\in G\setminus D(0,r)$ can be connected to a point $w'\in D(0,r)\setminus\{0\}$ by a curve $\gamma\subset G\setminus\{0\}$ (see Figure \ref{0in}). The unique lift $\tilde{\gamma}$ of $\gamma$ that starts at $z$ is contained in $U$ and ends at a point $z'$ inside $H.$ This shows that $U$ is connected.
		\begin{figure}[h]
			\centering
			\includegraphics[width=0.7\linewidth]{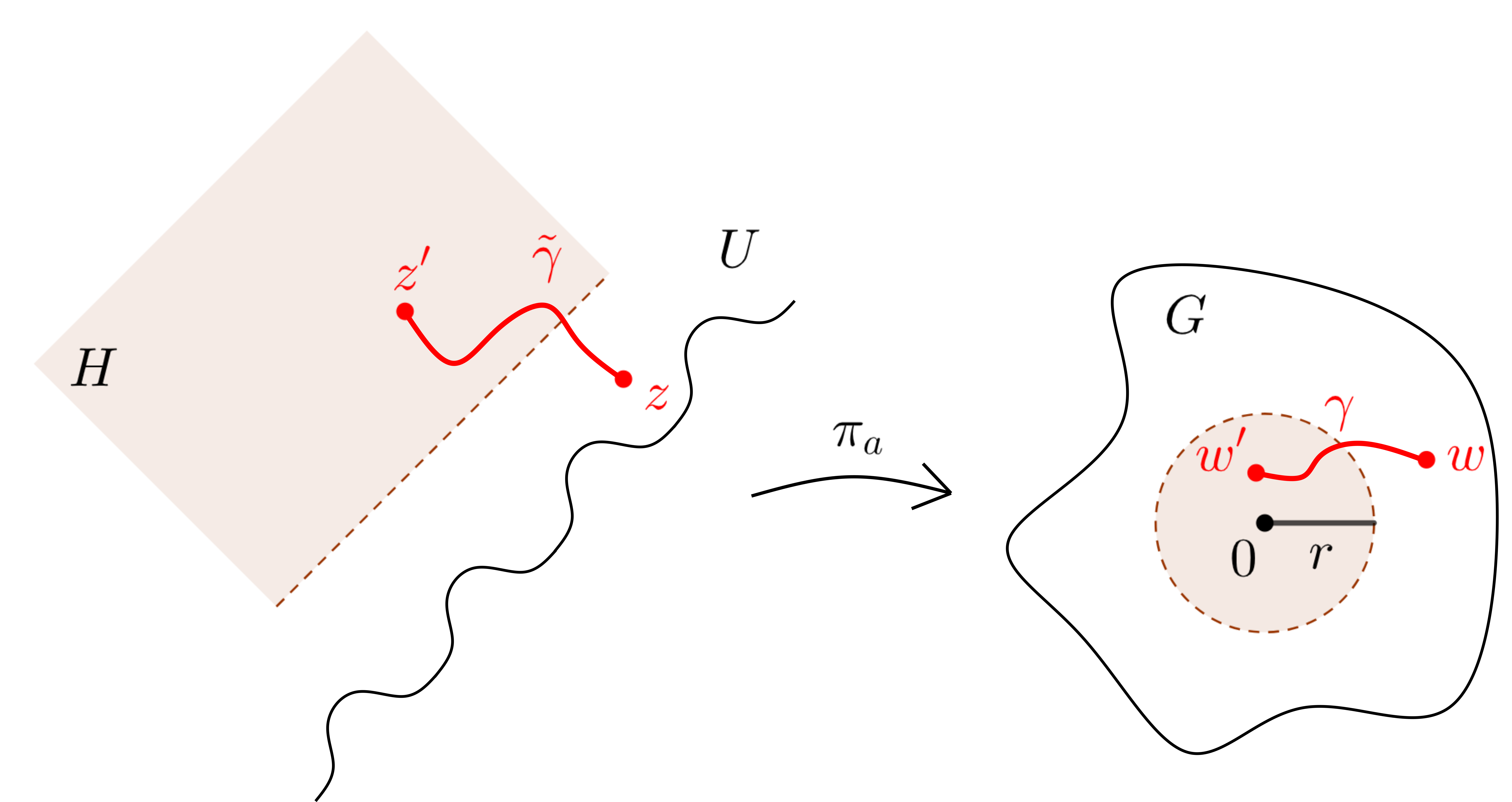}
			\caption{Proving that $U$ is connected}
			\label{0in}
		\end{figure}
		
		For the converse statement, suppose that $G$ is simply connected and $U:=\pi_a^{-1}(G)$ is connected. The definition of $U$ implies that it contains all the preimages under $\pi_a$ of any point in $G$ (except for 0, when $0\in G$). Since there are infinitely many such preimages, we deduce that $\pi_a$ is not injective in $U.$ Now, if $0\notin G,$ then the restriction $\pi_a|_U: U\to G$ is a covering map, because $\pi_a:\C\to\C^*$ is a covering map and $G$ is a connected subset of $\C^*.$ Thus, $\pi_a|_U$ has the curve lifting property (for the definition of the curve lifting property and more details on covering maps see, for example, \cite[Section 12.1, Definition 12.5, Lemma 12.6]{zakeri}), and it is also a local homeomorphism. Since $G$ is simply connected, it follows from \cite[Corollary 12.10]{zakeri} that $\pi_a$ is a homeomorphism in $U;$ a contradiction. Thus, $0\in G.$
	\end{proof}
	Proposition \ref{liftg} has an application to functions in the class $\mathcal{L}.$
	\begin{cor} \label{liftgl}
		Let $h\in\mathcal{L},\,G$ be a Fatou component of $h$ and $\pi_a(z)=e^{az},\,z\in\C,$ where $a\in\C^*.$ Then exactly one of the following holds.
		\begin{enumerate}[(i), leftmargin=*, itemsep=0pt, topsep=0pt]
			\item $0\in G$ and $U:=\pi_a^{-1}(G)$ is connected.
			\item $0\notin G$ and the connected components of $\pi_a^{-1}(G)$ form a sequence $\{U_k\}_{k\in\Z}$ such that $U_{k+1}=U_k+\frac{2\pi i}{a},$ for all $k\in\Z.$
		\end{enumerate} 
	\end{cor}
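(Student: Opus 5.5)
The corollary follows from Proposition \ref{liftg} together with a description of how the components of $\pi_a^{-1}(G)$ are permuted by translations. The key fact is that $G$ is simply connected. I would split into the two cases $0\in G$ and $0\notin G$; these are mutually exclusive, so exactly one alternative can hold.

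\textbf{Simple connectivity of $G$.} Since $h\in\mathcal{L}$ is transcendental entire, every multiply connected Fatou component is a wandering domain contained in the escaping set, by Baker's theorem. Moreover, $0$ is an asymptotic value of $h$ (Observation \ref{zeroav}) and $h(\C^*)\subset\C^*$. Suppose $G$ were multiply connected. Then $G$ would surround $0$: the Julia set is nonempty and closed, and by Baker's results a multiply connected component winds around the origin. In that case $\pi_a^{-1}(G)$ would contain a lift of a loop around $0$, so the components of $\pi_a^{-1}(G)$ would be Fatou components of $f$ whose iterates behave like those of $G$.

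A cleaner route is to invoke Bergweiler's observation that a function in $\mathcal{L}$ has no multiply connected Fatou components. Indeed, $h$ omits $0$ on $\C^*$, and a Baker wandering domain would have to contain, after iteration, round annuli around $0$ of large modulus, on which $h^n$ omits $0$. This contradicts the Bohr-type estimate forcing $h^n$ to cover a neighbourhood of $0$ inside a component that surrounds $0$. I expect this step to be the main obstacle, and I would cite it as a known result rather than reprove it. If citing is unacceptable, note that case (i) does not need simple connectivity at all: the first implication of Proposition \ref{liftg} applies to any domain.

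\textbf{Case $0\in G$.} The first part of Proposition \ref{liftg} gives directly that $U$ is connected, which is alternative (i).

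\textbf{Case $0\notin G$.} Since $G$ is simply connected, the converse part of Proposition \ref{liftg} shows that $\pi_a^{-1}(G)$ is disconnected. Let $U_0$ be one component. The restriction $\pi_a|_{U_0}\colon U_0\to G$ is a covering of a simply connected domain, hence a homeomorphism, as in the proof of Proposition \ref{liftg}.

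Let $T(z)=z+\frac{2\pi i}{a}$. Then $T$ preserves $\pi_a^{-1}(G)$, so it permutes its components. I would set $U_k=T^k(U_0)$ and check two things:
\begin{itemize}
\item[$\bullet$] The $U_k$ are pairwise distinct. If $T^k(U_0)=U_0$ with $k\neq 0$, then $z$ and $z+\frac{2k\pi i}{a}$ would both lie in $U_0$ with the same image, contradicting the injectivity of $\pi_a$ on $U_0$.
\item[$\bullet$] Every component $V$ of $\pi_a^{-1}(G)$ is some $U_k$. Take $z\in V$. Since $\pi_a(z)\in G=\pi_a(U_0)$, there is $z_0\in U_0$ with $z=z_0+\frac{2k\pi i}{a}$ for some $k\in\Z$. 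Then $z\in U_k\cap V$, so $V=U_k$.
\end{itemize}
This gives alternative (ii), with $U_{k+1}=U_k+\frac{2\pi i}{a}$.
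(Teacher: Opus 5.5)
Your proposal follows the paper's proof. Part (i) is the first half of Proposition \ref{liftg}. In (ii), simple connectivity of $G$ together with the converse half of Proposition \ref{liftg} gives disconnectedness, and the indexing comes from the $\frac{2\pi i}{a}$-periodicity of $\pi_a$. Your explicit check that the translates $U_k$ are pairwise distinct and exhaust $\pi_a^{-1}(G)$, via injectivity and surjectivity of $\pi_a|_{U_0}$, correctly spells out what the paper compresses into ``the rest follows from periodicity''.

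The one thing to fix is your justification of simple connectivity. The fact you want is true, and the paper gets it from Baker's result \cite[Corollary to Theorem 3.1]{B84}: a transcendental entire function with a finite asymptotic value (more generally, one bounded on a curve tending to $\infty$) has no multiply connected Fatou components. The paper applies this to the asymptotic value $0$ from Observation \ref{zeroav}. The mechanism is that a path to $\infty$ along which $h\to0$ must cross the curves surrounding $0$ in the forward images of a multiply connected wandering domain, and on those curves $|h|$ is uniformly large.

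Your ``Bohr-type'' heuristic does not work as stated. Those forward images escape to $\infty$ and only surround $0$, so nothing forces $h^n$ to cover a neighbourhood of $0$ there. Moreover, $h^n$ omitting $0$ on such annuli is not contradictory in general: for $m\ge1$ the zero of $h^n$ simply sits at the origin, inside the annuli. Since your first paragraph already records that $0$ is an asymptotic value of $h$, cite Baker's corollary with it. Then drop both the abandoned first argument and the heuristic.
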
 
	\begin{proof}
		Part (i) is the first part of Proposition \ref{liftg}.
		
		Regarding part (ii), assume that $0\notin G,$ and recall from Observation \ref{zeroav} that 0 is always an asymptotic value for functions in $\mathcal{L}.$ By \cite[Corollary to Theorem 3.1]{B84}, all the Fatou components of $h$ are simply connected, and so the second part of Proposition \ref{liftg} implies that $\pi_a^{-1}(G)$ is disconnected. The rest follows from the fact that $\pi_a$ is $\frac{2\pi i}{a}$-periodic in $\C.$
	\end{proof}
	\begin{rmk} \label{scu}
		\begin{enumerate}[(1), leftmargin=*, itemsep=0pt, topsep=0pt]
			\item If $h\in\mathcal{L},\,f\in\mathcal{L}_{a,\,h},\,a\in\C^*,$ and $G$ is a forward-invariant Fatou component of $h,$ then the components of $\pi_a^{-1}(G)$ are simply connected. Indeed, if $0\in G,$ then \eqref{berl} implies that $U:=\pi_a^{-1}(G)$ is a forward-invariant Fatou component of $f$ and hence, it is simply connected by \cite[Theorem 3.1]{B84}. If $0\notin G$ and $\{U_k\}_{k\in\Z}$ are the components of $\pi_a^{-1}(G),$ then $\pi_a$ is a conformal map from each $U_k$ onto $G.$ Indeed, for $k\in\Z$ fixed, if $z_1,\,z_2\in U_k$ are such that $\pi_a(z_1)=\pi_a(z_2),$ then $z_1=z_2+\frac{2l\pi i}{a}\in U_k\cap U_{k+l},$ for some $l\in\Z,$ and so $l=0,$ which establishes the injectivity of $\pi_a$ in $U_k.$ Since $G$ is simply connected, we conclude that each $U_k$ is simply connected.
			\item Let $h\in\mathcal{L},\,a\in\C^*,$ and suppose that $h$ has a forward-invariant Fatou component $G,$ such that $0\notin G.$ If $U_k,$ where $k\in\Z,$ is a connected component of $\pi_a^{-1}(G),$ then there exists an entire function $\hat{f}\in\mathcal{L}_{a,\,h}$ such that $\hat{f}(U_k)\subset U_k.$ Indeed, for $f\in\mathcal{L}_{a,\,h},$ \eqref{berl} implies that $f(U_k)\subset U_{k+l},$ for some $l\in\Z.$ If $l\ne0,$ consider the function $\hat{f}(z)=f(z)-l\cdot\frac{2\pi i}{a},$ for all $z\in\C,$ which belongs to $\mathcal{L}_{a,\,h}$ and satisfies $\hat{f}(U_k)\subset U_k.$
		\end{enumerate}		
	\end{rmk}
	Having now a complete picture of the components to which a Fatou component of $h$ is lifted, we can move on to compare the degrees of $h$ and its lifts in these components. We distinguish between two cases: $0\in G$ and $0\notin G.$ For the case where $0\in G,$ we first need the following result.
	\begin{lemma} \label{fdegf}
		Let $h$ be of the form \eqref{genf}, $f\in\mathcal{L}_{a,\,h},\,a\in\C^*,$ and $G$ be a forward-invariant Fatou component of $h,$ with $0\in G.$ If $\deg f|_U<+\infty,$ where $U=\pi_a^{-1}(G),$ then $m\ge1$ and $\deg f|_U=1.$  
	\end{lemma}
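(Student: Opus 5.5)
The plan is to exploit the translation structure of $U$. Write $\tau=\frac{2\pi i}{a}$. Since $U=\pi_a^{-1}(G)$ is the full preimage of $G$ under the $\tau$-periodic map $\pi_a$, we have $U+\tau=U$. By \eqref{berl} and the forward invariance of $G$, $f(U)\subset\pi_a^{-1}(G)=U$, so $f|_U$ is a self-map of $U$. By Remark \ref{scu}(1), $U$ is a simply connected, forward-invariant Fatou component of $f$.

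\emph{Step 1: $m\ge1$.} Suppose $m=0$. By \eqref{proli}, $f(z+l\tau)=f(z)$ for all $l\in\Z$. Take any $z_0\in U$. Then the infinitely many distinct points $z_0+l\tau$, $l\in\Z$, all lie in $U$ and all have the same image $f(z_0)$. Hence $\deg f|_U=+\infty$, contradicting the hypothesis. Therefore $m\ge1$.

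\emph{Step 2: $f|_U$ has no critical points.} Differentiating \eqref{proli} gives $f'(z+l\tau)=f'(z)$ for all $z\in\C$ and $l\in\Z$. If some $c\in U$ had $f'(c)=0$, then every $c+l\tau\in U$ would also be a critical point. Their images $f(c)+ml\tau$ lie in $U$ and are pairwise distinct because $m\ge1$. On the other hand, a finite-degree map $f|_U:U\to U$ is a proper map, hence a finite branched covering. A finite branched covering has only finitely many critical points, since each contributes to the Riemann--Hurwitz count, which is bounded in terms of the degree. This is a contradiction, so $f'\neq0$ on $U$.

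\emph{Step 3: conclude univalence.} By Step 2, $f|_U:U\to U$ is a proper local homeomorphism, hence a finite unbranched covering of $U$. Since $U$ is simply connected, every connected covering of it is a homeomorphism, as in the argument with \cite[Corollary 12.10]{zakeri} used in Proposition \ref{liftg}. Therefore $\deg f|_U=1$.

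\emph{Main obstacle.} The delicate point is justifying that finite degree of $f$ on $U$ makes $f|_U:U\to U$ proper, i.e.\ a genuine branched covering. A priori an entire function restricted to a Fatou component could omit values of the target or behave badly near asymptotic values. I would first invoke the standard fact that $f(U)$ is all of the target Fatou component except for at most one point, together with the fact that finite valence on a component forces the restriction to be proper (as in the setting of \cite{ERS20}, Proposition 1.1). If that route is awkward, I would avoid properness altogether. Instead I would argue directly that finitely many points of $U$ have infinitely many distinct critical values above them in $U$, which contradicts finite valence by a local-counting argument near the accumulation of the critical values $f(c)+ml\tau$ along the direction $\tau$.
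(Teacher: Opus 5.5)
Your proposal is correct and follows essentially the same route as the paper. It rules out $m=0$ via the $\frac{2\pi i}{a}$-periodicity from \eqref{proli}, then uses the translation-invariance of $U$ to show that one critical point would force infinitely many critical points in $U$, contradicting finite degree. The paper justifies that contradiction by citing \cite[Proposition 2.8]{BerFR15}, which is exactly the properness fact you flagged as your main obstacle. It then concludes that $f|_U$ is a covering of the simply connected domain $U$, hence of degree $1$; the only cosmetic difference is that the paper finishes with the Riemann--Hurwitz formula rather than your covering-space argument, which amounts to the same thing.
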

	\begin{proof}
		First, if $m=0,$ then $f$ is $\frac{2\pi i}{a}$-periodic by \eqref{proli}, and so it has infinite degree in $U$ (since $U+\frac{2\pi i}{a}=U$); a contradiction to the hypothesis. Therefore, $m\ge1.$ Next, one can observe that $f$ does not have critical points in $U.$ Indeed, if $z\in U$ is a critical point of $f,$ then $z+\frac{2k\pi i}{a}\in U,$ and \eqref{proli} implies that $z+\frac{2k\pi i}{a}$ is a critical point of $f,$ for all $k\in\Z,$ hence $f$ has infinitely many critical points in $U.$ This implies that $f|_U$ has infinite degree (see \cite[Proposition 2.8]{BerFR15}); a contradiction. Thus, $f|_U$ is a local homeomorphism and, by \cite[Theorem 12.33]{zakeri}, $f|_U:U\to U$ is a covering map. It follows from the Riemann-Hurwitz formula (see \cite[Theorem 12.47]{zakeri}) that $\deg f|_U=1.$ 
	\end{proof}
	\noindent The following proposition shows the relation between the degree of $h$ and the degree of its lifts, for the case where $0\in G.$
	\begin{prop} \label{degrees0}
		Let $h$ be of the form \eqref{genf}, $f\in\mathcal{L}_{a,\,h},\,a\in\C^*,$ and $G$ be a forward-invariant Fatou component of $h,$ with $0\in G.$ Then $h|_G$ has infinite degree if and only if $f|_U$ has infinite degree, where $U=\pi_a^{-1}(G).$ In particular, if $h|_G$ has finite degree, then $\deg h|_G=m\ge1,$ and $\deg f|_U=1.$ 
	\end{prop}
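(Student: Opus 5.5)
We prove the two implications separately and then read off the finite-degree statement from Lemma \ref{fdegf}. Throughout, $U=\pi_a^{-1}(G)$ is connected (Proposition \ref{liftg}) and forward-invariant under $f$ by \eqref{berl}. Since $0\in G$ and $h(0)=0$ when $m\ge1$, the punctured domain $G^*:=G\setminus\{0\}$ satisfies $\pi_a(U)=G^*$. The map $\pi_a|_U:U\to G^*$ is a universal covering.

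\textbf{$f|_U$ finite $\Rightarrow$ $h|_G$ finite.} Assume $\deg f|_U<+\infty$. Lemma \ref{fdegf} gives $m\ge1$ and $\deg f|_U=1$, so $f|_U$ is injective. Fix a generic $w\in G^*$ that is not a critical value of $h$ and is not $h(0)$. Every $u\in h^{-1}(w)\cap G$ with $u\ne0$ lies in $G^*$; choose $z\in U$ with $\pi_a(z)=u$. Then $\pi_a(f(z))=w$, so $f(z)\in \pi_a^{-1}(w)$. The fibre $\pi_a^{-1}(w)=\{z_0+2l\pi i/a\}$ is a single orbit of translation. By \eqref{proli}, the preimage under $f|_U$ of this fibre is a union of orbits $z+2l\pi i/a$, and $f$ maps each such orbit to $\{f(z)+2ml\pi i/a\}$, a set hitting $m$ residue classes mod $2\pi i/a$. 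Injectivity of $f|_U$ forces each orbit of preimages to correspond to a distinct class. So the number of $\pi_a$-classes of preimages, which is $\#(h^{-1}(w)\cap G^*)$, equals at most $m$. Adding the point $0$ (only relevant for $w=0$), this shows $\deg h|_G\le m$, and in fact $=m$ by counting $h^{-1}(0)\cap G$ with multiplicity (all $m$ zeros sit at $0\in G$). The step needing care is the bookkeeping that translates injectivity of $f$ on full $2\pi i/a$-orbits into exactly $m$ preimage classes. I would phrase it as: $f$ induces on $U/\langle 2\pi i/a\rangle\cong G^*$ the map $h$, and the translation action lifts with multiplier $m$.

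\textbf{$h|_G$ finite $\Rightarrow$ $f|_U$ finite.} Assume $\deg h|_G=d<\infty$, and suppose, for contradiction, that $\deg f|_U=\infty$. Then for generic $z_1\in U$ there are infinitely many $z\in U$ with $f(z)=z_1$. All of them map under $\pi_a$ into $h^{-1}(\pi_a(z_1))\cap G^*$, which has at most $d$ points. Hence infinitely many lie in one fibre $z+2\pi i\Z/a$. But by \eqref{proli}, $f(z+2l\pi i/a)=f(z)+2ml\pi i/a$. If $m\ge1$, these values are pairwise distinct, so at most one point of each fibre maps to $z_1$, which is a contradiction. If $m=0$, then $h$ omits $0$ on $\C$, so $h=e^H$. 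Here I would show directly that $h|_G$ has infinite degree, contradicting the hypothesis. Indeed, $h|_G:G\to G$ finite-degree would make $h|_G$ proper, yet $0\in G$ is not attained while points near $0$ are attained (since $0\in G=h(G)$ up to closure, as $G$ is invariant and $h(G)$ is dense in $G$ with at most two omitted points). Properness then forces surjectivity onto $G$, so $0$ would be attained, a contradiction. Thus $m\ge1$, so $f|_U$ has finite degree, and Lemma \ref{fdegf} gives $\deg f|_U=1$.

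Combining both directions gives the equivalence. Feeding $\deg f|_U=1$ into the first part of the argument yields $\deg h|_G=m\ge1$. The main obstacle is the counting in the first direction, namely the passage from a single preimage per translation orbit to exactly $m$ preimages of $h$. The quotient viewpoint $U/\langle 2\pi i/a\rangle\to G^*$, combined with a lift of a small loop around $0$, which $h$ winds $m$ times, should make this precise.
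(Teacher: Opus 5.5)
Your proof is correct and follows essentially the paper's route. The direction ``$f|_U$ finite $\Rightarrow$ $h|_G$ finite'' is the same orbit-counting argument under univalence, and the converse rests, as in the paper, on the fact from \eqref{proli} that for $m\ge1$ each $\pi_a$-fibre contains at most one $f$-preimage of a given point.

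One statement in the counting step is misworded. $f$ maps a $\frac{2\pi i}{a}$-orbit onto \emph{one} of the $m$ cosets of $\frac{2m\pi i}{a}\Z$ inside the fibre $\pi_a^{-1}(w)$, not onto ``$m$ residue classes'' as you wrote. That is exactly why injectivity forces distinct orbits onto distinct cosets and bounds the count by $m$.

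The only genuine deviation from the paper is the case $m=0$. You dispose of it by noting that a finite-degree, hence proper \cite[Proposition 2.8]{BerFR15}, self-map of $G$ is surjective and so cannot omit $0\in G$. The paper instead uses the $\frac{2\pi i}{a}$-periodicity of $f$ to produce infinitely many $h$-preimages. Both are valid and yours is slightly shorter; your aside about points near $0$ being attained is not needed.
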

	\begin{proof}
		First assume that $h|_G$ has infinite degree. If $f|_U$ has finite degree, then Lemma \ref{fdegf} implies that $m\ge1$ and $\deg f|_U=1.$ 
		\begin{figure}[h]
			\centering
			\includegraphics[width=0.7\linewidth]{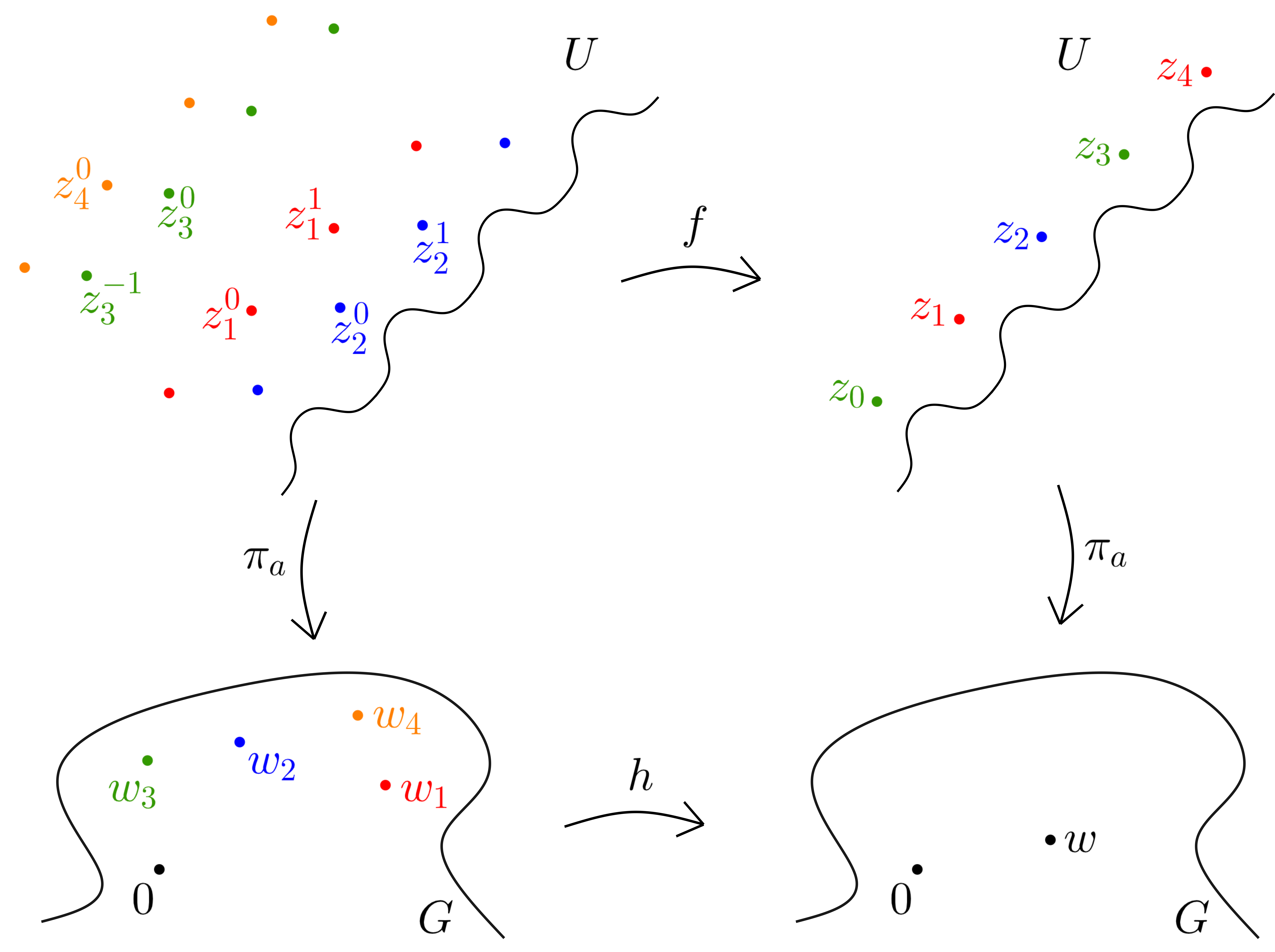}
			\caption{Showing that $f$ has infinite degree in $U$ ($m=3$ here)}
			\label{hinfif}
		\end{figure}
		Now, let $w\in G\setminus\{0\}.$ By the hypothesis, $w$ has infinitely many preimages under $h$ in $G\setminus\{0\},$ say $\{w_k\}_{k\in\N}.$ Let $\{z_k^j\}_{j\in\Z}$ be the preimages of $w_k$ under $\pi_a$ in $U,$ with $z_k^{j+1}=z_k^j+\frac{2\pi i}{a},$ for each $k\in\N$ and $j\in\Z,$ and let $\{z_l\}_{l\in\Z}$ be the preimages of $w$ under $\pi_a$ in $U,$ with $z_{l+1}=z_l+\frac{2\pi i}{a},$ for all $l\in\Z$ (see Figure \ref{hinfif}). By \eqref{berl}, we deduce that $f(z_k^j)\in\{z_l: l\in\Z\},$ for all $k\in\N$ and $j\in\Z.$ Without loss of generality, we may assume that $f(z_1^0)=z_1.$ It follows from \eqref{proli} that $f(z_1^j)=z_{1+jm},$ for all $j\in\Z.$ Since $f$ is univalent in $U,\,f(z_2^0)\notin\{z_{1+jm}: j\in\Z\},$ so we may assume without loss of generality that $f(z_2^0)=z_2.$ Then, using again \eqref{proli}, we get that $f(z_2^j)=z_{2+jm},$ for all $j\in\Z.$ By continuing this way, we obtain that $f(\{z_k^j: j\in\Z,\,k=1,\dots,m\})=\{z_l: l\in\Z\}$ (see Figure \ref{hinfif}). But $f(z_{m+1}^0)\in\{z_l: l\in\Z\},$ so $f$ is not univalent in $U$; a contradiction. Thus, $f|_U$ has infinite degree.
			
		\begin{figure}[h]
			\centering
			\includegraphics[width=0.6\linewidth]{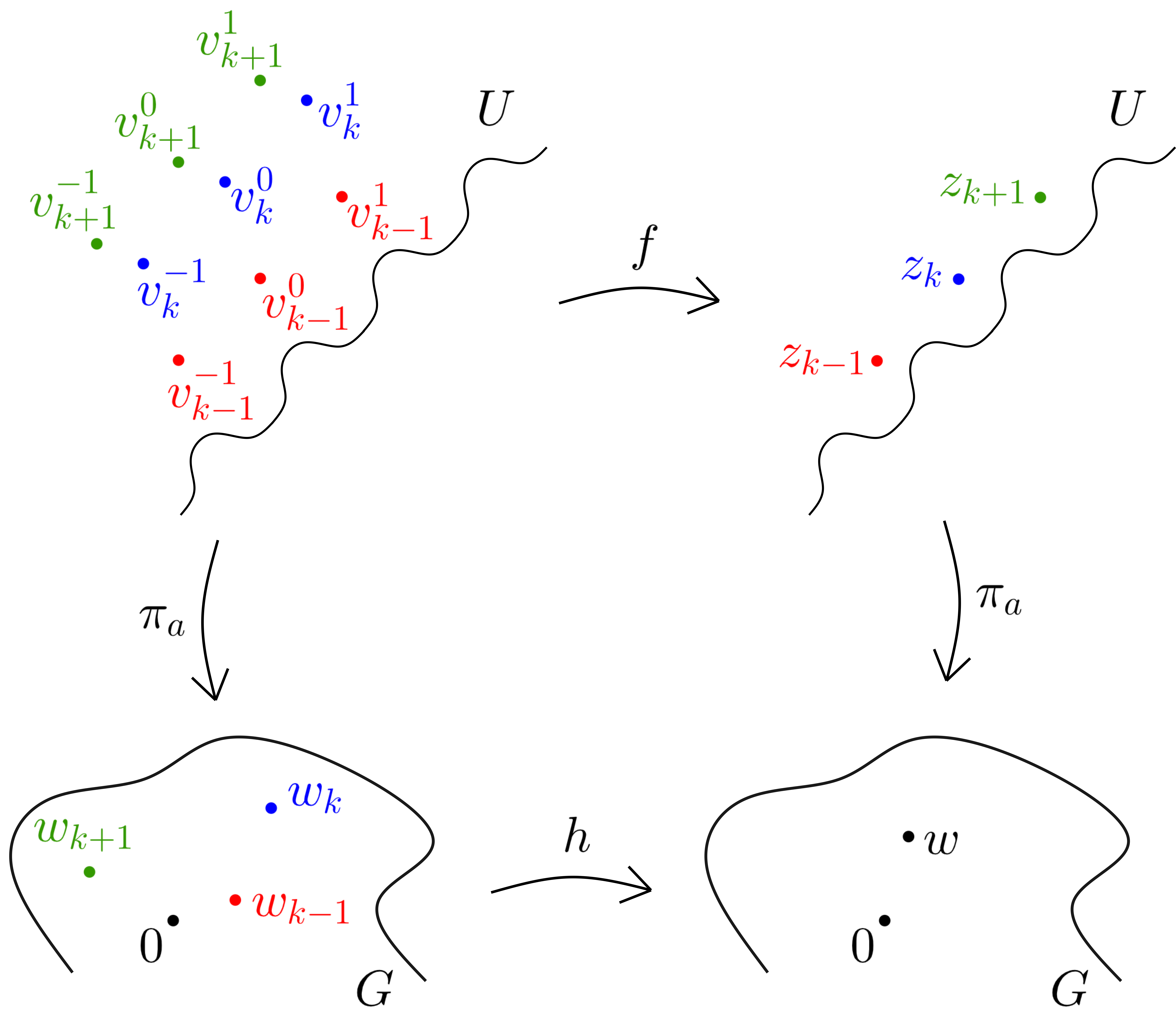}
			\caption{Showing that $h$ has infinite degree in $G$}
			\label{finfih}
		\end{figure}
		For the opposite direction, assume that $f|_U$ has infinite degree. Let $w\in G\setminus\{0\}$ and $\{z_k\}_{k\in\Z}$ be its preimages under $\pi_a $ in $U,$ with $z_{k+1}=z_k+\frac{2\pi i}{a},$ for all $k\in\Z.$ Also, let $\{z_{k,\,n}\}_{n\in\N}$ be the preimages of $z_k$ under $f$ in $U,$ for each $k\in\Z.$ Fix $k\in\Z.$ If there exist $n,\,n'\in\N$ such that 
		\begin{equation} \label{prj}
			z_{k,\,n'}=z_{k,\,n}+\frac{2l\pi i}{a},
		\end{equation}
		for some $l\in\Z\setminus\{0\},$ then \eqref{proli} implies that $m=0$ and $f|_U$ is $\frac{2\pi i}{a}$-periodic. Thus, for each $k\in\Z,$ the set of preimages of $z_k$ under $f$ in $U,\,\{z_{k,\,n}\}_{n\in\N},$ contains a subset $\{v_k^j\}_{j\in\Z},$ such that $v_k^{j+1}=v_k^j+\frac{2\pi i}{a},$ for all $j\in\Z,$ and so $\pi_a(v_k^j)=:w_k\in G,$ for all $j\in\Z$ (see Figure \ref{finfih}). By \eqref{berl}, the points $w_k,\,k\in\Z,$ are preimages of $w$ under $h$ in $G,$ and they are infinitely many, so $h|_G$ has infinite degree.	If, for $k\in\Z$ fixed, there do not exist $n,\,n'\in\N$ with the property \eqref{prj}, then the points $\pi_a(z_{k,\,n})\in G,\,n\in\N,$ are infinitely many, and they are preimages of $w$ under $h$ in $G,$ hence $h|_G$ has infinite degree.
		
		Finally, assume that $h|_G$ has finite degree. By what we have shown above, $f|_U$ has finite degree too, so Lemma \ref{fdegf} implies that $m\ge1$ and $\deg f|_U=1.$ Also, since $h|_G$ is a proper map (see \cite[Proposition 2.8]{BerFR15}) and $h^{-1}(\{0\})=\{0\},$ with the local degree of $h$ at 0 being equal to $m,$ we have that $\deg h|_G=m.$
	\end{proof}
	\begin{rmk}
		Under the hypotheses of Proposition \ref{degrees0}, with $\deg h|_G<+\infty,$ one can observe that $h$ does not have critical points in $G,$ except possibly for 0. Indeed, if $w\ne0$ is a critical point of $h$ in $G,$ then $\pi_a^{-1}(\{w\})\subset U$ will consist of infinitely many critical points of $f,$ because of \eqref{berl}; a contradiction to the fact that $f$ is univalent in $U.$
	\end{rmk}
	\noindent The case where $0\notin G$ is quite simpler, as one can observe in the following proposition.
	\begin{prop} \label{degreesn0}
		Let $h\in\mathcal{L},\,f\in\mathcal{L}_{a,\,h},\,a\in\C^*,$ and $G$ be a forward-invariant Fatou component of $h,$ with $0\notin G.$ Then $\textnormal{deg}\,h|_G=\textnormal{deg}\,f|_{U_k},$ for all $k\in\Z,$ where $\{U_k\}_{k\in\Z}$ are the connected components of $\pi_a^{-1}(G).$
	\end{prop}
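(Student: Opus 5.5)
By Remark \ref{scu}(1), $\pi_a|_{U_k}:U_k\to G$ is a conformal map for each $k\in\Z$. By \eqref{berl} and connectedness, $f(U_k)\subset U_{k+l}$ for some $l\in\Z$. The plan is to show that on $U_k$ the map $f$ is conformally conjugate to $h|_G$ through these restrictions of $\pi_a$; equality of degrees then follows at once.

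First, the index $l$ is the same for every $k$. By \eqref{proli}, $f(U_k+\frac{2j\pi i}{a})=f(U_k)+mj\frac{2\pi i}{a}$, so $f(U_{k+j})\subset U_{k+l+mj}$. Since the degree question concerns a single fixed $k$, we only need one such $l=l(k)$.

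Set $\phi_k:=\pi_a|_{U_k}:U_k\to G$ and $\phi_{k+l}:=\pi_a|_{U_{k+l}}:U_{k+l}\to G$; both are biholomorphisms. Relation \eqref{berl} restricted to $U_k$ gives
\[
\phi_{k+l}\circ f|_{U_k}=h\circ\phi_k ,
\]
so that $f|_{U_k}=\phi_{k+l}^{-1}\circ h|_G\circ\phi_k$. Here we use $h(G)\subset G$, which holds because $G$ is forward-invariant.

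Pre- and post-composition with bijections does not change the number of preimages. Hence for any $z\in U_{k+l}$, the points of $f|_{U_k}^{-1}(z)$ correspond bijectively to the points of $h|_G^{-1}(\pi_a(z))$. Multiplicities are preserved as well, since local degrees are invariant under composition with conformal maps. Taking the supremum over $z$ (or using properness when the degree is finite, and infinitely many preimages otherwise) gives $\deg f|_{U_k}=\deg h|_G$. Both sides may equal $+\infty$.

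The only point needing care is that $\phi_k$ and $\phi_{k+l}$ are onto $G$. Injectivity is given by Remark \ref{scu}(1). For surjectivity, every $w\in G$ has a preimage under $\pi_a$ in some $U_j$, because $0\notin G$. Translating by a multiple of $\frac{2\pi i}{a}$ moves this preimage into $U_k$, using $U_{j+1}=U_j+\frac{2\pi i}{a}$ from Corollary \ref{liftgl}(ii). I do not expect any step to be a serious obstacle; the argument is essentially this conjugacy bookkeeping.
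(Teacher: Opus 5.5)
Your argument is correct and is the paper's proof written out in full. The paper only cites that $\pi_a$ maps each $U_k$ conformally onto $G$ (Remark~\ref{scu}(1)), which amounts to the conjugacy $f|_{U_k}=(\pi_a|_{U_{k+l}})^{-1}\circ h|_G\circ\pi_a|_{U_k}$ that you make explicit, surjectivity check included.

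One slip, harmless because you never use it: your aside that the shift $l$ is independent of $k$ is false unless $m=1$. Your own computation gives $f(U_{k+j})\subset U_{(k+j)+l+(m-1)j}$. As you then say, a single $l=l(k)$ for the fixed $k$ is all that is needed.
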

	\begin{proof}
		The proof follows directly from the fact that $\pi_a$ is conformal in each $U_k$ (see comment in Remark \ref{scu}(1)).
	\end{proof}
	We finish this section with the proof of Proposition \ref{genrem}, which describes the different types and degrees of Baker domains we obtain when we lift a Fatou component $G,$ which contains the origin. For the classification of Baker domains to doubly-parabolic, simply-parabolic and hyperbolic, see, for example, \cite[Theorem 2.1]{Jov24}.
	\begin{proof}[Proof of Proposition \ref{genrem}]
		Since $m\ge1,$ 0 is a fixed point of $h.$ Note that if $m=1$ and Re$H(0)>0,$ then $|h'(0)|=|e^{H(0)}|=e^{\text{Re}H(0)}>1,$ so 0 is a repelling fixed point of $h,$ and hence $0\in J(h);$ a contradiction. Thus, if $m=1,$ then either Re$H(0)=0,$ or Re$H(0)<0.$
		
		Suppose that $m=1$ and Re$H(0)=0.$ Then $|h'(0)|=1,$ and, since we have assumed that $0\in G\subset F(h),$ we deduce that 0 is an irrationally indifferent fixed point of $h$ and $G$ is a Siegel disc, hence $\deg h|_G=m=1.$ It is easy to see that $U=\pi_a^{-1}(G),$ which is forward-invariant by Remark \ref{scu}(1), cannot be an attracting basin, a parabolic basin, or a Siegel disc, so it is a Baker domain of $f.$ Working as in the proof of \cite[Lemma 3.3]{barg}, we can show that $f|_U\sim\text{id}_\mathbb{H}\pm1$ and, more precisely, $\text{id}_\mathbb{H}\pm1$ is an inner function dynamically associated to $f|_U$. Thus, $f|_U$ is univalent and $U$ is a simply-parabolic Baker domain.
		
		Next, let $m=1$ and Re$H(0)<0.$ Then $0<|h'(0)|<1,$ so 0 is an attracting fixed point of $h,$ and hence $G$ is the immediate attracting basin of 0. By \eqref{berl}, $h^n(e^{az})=e^{af^n(z)},$ for all $z\in\C$ and $n\in\N,$ and since $h^n(w)\to0$ as $n\to\infty,$ for all $w\in G,$ we deduce that $f^n(z)\to\infty$ as $n\to\infty,$ for all $z\in U.$ This shows that $U$ is a Baker domain of $f.$ Working as in the proof of \cite[Lemma 3.3]{barg}, we can show that $f|_U\sim\textnormal{id}_\C+1,$ which means that $U$ is a doubly-parabolic Baker domain. Doubly-parabolic Baker domains are non-univalent, so Proposition \ref{degrees0} implies that $h$ has infinite degree in $G,$ and so $f|_U$ has infinite degree.
		
		Finally, let $m\ge2.$ Then 0 is a super-attracting fixed point of $h$ and $G$ is its super-attracting basin. It follows from \eqref{berl} that $U$ is a Baker domain of $f.$ Working as in the proof of \cite[Lemma 3.3]{barg}, we can show that there exists $\lambda>1$ such that $f|_U\sim\lambda\cdot\text{id}_\mathbb{H},$ which means that $U$ is a hyperbolic Baker domain. If $\deg h|_G<+\infty,$ then it follows from Proposition \ref{degrees0} that $\deg h|_G=m\ge2$ and $\deg f|_U=1.$ Otherwise, $f|_U$ has infinite degree.
	\end{proof}
	\section{Proof of Proposition \ref{wandinn}} \label{wdinn}
	We start by proving Proposition \ref{wandinn}, which is an independent result. It can be applied to lifts of transcendental entire functions, and it will be used in the proof of Theorem \ref{const}.
	\begin{proof}[Proof of Proposition \ref{wandinn}]
		For (i), we can show by induction that 
		\begin{equation} \label{fprop}
			f(z+kc)=f(z)+mkc,
		\end{equation} 
		for all $k\in\Z$ and $z\in\C.$ Recall that $\hat{f}(z)=f(z)+lc,$ where $l\in\Z.$ Using \eqref{fprop}, we can show, again by induction, that 
		$$\hat{f}^n(z)=f^n(z)+(1+m+\dots+m^{n-1})lc,$$ for all $n\in\N$ and $z\in\C,$ which implies that $\hat{f}^n(z)-\hat{f}^n(w)=f^n(z)-f^n(w),$ for all $n\in\N$ and $z,\,w\in\C.$ 
		
		Let $z_0\in F(\hat{f}).$ Then there exists a neighbourhood $N(z_0)$ of $z_0$ in which $\{\hat{f}^n\}_{n\in\N}$ is equicontinuous. Thus, if $w\in N(z_0)$ and $\varepsilon>0,$ there exists $\delta=\delta(w,\,\varepsilon,\,\hat{f})>0$ such that for all $z\in\C$ and $n\in\N,$ $$|z-w|<\delta \;\text{ implies }\; |f^n(z)-f^n(w)|=|\hat{f}^n(z)-\hat{f}^n(w)|<\varepsilon.$$ It follows that the family $\{f^n\}_{n\in\N}$ is equicontinuous in $N(z_0),$ hence $z_0\in F(f).$ Similarly, we can show that $F(f)\subset F(\hat{f}),$ and so $F(\hat{f})=F(f).$
		
		For (ii), note first that 
		\begin{equation} \label{imuk}
			f(U_k)\subset U_{k_0+mk}, \text{ for all } k\in\Z,
		\end{equation}
		by \eqref{fprop} and hypotheses (a) and (b). Next, consider the sets $$\mathcal{R}_k:=\{\varphi_k:\D\to U_k\,|\,\varphi_k \text{ is a Riemann map}\},\,k\in\Z,$$ for which one can observe that, for any $\kappa,\,\lambda\in\Z,$ 
		\begin{equation} \label{kimpl}
			\varphi_\kappa\in\mathcal{R}_\kappa \;\Rightarrow\; \varphi_\kappa+(\lambda-\kappa)c\in\mathcal{R}_\lambda, 
		\end{equation}
		and
		\begin{equation} \label{invphi}
			\varphi_\kappa\in\mathcal{R}_\kappa \;\Rightarrow\; \varphi_\kappa^{-1}(w)=(\varphi_{\kappa}+(\lambda-\kappa)c)^{-1}(w+(\lambda-\kappa)c), \text{ for all } w\in U_\kappa.
		\end{equation}
		
		Now, fix $j,\,j'\in\Z.$ By \eqref{imuk}, the form of $\hat{f}$ and hypothesis (a) we have that $f(U_j)\subset U_{k_0+mj}$ and $\hat{f}(U_{j'})\subset U_{k_0+mj'+l}.$ Next, let $\varphi_k\in\mathcal{R}_k,$ for $k\in\{j,\,k_0+mj,\,j',\,k_0+mj'+l\},$ and consider the functions $g_{f,\,j}=\varphi_{k_0+mj}^{-1}\circ f\circ\varphi_j$ and $g_{\hat{f},\,j'}=\varphi_{k_0+mj'+l}^{-1}\circ \hat{f}\circ\varphi_{j'},$ which are inner functions associated to $f|_{U_j}$ and $\hat{f}|_{U_{j'}},$ respectively. Then, for all $z\in\D,$ and for $\kappa=k_0+mj,\,\lambda=k_0+mj'+l$ and $w=\hat{f}(\varphi_j(z)+(j'-j)c)-m(j'-j)c-lc$, \eqref{invphi} gives
		\begin{equation} \nonumber
			\begin{split}
				g_{f,\,j}(z)&=\varphi_{k_0+mj}^{-1}\circ f\circ\varphi_j(z)=\varphi_{k_0+mj}^{-1}(f(\varphi_j(z)+(j'-j)c-(j'-j)c))\\&\overset{\eqref{fprop}}{=}\varphi_{k_0+mj}^{-1}(f(\varphi_j(z)+(j'-j)c)-m(j'-j)c)\\&=\varphi_{k_0+mj}^{-1}(\hat{f}(\varphi_j(z)+(j'-j)c)-m(j'-j)c-lc)\\&\overset{\eqref{invphi}}{=}(\varphi_{k_0+mj}+(k_0+mj'+l-k_0-mj)c)^{-1}(\hat{f}(\varphi_j(z)+(j'-j)c))\\&=(\varphi_{k_0+mj}+(k_0+mj'+l-k_0-mj)c)^{-1}\circ \hat{f}\circ(\varphi_j+(j'-j)c)(z),
			\end{split}
		\end{equation} 
		and \eqref{kimpl} implies that $\varphi_j+(j'-j)c\in\mathcal{R}_{j'}$ and $\varphi_{k_0+mj}+(k_0+mj'+l-k_0-mj)c\in\mathcal{R}_{k_0+mj'+l},$ so $g_{f,\,j}$ is associated to $\hat{f}|_{U_{j'}}.$ Similarly, for $z\in\D,$ and for $\kappa=k_0+mj'+l,\,\lambda=k_0+mj$ and $w=f(\varphi_{j'}(z)+(j-j')c)-m(j-j')c+lc,$ \eqref{invphi} gives
		\begin{equation} \nonumber
			\begin{split}
				g_{\hat{f},\,j'}(z)&=\varphi_{k_0+mj'+l}^{-1}\circ \hat{f}\circ\varphi_{j'}(z)=\varphi_{k_0+mj'+l}^{-1}(\hat{f}(\varphi_{j'}(z)+(j-j')c-(j-j')c))\\&=\varphi_{k_0+mj'+l}^{-1}(f(\varphi_{j'}(z)+(j-j')c-(j-j')c)+lc)\\&\overset{\eqref{fprop}}{=}\varphi_{k_0+mj'+l}^{-1}(f(\varphi_{j'}(z)+(j-j')c)-m(j-j')c+lc)\\&\overset{\eqref{invphi}}{=}(\varphi_{k_0+mj'+l}+(k_0+mj-k_0-mj'-l)c)^{-1}(f(\varphi_{j'}(z)+(j-j')c))\\&=(\varphi_{k_0+mj'+l}+(k_0+mj-k_0-mj'-l)c)^{-1}\circ f\circ(\varphi_{j'}+(j-j')c)(z),
			\end{split}
		\end{equation}
		and \eqref{kimpl} implies that $\varphi_{j'}+(j-j')c\in\mathcal{R}_j$ and $\varphi_{k_0+mj'+l}+(k_0+mj-k_0-mj'-l)c\in\mathcal{R}_{k_0+mj},$ so $g_{\hat{f},\,j'}$ is associated to $f|_{U_j}.$
	\end{proof}
	\begin{rmk} \label{wdpli}
		\begin{enumerate}[(1), leftmargin=*, itemsep=0pt, topsep=0pt]
			\item Let $f$ be as in Proposition \ref{wandinn}. 
			\begin{enumerate}
				\item[(a)] If $m=0,$ then \eqref{imuk} implies that $f(U_{k})\subset U_{k_0},$ for all $k\in\Z,$ so $U_{k_0}$ is forward-invariant and $U_k$ is a preimage component of the forward-invariant Fatou component $U_{k_0},$ for each $k\in\Z.$
				\item[(b)] If $m=1,$ then either $k_0=0,$ and so $U_k$ is forward-invariant for all $k\in\Z$ by \eqref{imuk}, or $k_0\ne0.$ In the latter case, it follows from \eqref{imuk} that $f(U_{nk_0+k})\subset U_{(n+1)k_0+k},$ for all $n\in\N\cup\{0\}$ and $k\in\Z,$ hence $U_k$ is a wandering domain for each $k\in\Z.$ 
				\item[(c)] Finally, if $m\ge2,$ then set $K:=-\frac{k_0}{m-1}$ and note that $k_0+mK=K.$ For each $k\in\Z,$ consider the sequence $\{\alpha_{k,\,n}\}_{n=0}^{+\infty}\subset\Z,$ defined by $$\alpha_{k,\,0}=k \text{ and } \alpha_{k,\,n}=k_0(1+m+\dots+m^{n-1})+m^nk,\,n\ge1.$$ It follows from \eqref{imuk} that $f(U_{\alpha_{k,\,n}})\subset U_{\alpha_{k,\,n+1}},$ for all $n\ge0.$ Also, if $k>K,$ then $\{\alpha_{k,\,n}\}_{n=0}^{+\infty}$ is strictly increasing, whereas if $k<K,\,\{\alpha_{k,\,n}\}_{n=0}^{+\infty}$ is strictly decreasing. Now, if $K\in\Z,$ then $f(U_K)\subset U_{k_0+mK}=U_K$ by \eqref{imuk}, so $U_K$ is forward-invariant, and from the above discussion we have that $U_k$ is a wandering domain for each $k\in\Z\setminus\{K\}.$ If $K\notin\Z,\,U_k$ is a wandering domain for each $k\in\Z.$
			\end{enumerate}
			The same conclusions can be derived for a function $\hat{f}$ defined as in Proposition \ref{wandinn} (note that such a function $\hat{f}$ has the same property with $f$: $\hat{f}(z\pm c)=\hat{f}(z)\pm mc$).
			\item Let $h$ be a function of the form \eqref{genf}, and $f,\,\hat{f}\in\mathcal{L}_{a,\,h},\,a\in\C^*.$ Also, assume that $h$ has a forward-invariant Fatou component $G$ such that $0\notin G,$ and let $\{U_k\}_{k\in\Z}$ be the connected components of $\pi_a^{-1}(G).$ Then \eqref{berl}-\eqref{proli}, Corollary \ref{liftgl}(ii) and Remark \ref{scu}(1) imply that $f,\,\hat{f}$ satisfy the hypotheses of Proposition \ref{wandinn} for $c=\frac{2\pi i}{a}.$ Thus, it follows from Proposition \ref{wandinn}(ii) that, if $k\in\Z$ and $g_{f,\,k},\,g_{\hat{f},\,k}$ are inner functions associated to $f|_{U_k},\,\hat{f}|_{U_k},$ respectively, then $g_{f,\,k}$ is associated to $\hat{f}|_{U_k}$ and $g_{\hat{f},\,k}$ is associated to $f|_{U_k}.$ 
		\end{enumerate}	 
	\end{rmk}
	\section{Infinite-degree case (Proof of Theorem \ref{const})} \label{inf}
	In this section we prove Theorem \ref{const} and Corollary \ref{infpoly}. In the proof we use \cite[Theorem 6.1]{ERS20}, which we state below for completeness. 
	\begin{thm}[{\protect\cite[Theorem~6.1]{ERS20}}] \label{ers}
		Suppose that $f$ is an entire function and $U$ is an unbounded forward-invariant Fatou component on which $f$ has infinite valence, but such that $f^{-1}(a)\cap U$ contains exactly $p$ points, counting multiplicity, for some $a\in U$ and $p\ge 0.$ Assume also that an inner function dynamically associated with $f|_U$ has a finite number $q\ge1$ of singularities on $\partial\D.$ Then $f$ has a dynamically associated inner function of the form $$g:\D\to\D;\;\;\; z\mapsto B(z)\exp\bigg(-\sum_{j=1}^{q}\Big(c_j\frac{e^{i\theta_j}+z}{e^{i\theta_j}-z}\Big)\bigg),$$ for some finite Blaschke product $B$ of degree $p,$ real numbers $\theta_1,\dots,\,\theta_q,$ and positive real numbers $c_1,\dots,\,c_q.$
	\end{thm}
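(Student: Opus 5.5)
The plan is to pick the Riemann map so that the point $a$ corresponds to $0\in\D$. Then the zeros of the dynamically associated inner function are exactly the (finitely many) preimages of $a$. Canonical factorisation of inner functions then splits off a finite Blaschke product, and the remaining singular factor is forced to be supported on the $q$ boundary singularities.

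\textbf{Step 1 (normalisation).} Let $\psi:\D\to U$ be a Riemann map and $g=\psi^{-1}\circ f\circ\psi$ an inner function dynamically associated to $f|_U$ with $q$ singularities on $\partial\D$. Let $M$ be the disc automorphism with $M(\psi^{-1}(a))=0$. I replace $\psi$ by $\tilde\psi=\psi\circ M^{-1}$, so that $\tilde g=M\circ g\circ M^{-1}=\tilde\psi^{-1}\circ f\circ\tilde\psi$ is still dynamically associated to $f|_U$. Since $M$ extends analytically across $\partial\D$, $\tilde g$ has exactly the $q$ singularities $M(\zeta_1),\dots,M(\zeta_q)$. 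Moreover $\tilde g(z)=0$ if and only if $f(\tilde\psi(z))=a$, with multiplicities preserved because $\tilde\psi$ is conformal. Hence $\tilde g$ has exactly $p$ zeros in $\D$, counting multiplicity.

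\textbf{Step 2 (factorisation).} Let $B$ be the finite Blaschke product of degree $p$ with these zeros. Then $S:=\tilde g/B$ is a zero-free inner function, since $|B|=1$ on $\partial\D$ and radial limits of $\tilde g$ have modulus one a.e. By the canonical factorisation,
\[
S(z)=e^{i\alpha}\exp\Big(-\int_{\partial\D}\frac{\zeta+z}{\zeta-z}\,d\mu(\zeta)\Big)
\]
for some positive singular measure $\mu$. The rotation $e^{i\alpha}$ is absorbed into $B$.

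\textbf{Step 3 (identifying $\mu$).} This is the main obstacle: showing that $\mu$ is purely atomic with atoms exactly at the $q$ singularities. I will use the standard characterisation that an inner function extends analytically across an open arc $I\subset\partial\D$ if and only if its zeros do not accumulate on $I$ and its singular measure vanishes on $I$.
\begin{itemize}
\item Since $B$ is analytic and nonvanishing near $\partial\D$, the singularities of $S$ coincide with those of $\tilde g$.
\item Applying the characterisation on each complementary arc shows $\mu(\partial\D\setminus\{\xi_1,\dots,\xi_q\})=0$, where $\xi_j=M(\zeta_j)$. Thus $\mu=\sum_j c_j\delta_{\xi_j}$ with $c_j\ge0$.
\item If some $c_j=0$, then $S$ would extend analytically across a neighbourhood of the isolated point $\xi_j$, contradicting that $\xi_j$ is a singularity. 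So every $c_j>0$.
\end{itemize}
Writing $\xi_j=e^{i\theta_j}$ gives the stated form. The hypothesis of infinite valence is consistent with $q\ge1$: a finite Blaschke product alone would give finite degree.
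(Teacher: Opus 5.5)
Your proposal is correct and takes essentially the same route as the argument the paper relies on. The paper only cites this result from \cite{ERS20}, but its later remark that $\psi(0)=b$ (``see the proof of Theorem~\ref{ers}'') confirms the same normalisation $\psi(0)=a$. From there the argument is yours: the zeros of $g$ are the $p$ preimages of $a$, the finite Blaschke product splits off, and the singular measure must be a sum of positive point masses at the $q$ singularities.
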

	\begin{proof}[Proof of Theorem \ref{const}]
		The function $h$ satisfies the assumptions of Theorem \ref{ers}, with $b$ instead of $a,$ so $h|_G$ has a dynamically associated inner function of the form $$g_h(w)=B(w)\exp\Big(-\sum_{j=1}^{q}\Big(c_j\cdot\frac{e^{i\theta_j}+w}{e^{i\theta_j}-w}\Big)\Big),\,w\in\D,$$ where $B$ is a finite Blaschke product of degree $p,$ $\theta_j\in\R$ and $c_j>0,$ for all $j\in\{1,\dots,q\}.$ 
		
		First we show (i), so we assume that $0\in G$ (note that, in this case, $U=\pi_a^{-1}(G)$ is connected by Corollary \ref{liftgl}(i)). If $m=0,$ then $h(w)=e^{H(w)},$ and so 0 has no preimages in $G.$ Since $h|_G$ has infinite degree, there exists at most one point in $G$ with finitely many preimages in $G$ under $h,$ hence $b=0,\,p=0$ and $B(w)=e^{i\theta},$ for all $w\in\D,$ and some $\theta\in[0,2\pi).$ If $m\ge1,$ then $h^{-1}(\{0\})=\{0\}$ and $h$ has local degree $m$ at 0, hence we have again that $b=0,$ and so $p=m.$ Also, if $\psi:\D\to G$ is the Riemann map such that $g_h=\psi^{-1}\circ h\circ\psi:\D\to\D,$ then $\psi(0)=b=0$ (see the proof of Theorem \ref{ers}). Thus, $$g_h(w)=0 \;\Leftrightarrow\; h(\psi(w))=\psi(0)=0 \;\Leftrightarrow\; \psi(w)=0 \;\Leftrightarrow\; w=0,$$ so 0 is the only root of $B.$ This means that $B(w)=e^{i\theta}w^m,$ for all $w\in\D,$ and some $\theta\in[0,2\pi).$ 
		
		Now, recall that $U=\pi_a^{-1}(G)$ is a forward-invariant Fatou component of $f$ (see Remark \ref{scu}(1)). Let $\varphi:\mathbb{H}\to U$ be a Riemann map and consider the inner function $\tilde{g}_f=\varphi^{-1}\circ f\circ\varphi:\mathbb{H}\to\mathbb{H},$ which is dynamically associated to $f|_U.$ Consider, also, the function $\tilde{p}=\psi^{-1}\circ\pi_a\circ\varphi:\mathbb{H}\to\mathbb{D}^*,$ where $\mathbb{D}^*=\mathbb{D}\setminus\{0\}.$ Then $\tilde{p}$ is a universal covering map from $\mathbb{H}$ onto $\mathbb{D}^*,$ and $$g_h\circ\tilde{p}=g_h\circ\psi^{-1}\circ\pi_a\circ\varphi=\psi^{-1}\circ h\circ\pi_a\circ\varphi=\psi^{-1}\circ\pi_a\circ f\circ\varphi=\psi^{-1}\circ\pi_a\circ\varphi\circ\tilde{g}_f=\tilde{p}\circ\tilde{g}_f,$$ in $\mathbb{H}.$ Our goal is to calculate $\tilde{g}_f,$ but since we do not know $\tilde{p},$ we will calculate another inner function dynamically associated to $f|_U.$ Note that  $\hat{p}(z)=e^{2iz}$ is a universal covering map from $\mathbb{H}$ onto $\mathbb{D}^*,$ and recall that the universal covering map is unique up to isomorphism (see, for example, \cite[Remark 12.18]{zakeri}), which means that there exists a conformal automorphism $M:\mathbb{H}\to\mathbb{H}$ such that $\hat{p}=\tilde{p}\circ M.$ Then, it suffices to find the function $g_f=M^{-1}\circ\tilde{g}_f\circ M:\mathbb{H}\to\mathbb{H},$ which is conjugate to $\tilde{g}_f.$ See the diagram in Figure \ref{diagr} for a visual description of the above process. 
		\begin{figure}[h]
			\centering
			$$\begin{tikzcd} 
				\mathbb{H} \arrow[r, "g_f"] \arrow[d, "M"'] \arrow[to=dddd, bend right=60, "\hat{p}"'] & \mathbb{H} \\
				\mathbb{H} \arrow[r, "\tilde{g}_f"] \arrow[d, "\varphi"'] \arrow[to=ddd, bend right=40, "\tilde{p}"'] & \mathbb{H} \arrow[u, "M^{-1}"'] \\
				U \arrow[r, "f"] \arrow[d, "\pi_a"'] & U \arrow[d, "\pi_a"] \arrow[u, "\varphi^{-1}"'] \\
				G \arrow[r, "h"] & G \arrow[d, "\psi^{-1}"] \\
				\D \arrow[r, "g_h"] \arrow[u, "\psi"] & \D
			\end{tikzcd}$$
			\caption{Constructing the dynamically associated inner function $g_f$ to $f|_U$}
			\label{diagr}
		\end{figure}	
		
		Let $z\in\mathbb{H}.$ Then there exists a point $w\in\mathbb{H}$ such that $z=M^{-1}(w).$ Thus, we have that 
		\begin{equation} \nonumber
			\begin{split}
				g_h(\hat{p}(z))&=g_h(\hat{p}(M^{-1}(w)))=g_h(\tilde{p}(w))=\tilde{p}(\tilde{g}_f(w))=\tilde{p}(\tilde{g}_f(M(z)))\\&=\tilde{p}(M(g_f(z)))=\hat{p}(g_f(z)),
			\end{split}
		\end{equation}
		so 
		\begin{equation} \nonumber
			\begin{split}
				&g_h(e^{2iz})=e^{2ig_f(z)} \;\Leftrightarrow\; e^{i\theta}e^{2miz}\cdot\exp\Big(-\sum_{j=1}^{q}\Big(c_j\cdot\frac{e^{i\theta_j}+e^{2iz}}{e^{i\theta_j}-e^{2iz}}\Big)\Big)=e^{2ig_f(z)}\\ \Leftrightarrow\;\;&2ig_f(z)=i\theta+2miz-\sum_{j=1}^{q}\Big(c_j\cdot\frac{e^{i\theta_j}+e^{2iz}}{e^{i\theta_j}-e^{2iz}}\Big)+2k_z\pi i,\,k_z\in\Z \\ \Leftrightarrow\;\; &g_f(z)=\frac{\theta}{2}+k_z\pi+mz+\frac{i}{2}\cdot\sum_{j=1}^{q}\Big(c_j\cdot\frac{e^{i\theta_j}+e^{2iz}}{e^{i\theta_j}-e^{2iz}}\Big),\,k_z\in\Z,
			\end{split}		
		\end{equation}
		where $k_z$ depends on $z$ and takes its values in $\Z.$ Since $k_z$ is also a continuous function of $z,$ it has to be constant, so $$g_f(z)=\sigma+mz+\frac{i}{2}\cdot\sum_{j=1}^{q}\Big(c_j\cdot\frac{e^{i\theta_j}+e^{2iz}}{e^{i\theta_j}-e^{2iz}}\Big),\,z\in\mathbb{H},$$ where $\sigma=\frac{\theta}{2}+k\pi\in\R,$ for some $k\in\Z.$
		
		Regarding (ii), let $0\notin G$ and $\{U_k\}_{k\in\Z}$ be the components of $\pi_a^{-1}(G)$ (see Corollary \ref{liftgl}(ii)). Recall that $\pi_a$ is conformal in each $U_k$ (see comment in Remark \ref{scu}(1)), and fix $k\in\Z.$ By Remark \ref{scu}(2), there exists a function $\hat{f}\in\mathcal{L}_{a,\,h}$ such that $\hat{f}(U_k)\subset U_k.$ Let $\varphi:\D\to U_k$ be a Riemann map and consider the inner function $g_{\hat{f},\,k}=\varphi^{-1}\circ\hat{f}\circ\varphi:\D\to\D,$ which is dynamically associated to $\hat{f}|_{U_k}.$ Also, let $\tilde{p}:=\psi^{-1}\circ\pi_a\circ\varphi:\D\to\D,$ and note that $\tilde{p}$ is a conformal automorphism of the unit disc. Working as in the proof of (i), we can show that $g_h\circ\tilde{p}=\tilde{p}\circ g_{\hat{f},\,k},$ so $g_h=\tilde{p}\circ g_{\hat{f},\,k}\circ\tilde{p}^{-1}=\psi^{-1}\circ\pi_a\circ\hat{f}\circ\pi_a^{-1}\circ\psi$ in $\D.$ This means that $g_h$ is an inner function dynamically associated to $\hat{f}|_{U_k},$ and hence, $g_h$ is associated to $f|_{U_k}$ by Remark \ref{wdpli}(2). 
	\end{proof}
	\begin{proof}[Proof of Corollary \ref{infpoly}]
		Since 0 has $m$ preimages in $G,$ counting multiplicity, it suffices to show that hypothesis (c) of Theorem \ref{const} holds. We can deduce from the form of $h$ that it has finitely many critical points, while \cite[Proposition 2]{K89} implies that 0 is the only finite asymptotic value of $h,$ so the set of singular values $S(h)$ of $h$ is finite. By doing some calculations, one can show that the order of $h$ is $\rho(h)=\deg Q.$ Since $h$ has finitely many critical points, it follows from \cite[Theorem 1]{BE95} that every singularity over 0 is direct. By the Denjoy-Carleman-Ahlfors Theorem (see, for example, \cite[p. 357]{BE95}), $h$ has at most $2\cdot\deg Q$ direct singularities. For transcendental entire functions, $\infty$ is a direct singularity, and the number of singularities over $\infty$ is at least the number of singularities over points in $\C$ (see \cite[p. 6]{E21}). From this, we conclude that there exist at most $\deg Q$ singularities over 0, and hence, there are at most $\deg Q$ asymptotic paths in $\C$ (up to homotopy) on which $h$ converges to 0. By \cite[Theorem 1.2]{ERS20}, a dynamically associated inner function to $h|_G$ has $1\le q\le\deg Q$ singularities, so hypotheses (a)-(c) of Theorem \ref{const} are satisfied. Since $U=\pi_a^{-1}(G)$ is connected due to Corollary \ref{liftgl}(i), the result follows.
	\end{proof}
	\section{Finite-degree case (Proof of Theorem \ref{uds})} \label{fin}
		In this section we give the proofs of Theorem \ref{uds} and Corollary \ref{finpoly}. For the proof of Theorem \ref{uds} we distinguish between the cases $0\in G$ and $0\notin G,$ as usual. 
		\begin{proof}[Proof of Theorem \ref{uds}]
		Suppose $0\in G$ and note that, in this case, $U=\pi_a^{-1}(G)$ is connected by Corollary \ref{liftgl}(i). The result follows from Proposition \ref{degrees0} and \cite[Proposition 1.1]{ERS20}. Recall that $U$ is forward-invariant (see Remark \ref{scu}(1)). 
		
		Now, assume that $0\notin G,$ and let $\{U_k\}_{k\in\Z}$ be the components of $\pi_a^{-1}(G)$ (see Corollary \ref{liftgl}(ii)). Working exactly as in the proof of Theorem \ref{const}(ii), we can show that $g_h$ is associated to $f|_{U_k},$ for each $k\in\Z.$ Note that the Fatou components $\{U_k\}_{k\in\Z}$ can be forward-invariant, eventually forward-invariant, or wandering domains for $f$ (see Remark \ref{wdpli}(1)). Also, $g_h$ is a finite Blaschke product of degree $d$ by \cite[Proposition 1.1]{ERS20}.
	\end{proof}
	\begin{proof}[Proof of Corollary \ref{finpoly}]
		By \cite[Proposition 2]{K89}, 0 is the only finite asymptotic value of $h,$ and it does not belong to $G.$ Also, it follows from the form of $h$ that it has $m+\deg Q-1$ critical points, counting multiplicity. Thus, \cite[Proposition 2.8]{BerFR15} yields that $h|_G$ is a proper map and has finite degree, $d=m+\deg Q.$ Indeed, if $m=0$ and $\deg Q=1,$ then $h$ does not have critical points, so \cite[Corollary 12.34]{zakeri} implies that $h|_G: G\to G$ is a covering map. Then $d=1=m+\deg Q,$ by the Riemann-Hurwitz formula (see, for example, \cite[Theorem 12.47]{zakeri}). If $m+\deg Q-1\ge1,$ then let $c_1,\dots,c_k$ be the critical points of $h,$ with multiplicities $m_1,\dots,m_k,$ respectively, where $k\in\N$ and $m_1+\dots+m_k=m+\deg Q-1$ (the multiplicities are considered with respect to the equation $h'(w)=0$). Since the local degree of the critical point $c_j$ is $m_j+1,$ for $j=1,\dots,k,$ the Riemann-Hurwitz formula implies that $d=m+\deg Q.$ The rest follows from Theorem \ref{uds}(ii).
	\end{proof}
	\section{Examples} \label{exampl}
	We first introduce some notation we are going to use in the examples. In particular, if $f$ is a transcendental entire function, by $S(f)$ we denote the set of its singular values (more details on singular values can be found, for example, in \cite[Section 4.3]{Ber93}, \cite[p. 355-357]{BE95}, \cite{E21}). Then, we can consider two important classes of transcendental entire functions, namely:
	$$\mathcal{S}=\{f \text{ transcendental entire: } S(f) \text{ is finite}\},$$ which was studied by Nevanlinna and others, and $$\mathcal{B}=\{f \text{ transcendental entire: } S(f) \text{ is bounded}\}.$$ The latter class was introduced by Eremenko and Lyubich in \cite{EL92}, where they obtained important results for both of these classes. 
	
	We begin with two examples of the infinite-degree case. In both examples, the functions satisfy the hypotheses of Proposition \ref{genrem}(c), so they have a hyperbolic Baker domain. The first example is a function studied in \cite[p. 69]{RS99} and \cite[Section 4]{RRS10}.
	\begin{eg} \label{ex1}
		Let $f(z)=2z+e^{-z},\,z\in\C,$ and $U$ be the Baker domain of $f.$ Then $g_f(z)=2z-\frac{c}{2}\cdot\cot z,\,z\in\mathbb{H},$ where $c>0,$ is an inner function dynamically associated to $f|_U.$ 
	\end{eg}
	\begin{proof}
		Using \eqref{berl} with $\pi_{-1}(z)=e^{-z},\,z\in\C,$ we find that $f$ is a lift of the function $$h(w)=w^2e^{-w},\,w\in\C.$$  
		
		We will first study the dynamics of $h.$ We have that $$h'(w)=2we^{-w}-w^2e^{-w},\,w\in\C,$$ so 0 is a super-attracting fixed point of $h$ (and it is the only real fixed point of $h$). Let $G\subset F(h)$ be the immediate super-attracting basin of 0. By studying the function $x\mapsto h(x)-x,\,x\in\R,$ we can deduce that $h(x)<x,$ for all $x>0.$ Also, $h(x)>0,$ for all $x\in\R\setminus\{0\}.$ It follows that $h^n(x)\to0$ as $n\to\infty,$ for all $x\in\R,$ and so $\R\subset G.$ 
		
		Next, we have that $$h'(w)=0 \;\Leftrightarrow\; w=0 \;\text{ or }\; w=2,$$ so $h(0)=0$ and $h(2)=4e^{-2}$ are the only critical values of $h$ and they both belong to $G.$ By \cite[Proposition 2]{K89}, 0 is the only finite asymptotic value of $h,$ hence $S(h)=\{0,4e^{-2}\}\subset G$ and $h\in\mathcal{S}\subset\mathcal{B}.$ It follows from \cite[Theorem 1.7]{EFXS19} that $G=F(h)$ and $h$ is of disjoint type. One can further observe that $J(h)$ is a Cantor bouquet, since $h$ is of disjoint type and has order 1 (see \cite[Theorem 1.5]{BJR12}), and that $G$ is symmetric with respect to the real axis, because $h(\bar{w})=\overline{h(w)},$ for all $w\in\C$ (see Figure \ref{eg1h}).
		\begin{figure}[h]
			\centering
			\begin{subfigure}{0.45\textwidth}
				\centering
				\includegraphics[width=\linewidth]{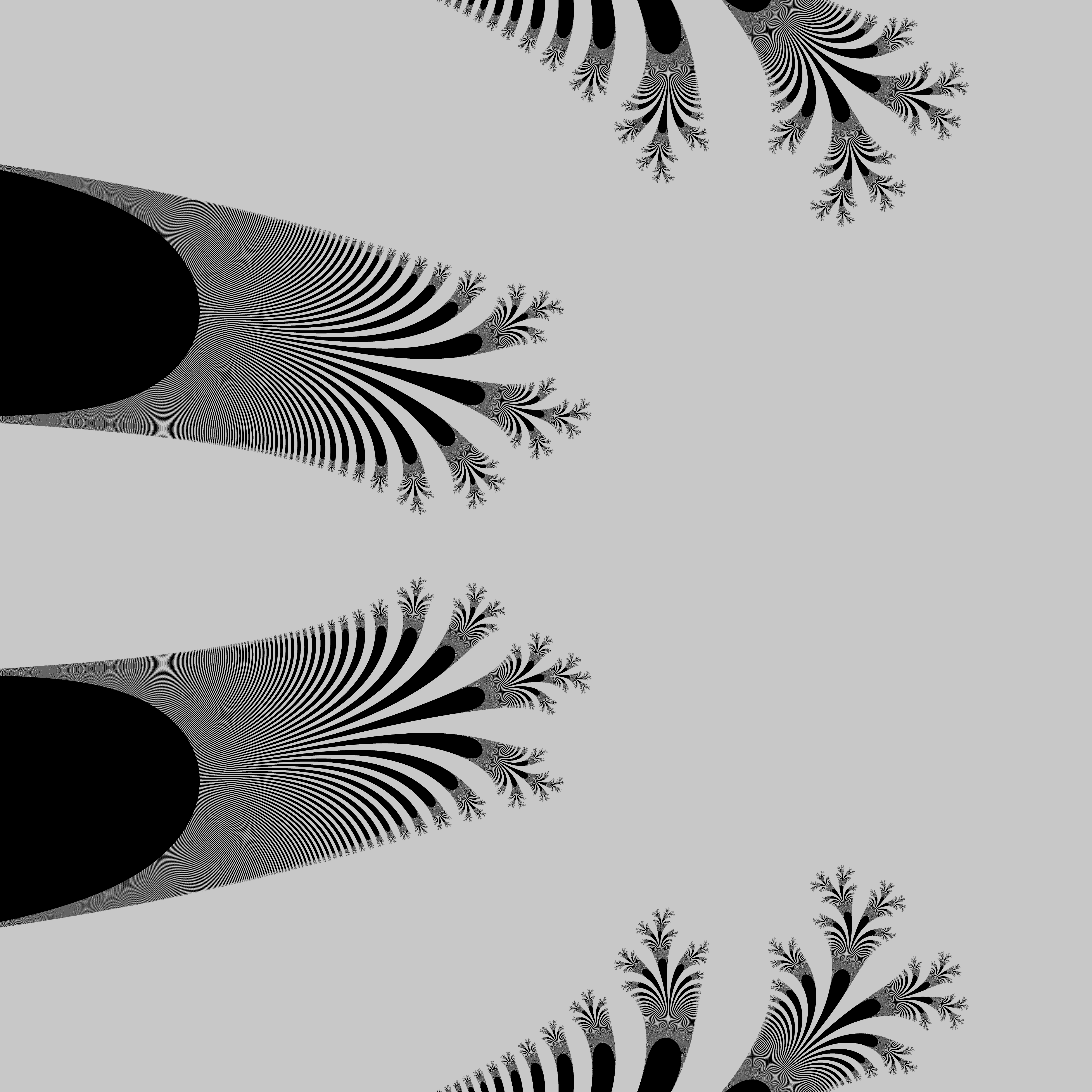}
				\caption{(a) The Fatou (grey) and the Julia set (black) of $h.$}
				\label{eg1h}
			\end{subfigure}
			\hfill
			\begin{subfigure}{0.45\textwidth}
				\centering
				\includegraphics[width=\linewidth]{"julia_set"}
				\caption{(b) The Fatou (grey) and the Julia set (black) of $f.$}
				\label{eg1f}
			\end{subfigure}
			\caption{The dynamical planes of the function $h(w)=w^2e^{-w}$ and of its lift $f(z)=2z+e^{-z}.$}
		\end{figure}	
		
		We will now describe some of the dynamical properties of $f.$ By \eqref{fh}, $U:=\pi_{-1}^{-1}(G)=F(f),$ and the definition of $U$ implies that $U+2\pi i=U.$ Also, Corollary \ref{liftgl}(i) yields that $U$ is connected, so $f|_U$ has infinite degree. It follows from Remark \ref{scu}(1) and Proposition \ref{genrem}(c) that $U$ is a forward-invariant hyperbolic Baker domain, while the fact that $f(\bar{z})=\overline{f(z)},$ for all $z\in\C,$ yields that $U$ is symmetric with respect to the real axis. Note that, by using \eqref{fh}, we can show that $x+k\pi i\in F(f),$ for all $x\in\R$ and $k\in\Z.$ 
		
		Next, if $z=x+iy\in\mathbb{C},$ with $x>0.6,$ then Re$f(z)=2x+e^{-x}\cos y\ge2x-e^{-x}>x.$ Thus, $f(P)\subset P,$ where $P:=\{z\in\C: \textnormal{Re}z>0.6\},$ and so, $P\subset F(f)=U.$ Also, working as in the proof of \cite[Lemma 2.7]{KU}, we can show that there exists $\delta\in(0,\frac{\pi}{3}]$ such that
		\begin{equation} \nonumber
			\begin{aligned}
				J(f)\subset\bigcup_{k\in\Z}\Big(&\{z\in\C: \text{Re}z\le0.6,\,2k\pi+\delta<\text{Im}z<(2k+1)\pi\} \\ &\cup\{z\in\C: \text{Re}z\le0.6,\,(2k+1)\pi<\text{Im}z<2(k+1)\pi-\delta\}\Big),
			\end{aligned}
		\end{equation}
		and $J(f)$ consists of disjoint curves tending to $\infty,$ each homeomorphic to $[0,+\infty),$ by \cite[Theorem 1.3]{RRS10} (see Figure \ref{eg1f}).  
				
		Note that $h|_G$ has infinite degree, and hence it satisfies the hypotheses of Corollary \ref{infpoly}. Thus, a dynamically associated inner function to $f|_U$ is 
		\begin{equation} \label{ex1g}
			\hat{g}_f(z)=\sigma+2z+\frac{ic}{2}\cdot\frac{e^{i\theta}+e^{2iz}}{e^{i\theta}-e^{2iz}},\,z\in\mathbb{H},
		\end{equation}
		where $\sigma,\,\theta\in\R$ and $c>0.$ 
		
		Next, we calculate $\sigma$ and $\theta.$ In order to do this, we make some important observations regarding the proof of Theorem \ref{const}(i), following an idea in the proof of \cite[Theorem 1.9]{ERS20}. In particular, we show that, for an appropriate choice of the Riemann map $\varphi:\mathbb{H}\to U,$ used in the proof of Theorem \ref{const}(i),  
		\begin{equation} \label{varphi}
			\varphi^{-1}(z)-\varphi^{-1}(z+2\pi i)=\pi,\text{ for all } z\in U.
		\end{equation}
		
		Let $\alpha>0.$ Without loss of generality, we may assume that $\varphi(i\alpha)=0$ and $i\varphi'(i\alpha)>0.$ Also, for any $\xi>0$ and $k\in\Z,$ consider the geodesic rays
		\begin{equation} \label{vertr}
			l_{k,\,\xi}^+:=\{z\in\C: \text{Re}z=k\pi,\,\text{Im}z>\xi\} \text{ and } l_{k,\,\xi}^-:=\{z\in\C: \text{Re}z=k\pi,\,0<\text{Im}z<\xi\},
		\end{equation}
		and the hyperbolic geodesics 
		\begin{equation} \label{vertgeo}
			l_k:=\{z\in\mathbb{H}: \text{Re}z=k\pi\}=l_{k,\,\xi}^+\cup l_{k,\,\xi}^-\cup\{k\pi+i\xi\}
		\end{equation}
	    in $\mathbb{H},$ as well as the rays $$\eta_k^+:=\{z\in\C: \text{Re}z>0,\,\text{Im}z=2k\pi\} \text{ and } \eta_k^-:=\{z\in\C: \text{Re}z<0,\,\text{Im}z=2k\pi\},$$ and the horizontal lines $$\eta_k:=\{z\in\C: \text{Im}z=2k\pi\}=\eta_k^+\cup \eta_k^-\cup\{2k\pi i\}$$ in $U.$ By the symmetry of $U$ with respect to the real axis, we have that 	
		\begin{equation} \label{phi}
			\varphi(l_{0,\,\alpha}^+)=(0,+\infty)=\eta_0^+ \text{ and } \varphi(l_{0,\,\alpha}^-)=(-\infty,0)=\eta_0^-
		\end{equation}
		(see Figure \ref{mhyp}). 
		\begin{figure}[h]
			\centering
			\includegraphics[width=0.85\linewidth]{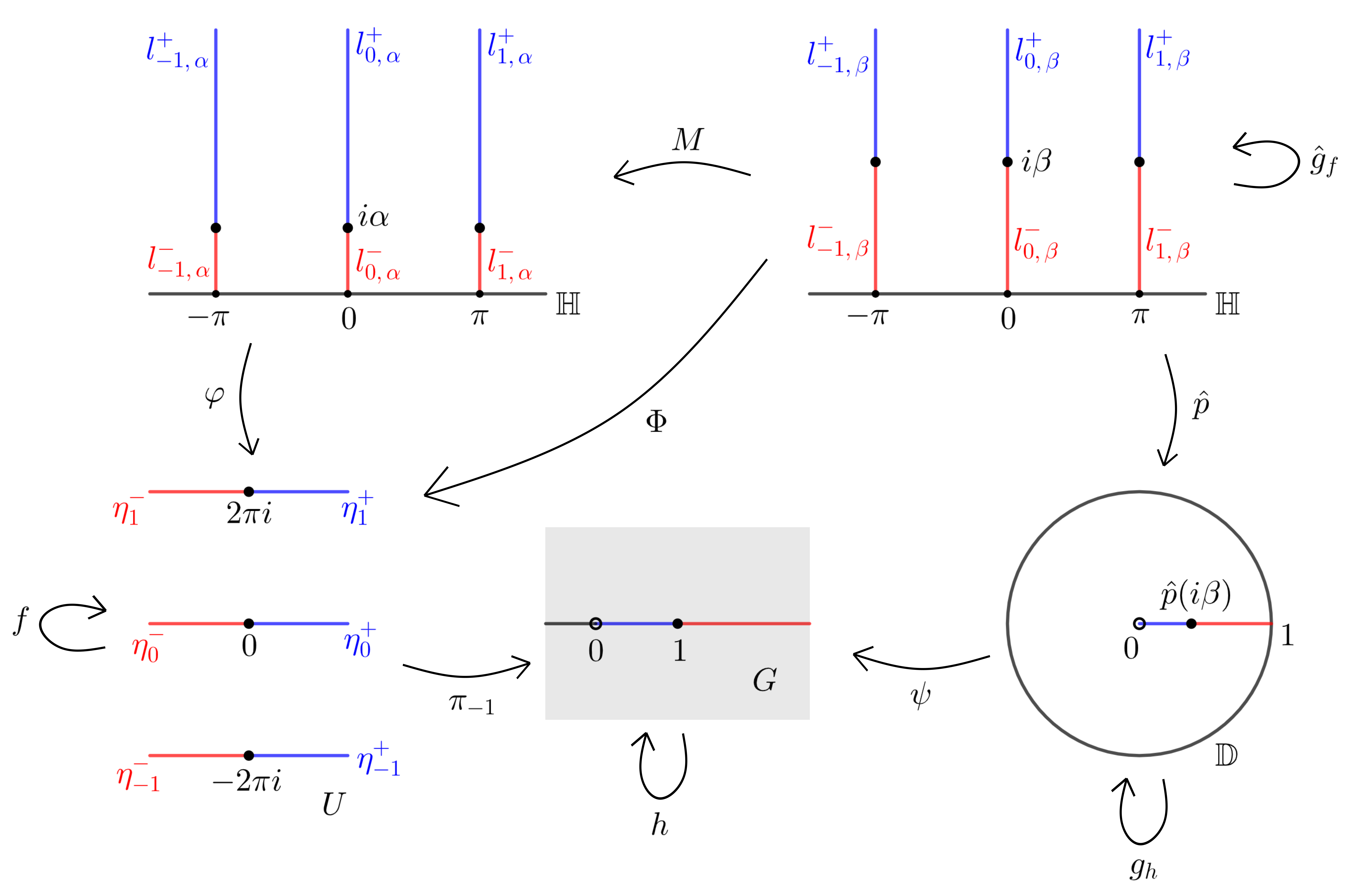}
			\caption{}
			\label{mhyp}
		\end{figure}
		Since $l_0$ is a hyperbolic geodesic in $\mathbb{H},$ we obtain that $\eta_0=\varphi(l_0)$ is a hyperbolic geodesic in $U.$ 
		
		We will show that each line $\eta_k,\,k\in\Z,$ is a hyperbolic geodesic in $U.$ First, denote by $\rho_U$ the hyperbolic density in $U.$ By the fact that $U+2\pi i=U,$ we have that
		\begin{equation} \label{hyptr}
			\rho_U(z)=\rho_U(z+2\pi i), \text{ for all } z\in U.
		\end{equation}
		Let $z_1,\,z_2\in U,$ and $\gamma:[0,1]\to U$ be the geodesic arc in $U$ from $z_1$ to $z_2.$ Using \eqref{hyptr}, we can show that $\gamma+2\pi i$ and $\gamma-2\pi i$ are the geodesic arcs in $U$ from $z_1+2\pi i$ to $z_2+2\pi i,$ and from $z_1-2\pi i$ to $z_2-2\pi i,$ respectively. Combining this with the fact that $\eta_0$ is a hyperbolic geodesic in $U,$ we conclude that the lines $\eta_k,\,k\in\Z,$ are hyperbolic geodesics in $U.$
				
		Next, we determine the shape of the hyperbolic geodesics $\varphi^{-1}(\eta_k),\,k\in\Z,$ in $\mathbb{H}.$ The geodesic rays $\eta_k^+,\,k\in\Z,$ belong to the same access to infinity from $U$ as $\eta_0^+,$ whereas the geodesic rays $\eta_k^-,\,k\in\Z,$ belong to distinct accesses to infinity from $U,$ different from the access to which $\eta_0^+$ belongs. Since $\varphi^{-1}(\eta_0^+)=l_{0,\,\alpha}^+,$ \cite[Correspondence Theorem, p. 1838]{BFXK17} implies that $\varphi^{-1}(\eta_k^+)$ lands at $\infty,$ for each $k\in\Z,$ while $\varphi^{-1}(\eta_k^-),\,k\in\Z,$ land at distinct points in $\R.$ It follows that the hyperbolic geodesics $\varphi^{-1}(\eta_k),\,k\in\Z,$ are vertical lines in $\mathbb{H}.$
		
		Now, consider the Riemann map $\Psi: U\to\mathbb{H},$ given by $\Psi(z)=-\overline{\varphi^{-1}(\bar{z})},\,z\in U.$ For all $x\in\R,\,\Psi(x)=-\overline{\varphi^{-1}(\bar{x})}=-\overline{\varphi^{-1}(x)}=\varphi^{-1}(x),$ because $\varphi^{-1}(\eta_0)=l_0.$ By the Identity Theorem, $\varphi^{-1}(z)=\Psi(z)=-\overline{\varphi^{-1}(\bar{z})},$ for all $z\in U,$ which means that, for each $z\in U,$ the points $\varphi^{-1}(z)$ and $\varphi^{-1}(\bar{z})$ are symmetrically located with respect to $l_0$ in $\mathbb{H}.$ Also, it follows from the definition of hyperbolic density that 
		\begin{equation} \label{hypcon}
			\rho_U(z)=\rho_U(\bar{z}), \text{ for all } z\in U.
		\end{equation} 
		
		The last step in the proof of \eqref{varphi} is to show that the function $$z\mapsto\varphi^{-1}(z)-\varphi^{-1}(z+2\pi i)$$ is constant in $U.$ Let $x\in\R,\,\gamma:[0,1]\to U$ be the geodesic arc in $U$ from $x-2\pi i$ to $x,$ and consider the curve $\hat{\gamma}(t)=\overline{\gamma(1-t)},\,t\in[0,1],$ which joins $x$ to $x+2\pi i$ in $U.$ Then \eqref{hypcon} implies that $\hat{\gamma}$ is the geodesic arc in $U$ from $x$ to $x+2\pi i.$ Since $\gamma+2\pi i$ is the geodesic arc in $U$ from $x$ to $x+2\pi i,$ its uniqueness gives
		\begin{equation} \label{symtr}
			\hat{\gamma}([0,1])=\gamma([0,1])+2\pi i.
		\end{equation} 
		It follows that 
		\begin{equation} \label{impart}
			\text{Im}(\varphi^{-1}(x-2\pi i))=\text{Im}(\varphi^{-1}(x))=\text{Im}(\varphi^{-1}(x+2\pi i)).
		\end{equation}
		Indeed, let $\omega$ be the angle between $\eta_{-1}$ and $\gamma$ at the point $x-2\pi i.$ By \eqref{symtr} and the definition of $\hat{\gamma},$ we have that the angle between $\gamma$ and $\eta_0$ at $x,$ the angle between $\eta_0$ and $\hat{\gamma}$ at $x,$ and the one between $\hat{\gamma}$ and $\eta_1$ at $x+2\pi i$ are all equal to $\omega$ (see Figure \ref{angles}). Now, by what we have shown in the two previous paragraphs, the hyperbolic geodesics $\eta_{-1}$ and $\eta_1$ are mapped under $\varphi^{-1}$ onto vertical hyperbolic geodesics in $\mathbb{H},$ which are symmetric with respect to $l_0.$ It is easy to see that $\varphi^{-1}(\gamma)$ and $\varphi^{-1}(\hat{\gamma})$ are circular geodesic arcs in $\mathbb{H},$ symmetric with respect to $l_0$ (see also Figure \ref{angles}). Then, the preservation of angles by $\varphi$ gives \eqref{impart}. Next, we can deduce that $\text{Re}(\varphi^{-1}(2\pi i))<0,$ since $\varphi$ preserves angles and $i\varphi'(i\alpha)>0.$ Set $\kappa=-\text{Re}(\varphi^{-1}(2\pi i))>0.$ Then $\varphi^{-1}(\eta_{-1})$ and $\varphi^{-1}(\eta_1)$ land at $\kappa$ and $-\kappa,$ respectively. This fact, combined with \eqref{impart}, yields that $\varphi^{-1}(x)-\varphi^{-1}(x+2\pi i)=\kappa.$ The latter equality holds for an arbitrarily chosen point $x\in\R,$ so it holds for all $x\in\R.$ By the Identity Theorem, 
		$$\varphi^{-1}(z)-\varphi^{-1}(z+2\pi i)=\kappa,\text{ for all } z\in U.$$
		\begin{figure}[h]
			\centering
			\includegraphics[width=0.95\linewidth]{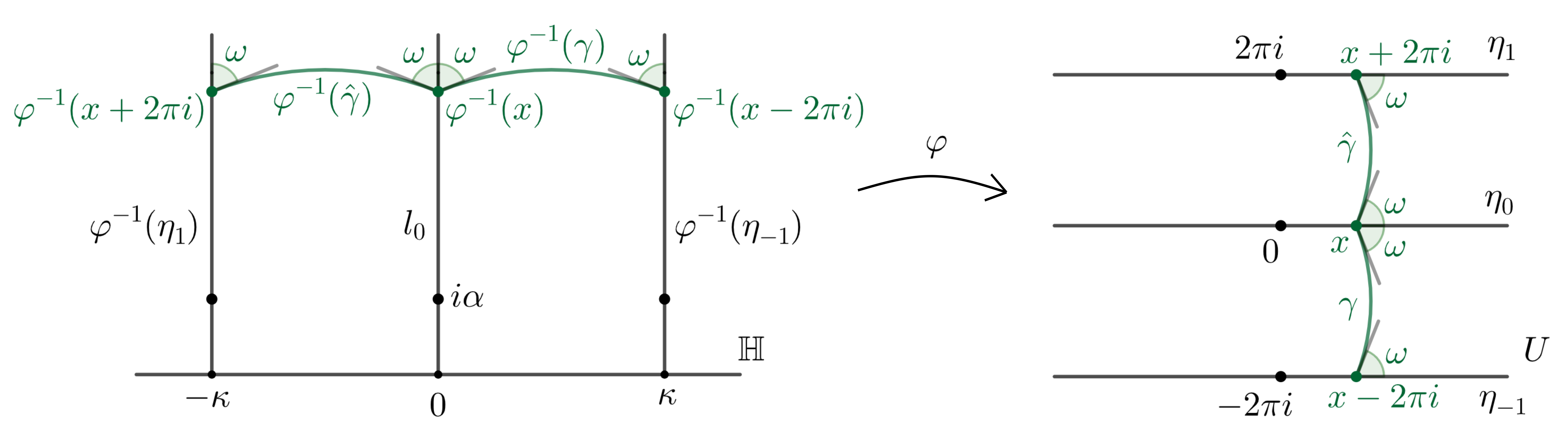}
			\caption{The action of $\varphi$ on $\mathbb{H}$}
			\label{angles}
		\end{figure}
		
		Having established the above results, we can choose $\alpha$ appropriately, so that $\kappa=\pi,$ and hence we obtain \eqref{varphi}. We are now ready to calculate $\sigma$ and $\theta.$ It follows from \eqref{varphi} that $\varphi(l_k)=\eta_{-k},$ with
		\begin{equation} \label{phigen}
			\varphi(k\pi+i\alpha)=-2k\pi i,\,\varphi(l_{k,\,\alpha}^+)=\eta_{-k}^+ \text{ and } \varphi(l_{k,\,\alpha}^-)=\eta_{-k}^-,
		\end{equation}
		for all $k\in\Z$ (see Figure \ref{mhyp}). Also, the Riemann map $\psi:\D\to G,$ from the proof of Theorem \ref{const}, satisfies $\psi(0)=0,$ and we can assume that $\psi'(0)>0,$ so that we have
	    \begin{equation} \label{psi}
	    	\psi((0,\psi^{-1}(1)))=(0,1) \text{ and } \psi((\psi^{-1}(1),1))=(1,+\infty),
	    \end{equation}
	    due to the symmetry of $G$ with respect to the real axis. Let $\beta>0$ be such that $\hat{p}(i\beta)=\psi^{-1}(1),$ where $\hat{p}(z)=e^{2iz},\,z\in\mathbb{H}.$ By the definition of $\hat{p},$
	    \begin{equation} \label{hatp}
	    	\hat{p}(l_{k,\,\beta}^+)=(0,\psi^{-1}(1)) \text{ and } \hat{p}(l_{k,\,\beta}^-)=(\psi^{-1}(1),1),
	    \end{equation}
	    and
	    \begin{equation} \label{hatplim}
	    	\lim_{t\to+\infty}\hat{p}(k\pi+it)=0 \text{ and } \lim_{t\to0^+}\hat{p}(k\pi+it)=1,
	    \end{equation}
	    for all $k\in\Z$ (see Figure \ref{mhyp}). Recall that $M$ is the conformal automorphism of $\mathbb{H}$ such that
	    \begin{equation} \label{defm}
	    	\hat{p}=\psi^{-1}\circ\pi_{-1}\circ\varphi\circ M \text{ in } \mathbb{H}.
	    \end{equation}
	    By \eqref{phigen}-\eqref{defm} and the fact that $M$ maps hyperbolic geodesics in $\mathbb{H}$ onto hyperbolic geodesics in $\mathbb{H},$ we conclude that for each $k\in\Z,$ there exists $j_k\in\Z$ such that $M(l_{j_k,\,\beta}^+)=l_{k,\,\alpha}^+$ and $M(l_{j_k,\,\beta}^-)=l_{k,\,\alpha}^-.$ 
		
		Now, let $\Phi:=\varphi\circ M:\mathbb{H}\to U.$ Then $\hat{g}_f=\Phi^{-1}\circ f\circ\Phi$ in $\mathbb{H}.$ By the above discussion, there exists $j_0\in\Z$ such that
		\begin{equation} \label{j0}
			\Phi(l_{j_0,\,\beta}^+)=(0,+\infty) \text{ and } \Phi(l_{j_0,\,\beta}^-)=(-\infty,0),
		\end{equation} 
		with $\lim_{t\to0^+}\Phi(j_0\pi+it)=-\infty$ and $\lim_{t\to+\infty}\Phi(j_0\pi+it)=+\infty.$ Note that $f(\R)\subset\R,$ with $\lim_{x\to-\infty}f(x)=+\infty,$ hence $$\lim_{t\to0^+}\hat{g}_f(j_0\pi+it)=\lim_{t\to0^+}\Phi^{-1}\circ f\circ\Phi(j_0\pi+it)=\infty.$$ This shows that $j_0\pi$ is a pole of $\hat{g}_f.$ By \eqref{ex1g}, we can deduce that the real poles of $\hat{g}_f$ are the points $z\in\R$ for which $e^{i\theta}-e^{2iz}=0,$ or equivalently, the points $\frac{\theta}{2}+\lambda\pi,\,\lambda\in\Z.$ Thus, $j_0\pi=\frac{\theta}{2}+\lambda\pi,$ for some $\lambda\in\Z,$ hence $\theta=2(j_0-\lambda)\pi,$ and so  $$\hat{g}_f(z)=\sigma+2z+\frac{ic}{2}\cdot\frac{1+e^{2iz}}{1-e^{2iz}}=\sigma+2z-\frac{c}{2}\cdot\cot z,\,z\in\mathbb{H}.$$ Also, $\hat{g}_f(l_{j_0})=\Phi^{-1}\circ f\circ\Phi(l_{j_0})\subset l_{j_0},$ because of \eqref{j0} and the fact that $f(\R)\subset\R,$ and note that $\hat{g}_f(j_0\pi+it)=\sigma+2j_0\pi+2it+\frac{ic}{2}\cdot\frac{1+e^{-2t}}{1-e^{-2t}},$ for all $t\in(0,+\infty).$ Since $\text{Re}\,\hat{g}_f(j_0\pi+it)=j_0\pi,$ for all $t>0,$ we deduce that $\sigma=-j_0\pi.$ By considering the conjugate $g_f:=\hat{M}^{-1}\,\hat{g}_f\,\hat{M}$ to $\hat{g}_f,$ where $\hat{M}(z)=z+j_0\pi,\,z\in\mathbb{H},$ we have $$g_f(z)=2z-\frac{c}{2}\cdot\cot z,$$ for all $z\in\mathbb{H},$ and $g_f$ is a dynamically associated inner function to $f|_U.$ 		
	\end{proof}
	\begin{obs}
		Note that $f'(z)=2-e^{-z},\,z\in\mathbb{C},$ hence $$f'(z)=0 \;\Leftrightarrow\; z=-\log2+2k\pi i\in U,\,k\in\Z,$$ and the critical values of $f$ are $f(-\log2+2k\pi i)=2-2\log2+4k\pi i\in U,\,k\in\Z.$ Thus, $S(f)\cap U$ is not compact, so we cannot use \cite[Theorem 1.2]{ERS20} to find the number of singularities of an inner function dynamically associated to $f|_U.$ Moreover, the technique used in the proof of \cite[Theorem 1.9]{ERS20} cannot be applied, mainly due to the fact that we cannot find the fixed points of $f,$ or the hyperbolic distance between these points.
	\end{obs}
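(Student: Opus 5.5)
The plan is to verify each claim of the observation by direct computation, using the lifting relation $h\circ\pi_{-1}=\pi_{-1}\circ f$ with $h(w)=w^2e^{-w}$ and the description $U=\pi_{-1}^{-1}(G)=F(f)$ obtained in the proof of Example~\ref{ex1}.

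\textbf{Critical points.} Since $f'(z)=2-e^{-z}$, we have $f'(z)=0$ if and only if $e^{-z}=2$. This holds exactly when $-z=\log 2+2k\pi i$ for some $k\in\Z$. Replacing $k$ by $-k$, the critical points are $-\log2+2k\pi i$, $k\in\Z$.

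To see that they lie in $U$, note that $\pi_{-1}(-\log 2+2k\pi i)=e^{\log 2-2k\pi i}=2\in\R$. In the proof of Example~\ref{ex1} we showed that $\R\subset G$, so these points lie in $\pi_{-1}^{-1}(G)=U$. Alternatively, one can use the fact, noted there, that $x+k\pi i\in F(f)$ for all $x\in\R$ and $k\in\Z$.

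\textbf{Critical values.} A direct substitution gives
$$f(-\log2+2k\pi i)=-2\log2+4k\pi i+e^{\log2-2k\pi i}=2-2\log2+4k\pi i.$$
These values lie in $U$ by the same argument. Their image under $\pi_{-1}$ is $e^{-(2-2\log 2)}=4e^{-2}$, which is the critical value $h(2)$ of $h$ and belongs to $G$. Equivalently, $f(U)\subset U$ since $U$ is forward-invariant by Remark~\ref{scu}(1).

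\textbf{Non-compactness.} The critical values $2-2\log 2+4k\pi i$, $k\in\Z$, form an infinite subset of $S(f)\cap U$. Their imaginary parts $4k\pi$ are unbounded, so this set is unbounded. Hence $S(f)\cap U$ is not contained in any compact subset of $U$.

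Since \cite[Theorem 1.2]{ERS20} requires $S(f)\cap U$ to be compactly contained in $U$, that theorem does not apply here. The final remark, about the method of \cite[Theorem 1.9]{ERS20}, is a heuristic comment and not a mathematical claim. I would justify it only by pointing out that this method relies on explicit knowledge of the fixed points of $f$ and of their hyperbolic distances. For $f(z)=2z+e^{-z}$, the fixed points solve $z=-e^{-z}$, a transcendental equation with no closed-form solutions.

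There is no substantial obstacle in this argument. The only point that needs care is membership of the critical points and values in $U$, and this follows directly from $\R\subset G$ together with the lifting relation.
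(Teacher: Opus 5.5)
Your proposal is correct and is essentially the paper's own inline argument: the same computation of $f'$, of the critical points $-\log 2+2k\pi i$ and of the critical values $2-2\log 2+4k\pi i$, with non-compactness coming from the unbounded imaginary parts $4k\pi$. The paper simply asserts that these points lie in $U$, and your check that $\pi_{-1}$ sends them to $2$ and $4e^{-2}$, both in $\R\subset G$, fills this in. One small quibble on the heuristic last remark: the fixed points solve $ze^{z}=-1$, so they are $W_k(-1)$ for the branches of the Lambert $W$ function. The sharper obstruction is therefore that $f(z+2\pi i)=f(z)+4\pi i$ prevents the fixed-point set from being $2\pi i$-periodic, which is exactly the structure exploited in the proof of \cite[Theorem 1.9]{ERS20}.
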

	\begin{rmk}
		Using the dynamically associated inner function to $h|_G,\,g_h,$  which appears in the proof of Theorem \ref{const}, one can show that $\psi'(0)=e^{-c}.$ Then, well-known hyperbolic estimates imply that $c\le0.6.$
	\end{rmk}
	
	The next example is a function that was studied in \cite[Example 3.6]{barg}.
	\begin{eg} \label{ex2}
		Let $f(z)=2z-3+e^z,\,z\in\C,$ and $U$ be the Baker domain of $f.$ Then $g_f(z)=2z+\frac{c}{2}\cdot\tan z,\,z\in\mathbb{H},$ where $c>0,$ is an inner function dynamically associated to $f|_U.$ 
	\end{eg}
	\begin{proof}
		Using \eqref{berl} with $\pi_1(z)=e^z,\,z\in\C,$ we obtain that $f$ is a lift of the function $$h(w)=w^2e^{w-3},\,w\in\C.$$ First we study $h.$ It is easy to see that it has exactly two real fixed points; $0\in F(h)$ is a super-attracting fixed point, and $x_0\in J(h)$ is a repelling fixed point, with $x_0\in(2.2,2.3)$ and $x_0e^{x_0}=e^3.$ The set of singular values of $h$ is $S(h)=\{0,4e^{-5}\},$ where 0 is both a critical value and a finite asymptotic value, and $4e^{-5}$ is a critical value. Since $S(h)$ is finite, $h\in\mathcal{S}\subset\mathcal{B}.$
		
		Note that if $z=x+iy\in\mathbb{C}$ with $x<0.7,$ then $$\text{Re}f(x+iy)=2x-3+e^x\cos y<x.$$ Thus, if $P:=\{z\in\mathbb{C}: \text{Re}z<0.7\},$ then $f(P)\subset P,$ and so $P\subset F(f).$ 
		
		Let $G\subset F(h)$ be the immediate super-attracting basin of 0. It follows from \eqref{fh} that $D(0,e^{0.7})=\pi_1(P)\cup\{0\}\subset G.$ Also, $4e^{-5}\in D(0,e^{0.7}),$ hence \cite[Theorem 1.7]{EFXS19} implies that $G=F(h)$ and $h$ is of disjoint type. One can further observe that $h(\bar{w})=\overline{h(w)},$ for all $w\in\C,$ so $G$ is symmetric with respect to the real axis, and $J(h)$ is a Cantor bouquet, since $h$ is of disjoint type and has order 1 (see \cite[Theorem 1.5]{BJR12}). Finally, by doing some calculations, one can show that $(-\infty,\,x_0)\subset F(h)=G,$ and $[x_0,+\infty)\subset J(h).$
		
		By \eqref{fh}, $U:=\pi_1^{-1}(G)=F(f),$ and working as in Example \ref{ex1}, we show that $U$ is a forward-invariant hyperbolic Baker domain, symmetric with respect to the real axis, $U+2\pi i=U,$ and $f|_U$ has infinite degree. Also, \eqref{fh} implies that $x+(2k+1)\pi i\in F(f),$ for all $x\in\R$ and $k\in\Z,\,x+2k\pi i\in F(f),$ for all $x<\log x_0$ and $k\in\Z,$ and $x+2k\pi i\in J(f),$ for all $x\ge\log x_0$ and $k\in\Z.$ Finally, there exists $\delta\in\big(0,\frac{\pi}{3}\big]$ such that $$J(f)\subset\bigcup_{k\in\Z}\{z\in\C: \text{Re}z\ge0.7,\,(2k-1)\pi+\delta<\text{Im}z<(2k+1)\pi-\delta\},$$ and $J(f)$ consists of disjoint curves tending to $\infty,$ each homeomorphic to $[0,+\infty),$ by \cite[Theorem 1.3]{RRS10}.  
		
		Note that $h|_G$ has infinite degree, so it satisfies the hypotheses of Corollary \ref{infpoly}. Thus, a dynamically associated inner function to $f|_U$ is 
		$$\hat{g}_f(z)=\sigma+2z+\frac{ic}{2}\cdot\frac{e^{i\theta}+e^{2iz}}{e^{i\theta}-e^{2iz}},\,z\in\mathbb{H},$$	
		where $\sigma,\,\theta\in\R$ and $c>0.$ In order to calculate $\sigma$ and $\theta,$ we work as in Example \ref{ex1}, using the same notation. Recall that $\varphi:\mathbb{H}\to U$ is the Riemann map such that $$\varphi^{-1}(z+2\pi i)=\varphi^{-1}(z)-\pi, \text{ for all } z\in U,$$ and $M$ is the conformal automorphism of $\mathbb{H}$ such that $$\hat{p}=\psi^{-1}\circ\pi_1\circ\varphi\circ M \text{ in } \mathbb{H}.$$ The difference in this example is that for each $k\in\Z$ there exists $j_k\in\Z$ such that $$M(l_{j_k,\,\beta}^-)=l_{k,\,\alpha}^+ \text{ and } M(l_{j_k,\,\beta}^+)=l_{k,\,\alpha}^-$$ (for the definition of the $l_{k,\,\xi}^+,\,l_{k,\,\xi}^-,\,l_k$ see \eqref{vertr}, \eqref{vertgeo}). This is due to the fact that $$\pi_1((0,\log x_0))=(1,\,x_0) \text{ and } \pi_1((-\infty,0))=(0,1)$$ (see Figure \ref{pihalf}). 
		\begin{figure}[h]
			\centering
			\includegraphics[width=0.9\linewidth]{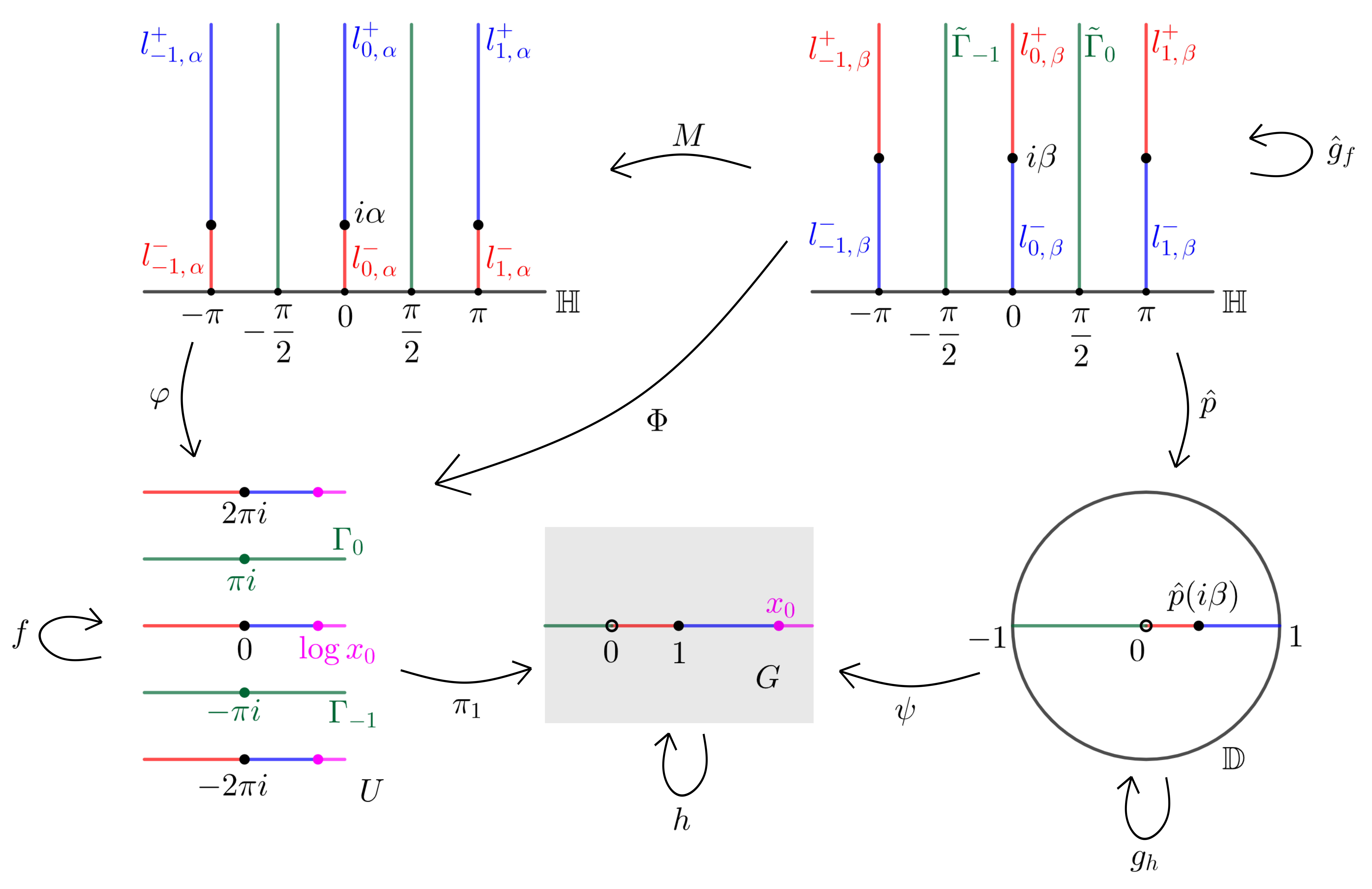}
			\caption{}
			\label{pihalf}
		\end{figure}
		It follows that $$\lim_{t\to0^+}\hat{g}_f(j_0\pi+it)=\lim_{t\to0^+}\Phi^{-1}\circ f\circ\Phi(j_0\pi+it)=j_0\pi,$$ where $j_0\in\Z$ is such that $\Phi(l_{j_0,\,\beta}^+)=(-\infty,0)$ and $\Phi(l_{j_0,\,\beta}^-)=(0,\, \log x_0),$ and $\Phi:\mathbb{H}\to U$ is defined as in Example \ref{ex1}. Thus, we cannot calculate $\theta$ using the above limit, and so we will find the poles of $\hat{g}_f$ by using different hyperbolic geodesics in $\mathbb{H}.$ 
		
		Let $$\tilde{\Gamma}_k:=\Big\{z\in\mathbb{H}: \text{Re}z=(2k+1)\frac{\pi}{2}\Big\} \text{ and } \Gamma_k:=\Big\{z\in\C: \text{Im}z=(2k+1)\pi\Big\},$$ $k\in\Z,$ and note that the hyperbolic geodesic $\tilde{\Gamma}_k$ lies in between the hyperbolic geodesics $l_k$ and $l_{k+1}$ in $\mathbb{H},$ while the horizontal line $\Gamma_k$ lies in between the horizontal lines $y=2k\pi$ and $y=2(k+1)\pi$ in the dynamical plane of $f.$ One can derive from the form of $\hat{p}$ that $\hat{p}(\tilde{\Gamma}_k)=(-1,0),$ with $$\lim_{t\to0^+}\hat{p}\Big((2k+1)\frac{\pi}{2}+it\Big)=-1 \text{ and } \lim_{t\to+\infty}\hat{p}\Big((2k+1)\frac{\pi}{2}+it\Big)=0,$$ for all $k\in\Z.$ Using similar arguments as in Example \ref{ex1}, we deduce that there exists $\nu\in\Z$ such that $\Phi(\tilde{\Gamma}_\nu)=\Gamma_0,$ with $$\lim_{t\to0^+}\Phi\Big((2\nu+1)\frac{\pi}{2}+it\Big)=+\infty+\pi i \text{ and } \lim_{t\to+\infty}\Phi\Big((2\nu+1)\frac{\pi}{2}+it\Big)=-\infty+\pi i$$ (see Figure \ref{pihalf}). Note that $f(x+\pi i)=2x-e^x-3+2\pi i,$ for all $x\in\R,$ hence $$\lim_{t\to0^+}\hat{g}_f\Big((2\nu+1)\frac{\pi}{2}+it\Big)=\lim_{t\to0^+}\Phi^{-1}\circ f\circ\Phi\Big((2\nu+1)\frac{\pi}{2}+it\Big)=\infty.$$ This means that $(2\nu+1)\frac{\pi}{2}$ is a pole of $\hat{g}_f,$ so we can show that $\theta=(2\mu+1)\pi,$ for some $\mu\in\Z,$ working as in Example \ref{ex1}. Thus, $$\hat{g}_f(z)=\sigma+2z+\frac{ic}{2}\cdot\frac{-1+e^{2iz}}{-1-e^{2iz}}=\sigma+2z+\frac{c}{2}\cdot\tan z,\,z\in\mathbb{H}.$$ Also, as in Example \ref{ex1}, $\hat{g}_f(l_{j_0})\subset l_{j_0},$ hence $\sigma=-j_0\pi.$ By considering an appropriate conjugate $g_f$ to $\hat{g}_f,$ we obtain that $$g_f:\mathbb{H}\to\mathbb{H}; \;\;\;g_f(z)=2z+\frac{c}{2}\cdot\tan z,\,z\in\mathbb{H},$$ is a dynamically associated inner function to $f|_U.$
	\end{proof}	 
	\begin{rmk}
		Using the inner function $g_h$ which is dynamically associated to $h|_G$ and appears in the proof of Theorem \ref{const}, one can show that $\psi'(0)=e^{3-c}.$ Then, well-known hyperbolic estimates imply that $0.8<c\le2.3.$
	\end{rmk}
	We finish this section with an example of the finite-degree case, which involves a function discussed in \cite[Example 4]{FH06}. 
	\begin{eg} \label{tex}
		Let $f(z)=z-\frac{1}{2}e^{-2z}+2e^{-z},\,z\in\mathbb{C}.$ Then $f$ has a sequence of forward-invariant Baker domains $\{U_k\}_{k\in\mathbb{Z}},$ and $z\mapsto\frac{2z^3+1}{2+z^3},\,z\in\mathbb{D},$ is an inner function dynamically associated to $f|_{U_k},$ for each $k\in\Z.$ 
	\end{eg}
	\begin{proof}
		Using \eqref{berl} with $\pi_{-1}(z)=e^{-z},\,z\in\C,$ we derive that $f$ is a lift of the function $$h(w)=we^{\frac{1}{2}w^2-2w},\,w\in\C.$$ The function $h$ has two real fixed points; 0 is a parabolic fixed point, with one attracting petal attached to it (for more details on the dynamics near a parabolic fixed point see, for example, \cite[§10]{milnor}), and 4 is a repelling fixed point. Let $G\subset F(h)$ be the immediate parabolic basin of 0 (so $0\in\partial G$ and $G$ is forward-invariant). An elementary study of $h$ shows that $S(h)=\{0, e^{-3/2}\},$ so $h\in\mathcal{S}\subset\mathcal{B},$ and $(0,4)\subset G,$ while $(-\infty,0]\cup[4,+\infty)\subset J(h).$
		
		Regarding the dynamical plane of $f,$ the horizontal lines $l_k=\{z\in\C: \text{Im}z=(2k+1)\pi\},$ $k\in\Z,$ are in $J(f),$ because of \eqref{fh} and the fact that $\pi_{-1}(\l_k)=(-\infty,0)\subset J(h),$ for all $k\in\Z.$ Next, let $\{U_k\}_{k\in\Z}$ be the connected components of $\pi_{-1}^{-1}(G)$ (see Corollary \ref{liftgl}(ii)). Then $U_{k+1}=U_k-2\pi i,$ for all $k\in\Z,$ so we can assume, without loss of generality, that $U_0\subset\{z\in\C: -\pi<\text{Im}z<\pi\}.$ Since $0,\,f(0)=\frac{3}{2}\in U_0$ (due to \eqref{fh}), we derive that $U_0$ is forward-invariant and hence, each $U_k$ is forward-invariant by Remark \ref{wdpli}(1)(b). Also, \eqref{berl} implies that each $U_k$ is a Baker domain of $f.$ 
		
		It is clear that $h$ satisfies the assumptions of Corollary \ref{finpoly}, hence, if $g_h$ is a dynamically associated inner function to $h|_G,$ then $g_h$ is a finite Blaschke product of degree 3, and it is dynamically associated to $f|_{U_k},$ for all $k\in\Z.$ In order to calculate $g_h$ explicitly, we use the fact that it is dynamically associated to $U_0$ (we chose $U_0$ because it is symmetric with respect to the real axis). Note that $g_h$ is doubly-parabolic (for the definition of a doubly-parabolic self-map of $\D$ see, for example, \cite[Theorem 2.1]{Jov24}), as it is dynamically associated to a parabolic Fatou component (see \cite[Theorem 10.9, Corollary 10.13]{milnor}). Working as in the first part of the proof of \cite[Theorem 5.2]{BD99}, and taking into account that 0 is the only critical point of $f|_{U_0}$ and $f$ has local degree 3 at 0, we can show that, for an appropriate choice of the Riemann map, $g_h(z)=\frac{z^3+\alpha}{1+\alpha z^3},$ where $\alpha=g_h(0)\in(0,1).$ In order to calculate $\alpha,$ we use the fact that 1 is the Denjoy-Wolff point of $g_h$ and $g_h$ is doubly-parabolic, which imply that $g_h'(1)=1.$ Thus, $\alpha=\frac{1}{2}.$ 
	\end{proof}
	\printbibliography

@article {EFXS19,
    AUTHOR = {Evdoridou, V. and Fagella, N. and Jarque, X.
              and Sixsmith, D. J.},
     TITLE = {Singularities of inner functions associated with hyperbolic
              maps},
   JOURNAL = {J. Math. Anal. Appl.},
  FJOURNAL = {Journal of Mathematical Analysis and Applications},
    VOLUME = {477},
      YEAR = {2019},
    NUMBER = {1},
     PAGES = {536--550},
      ISSN = {0022-247X,1096-0813},
   MRCLASS = {30J05 (30D05 30D15 37F10)},
  MRNUMBER = {3950051},
MRREVIEWER = {Crist\'obal\ Gonz\'alez},
       DOI = {10.1016/j.jmaa.2019.04.045},
       URL = {https://doi.org/10.1016/j.jmaa.2019.04.045},
}

@article {BerFR15,
    AUTHOR = {Bergweiler, W. and Fagella, N. and Rempe-Gillen,
              L.},
     TITLE = {Hyperbolic entire functions with bounded {F}atou components},
   JOURNAL = {Comment. Math. Helv.},
  FJOURNAL = {Commentarii Mathematici Helvetici. A Journal of the Swiss
              Mathematical Society},
    VOLUME = {90},
      YEAR = {2015},
    NUMBER = {4},
     PAGES = {799--829},
      ISSN = {0010-2571,1420-8946},
   MRCLASS = {37F10 (30D05 37F15)},
  MRNUMBER = {3433280},
MRREVIEWER = {David\ Jonathon\ Sixsmith},
       DOI = {10.4171/CMH/371},
       URL = {https://doi.org/10.4171/CMH/371},
}

@article {ERS20,
    AUTHOR = {Evdoridou, V. and Rempe, L. and Sixsmith, D. J.},
     TITLE = {Fatou's associates},
   JOURNAL = {Arnold Math. J.},
  FJOURNAL = {Arnold Mathematical Journal},
    VOLUME = {6},
      YEAR = {2020},
    NUMBER = {3-4},
     PAGES = {459--493},
      ISSN = {2199-6792,2199-6806},
   MRCLASS = {37F10 (30D05 30J05 30J10)},
  MRNUMBER = {4181721},
MRREVIEWER = {Kirill\ Lazebnik},
       DOI = {10.1007/s40598-020-00148-6},
       URL = {https://doi.org/10.1007/s40598-020-00148-6},
}

@article {B84,
    AUTHOR = {Baker, I. N.},
     TITLE = {Wandering domains in the iteration of entire functions},
   JOURNAL = {Proc. London Math. Soc. (3)},
  FJOURNAL = {Proceedings of the London Mathematical Society. Third Series},
    VOLUME = {49},
      YEAR = {1984},
    NUMBER = {3},
     PAGES = {563--576},
      ISSN = {0024-6115,1460-244X},
   MRCLASS = {58F11 (30D05)},
  MRNUMBER = {759304},
MRREVIEWER = {Robert\ L.\ Devaney},
       DOI = {10.1112/plms/s3-49.3.563},
       URL = {https://doi.org/10.1112/plms/s3-49.3.563},
}

@article {BFXK17,
    AUTHOR = {Bara\'nski, K. and Fagella, N. and Jarque, X.
              and Karpi\'nska, B.},
     TITLE = {Accesses to infinity from {F}atou components},
   JOURNAL = {Trans. Amer. Math. Soc.},
  FJOURNAL = {Transactions of the American Mathematical Society},
    VOLUME = {369},
      YEAR = {2017},
    NUMBER = {3},
     PAGES = {1835--1867},
      ISSN = {0002-9947,1088-6850},
   MRCLASS = {37F10 (30D05 30D30)},
  MRNUMBER = {3581221},
MRREVIEWER = {Tarakanta\ Nayak},
       DOI = {10.1090/tran/6739},
       URL = {https://doi.org/10.1090/tran/6739},
}

@article {BJR12,
    AUTHOR = {Bara\'nski, K. and Jarque, X. and Rempe, L.},
     TITLE = {Brushing the hairs of transcendental entire functions},
   JOURNAL = {Topology Appl.},
  FJOURNAL = {Topology and its Applications},
    VOLUME = {159},
      YEAR = {2012},
    NUMBER = {8},
     PAGES = {2102--2114},
      ISSN = {0166-8641,1879-3207},
   MRCLASS = {37F10 (28A80 30D05 37F50 54F15 54H20)},
  MRNUMBER = {2902745},
MRREVIEWER = {Anand\ Prakash\ Singh},
       DOI = {10.1016/j.topol.2012.02.004},
       URL = {https://doi.org/10.1016/j.topol.2012.02.004},
}

@article {Ber95,
    AUTHOR = {Bergweiler, W.},
     TITLE = {On the {J}ulia set of analytic self-maps of the punctured
              plane},
   JOURNAL = {Analysis},
  FJOURNAL = {Analysis. International Mathematical Journal of Analysis and
              its Applications},
    VOLUME = {15},
      YEAR = {1995},
    NUMBER = {3},
     PAGES = {251--256},
      ISSN = {0174-4747},
   MRCLASS = {30D05 (30D15 30D45)},
  MRNUMBER = {1357963},
MRREVIEWER = {Eugen\ Mihailescu},
       DOI = {10.1524/anly.1995.15.3.251},
       URL = {https://doi.org/10.1524/anly.1995.15.3.251},
}

@article {EL92,
    AUTHOR = {Er\"emenko, A. \`E. and Lyubich, M. Yu.},
     TITLE = {Dynamical properties of some classes of entire functions},
   JOURNAL = {Ann. Inst. Fourier (Grenoble)},
  FJOURNAL = {Universit\'e{} de Grenoble. Annales de l'Institut Fourier},
    VOLUME = {42},
      YEAR = {1992},
    NUMBER = {4},
     PAGES = {989--1020},
      ISSN = {0373-0956,1777-5310},
   MRCLASS = {30D05 (58F23)},
  MRNUMBER = {1196102},
MRREVIEWER = {Ben\ Bielefeld},
       DOI = {10.5802/aif.1318},
       URL = {https://doi.org/10.5802/aif.1318},
}

@incollection {barg,
    AUTHOR = {Bargmann, D.},
     TITLE = {Iteration of inner functions and boundaries of components of
              the {F}atou set},
 BOOKTITLE = {Transcendental dynamics and complex analysis},
    SERIES = {London Math. Soc. Lecture Note Ser.},
    VOLUME = {348},
     PAGES = {1--36},
 PUBLISHER = {Cambridge Univ. Press, Cambridge},
      YEAR = {2008},
      ISBN = {978-0-521-68372-2},
   MRCLASS = {30D05 (37F10 37F50)},
  MRNUMBER = {2458797},
MRREVIEWER = {Peter\ Ha\"issinsky},
       DOI = {10.1017/CBO9780511735233.003},
       URL = {https://doi.org/10.1017/CBO9780511735233.003},
}

@article {KU,
    AUTHOR = {Kotus, J. and Urba\'nski, M.},
     TITLE = {The dynamics and geometry of the {F}atou functions},
   JOURNAL = {Discrete Contin. Dyn. Syst.},
  FJOURNAL = {Discrete and Continuous Dynamical Systems. Series A},
    VOLUME = {13},
      YEAR = {2005},
    NUMBER = {2},
     PAGES = {291--338},
      ISSN = {1078-0947,1553-5231},
   MRCLASS = {37F35 (37D35 37F10 37F15)},
  MRNUMBER = {2152392},
MRREVIEWER = {Peter\ Ha\"issinsky},
       DOI = {10.3934/dcds.2005.13.291},
       URL = {https://doi.org/10.3934/dcds.2005.13.291},
}

@article {BD99,
    AUTHOR = {Baker, I. N. and Dom\'inguez, P.},
     TITLE = {Boundaries of unbounded {F}atou components of entire
              functions},
   JOURNAL = {Ann. Acad. Sci. Fenn. Math.},
  FJOURNAL = {Annales Academi\ae\ Scientiarum Fennic\ae. Mathematica},
    VOLUME = {24},
      YEAR = {1999},
    NUMBER = {2},
     PAGES = {437--464},
      ISSN = {1239-629X,1798-2383},
   MRCLASS = {37F10 (30D05 30D40 37F50)},
  MRNUMBER = {1724391},
MRREVIEWER = {Walter\ Bergweiler},
}

@article {Fatou,
    AUTHOR = {Fatou, P.},
     TITLE = {Sur l'it\'eration des fonctions transcendantes enti\`eres},
   JOURNAL = {Acta Math.},
  FJOURNAL = {Acta Mathematica},
    VOLUME = {47},
      YEAR = {1926},
    NUMBER = {4},
     PAGES = {337--370},
      ISSN = {0001-5962,1871-2509},
   MRCLASS = {99-04},
  MRNUMBER = {1555220},
       DOI = {10.1007/BF02559517},
       URL = {https://doi.org/10.1007/BF02559517},
}

@article {Ber93,
    AUTHOR = {Bergweiler, W.},
     TITLE = {Iteration of meromorphic functions},
   JOURNAL = {Bull. Amer. Math. Soc. (N.S.)},
  FJOURNAL = {American Mathematical Society. Bulletin. New Series},
    VOLUME = {29},
      YEAR = {1993},
    NUMBER = {2},
     PAGES = {151--188},
      ISSN = {0273-0979,1088-9485},
   MRCLASS = {30D05 (58F23)},
  MRNUMBER = {1216719},
MRREVIEWER = {I.\ N.\ Baker},
       DOI = {10.1090/S0273-0979-1993-00432-4},
       URL = {https://doi.org/10.1090/S0273-0979-1993-00432-4},
}

@article {BeniEFRS,
    AUTHOR = {Benini, A. M. and Evdoridou, V. and Fagella,
              N. and Rippon, P. J. and Stallard, G. M.},
     TITLE = {Classifying simply connected wandering domains},
   JOURNAL = {Math. Ann.},
  FJOURNAL = {Mathematische Annalen},
    VOLUME = {383},
      YEAR = {2022},
    NUMBER = {3-4},
     PAGES = {1127--1178},
      ISSN = {0025-5831,1432-1807},
   MRCLASS = {37F10},
  MRNUMBER = {4458398},
       DOI = {10.1007/s00208-021-02252-0},
       URL = {https://doi.org/10.1007/s00208-021-02252-0},
}

@article {BarK07,
    AUTHOR = {Bara\'nski, K. and Karpi\'nska, B.},
     TITLE = {Coding trees and boundaries of attracting basins for some
              entire maps},
   JOURNAL = {Nonlinearity},
  FJOURNAL = {Nonlinearity},
    VOLUME = {20},
      YEAR = {2007},
    NUMBER = {2},
     PAGES = {391--415},
      ISSN = {0951-7715,1361-6544},
   MRCLASS = {37F10 (30D05 30D40 37F50)},
  MRNUMBER = {2290468},
MRREVIEWER = {Rich\ L.\ Stankewitz},
       DOI = {10.1088/0951-7715/20/2/008},
       URL = {https://doi.org/10.1088/0951-7715/20/2/008},
}

@article {JoF25,
    AUTHOR = {Jov\'e, A. and Fagella, N.},
     TITLE = {Boundary dynamics in unbounded {F}atou components},
   JOURNAL = {Trans. Amer. Math. Soc.},
  FJOURNAL = {Transactions of the American Mathematical Society},
    VOLUME = {378},
      YEAR = {2025},
    NUMBER = {4},
     PAGES = {2321--2362},
      ISSN = {0002-9947,1088-6850},
   MRCLASS = {37F10 (30D05 37F12)},
  MRNUMBER = {4880450},
       DOI = {10.1090/tran/9287},
       URL = {https://doi.org/10.1090/tran/9287},
}

@article {herman,
    AUTHOR = {Herman, M. R.},
     TITLE = {Exemples de fractions rationnelles ayant une orbite dense sur
              la sph\`ere de {R}iemann},
   JOURNAL = {Bull. Soc. Math. France},
  FJOURNAL = {Bulletin de la Soci\'et\'e{} Math\'ematique de France},
    VOLUME = {112},
      YEAR = {1984},
    NUMBER = {1},
     PAGES = {93--142},
      ISSN = {0037-9484},
   MRCLASS = {58F08 (30D05 58F11)},
  MRNUMBER = {771920},
MRREVIEWER = {I.\ N.\ Baker},
       URL = {http://www.numdam.org/item?id=BSMF_1984__112__93_0},
}

@article {RRS10,
    AUTHOR = {Rempe, L. and Rippon, P. J. and Stallard, G. M.},
     TITLE = {Are {D}evaney hairs fast escaping?},
   JOURNAL = {J. Difference Equ. Appl.},
  FJOURNAL = {Journal of Difference Equations and Applications},
    VOLUME = {16},
      YEAR = {2010},
    NUMBER = {5-6},
     PAGES = {739--762},
      ISSN = {1023-6198,1563-5120},
   MRCLASS = {37F10 (30D05)},
  MRNUMBER = {2675603},
MRREVIEWER = {Anand\ Prakash\ Singh},
       DOI = {10.1080/10236190903282824},
       URL = {https://doi.org/10.1080/10236190903282824},
}

@article {RS99,
    AUTHOR = {Rippon, P. J. and Stallard, G. M.},
     TITLE = {Families of {B}aker domains. {II}},
   JOURNAL = {Conform. Geom. Dyn.},
  FJOURNAL = {Conformal Geometry and Dynamics. An Electronic Journal of the
              American Mathematical Society},
    VOLUME = {3},
      YEAR = {1999},
     PAGES = {67--78},
      ISSN = {1088-4173},
   MRCLASS = {37F10 (30D05 37F50)},
  MRNUMBER = {1689255},
MRREVIEWER = {Walter\ Bergweiler},
       DOI = {10.1090/S1088-4173-99-00045-4},
       URL = {https://doi.org/10.1090/S1088-4173-99-00045-4},
}

@article {K89,
    AUTHOR = {Keen, L.},
     TITLE = {Topology and growth of a special class of holomorphic
              self-maps of {${\bf C}^*$}},
   JOURNAL = {Ergodic Theory Dynam. Systems},
  FJOURNAL = {Ergodic Theory and Dynamical Systems},
    VOLUME = {9},
      YEAR = {1989},
    NUMBER = {2},
     PAGES = {321--328},
      ISSN = {0143-3857,1469-4417},
   MRCLASS = {30D05 (32G15 58F08)},
  MRNUMBER = {1007413},
MRREVIEWER = {Feliks\ Przytycki},
       DOI = {10.1017/S0143385700004995},
       URL = {https://doi.org/10.1017/S0143385700004995},
}

@book{zakeri,
  title={A course in complex analysis},
  author={Zakeri, S.},
  year={2021},
  publisher={Princeton University Press}
}

@article {topf,
    AUTHOR = {T\"opfer, H.},
     TITLE = {\"Uber die {I}teration der ganzen transzendenten {F}unktionen,
              insbesondere von {$\sin z$} und {$\cos z$}},
   JOURNAL = {Math. Ann.},
  FJOURNAL = {Mathematische Annalen},
    VOLUME = {117},
      YEAR = {1939},
     PAGES = {65--84},
      ISSN = {0025-5831,1432-1807},
   MRCLASS = {30.0X},
  MRNUMBER = {1293},
MRREVIEWER = {G.\ Valiron},
       DOI = {10.1007/BF01450008},
       URL = {https://doi.org/10.1007/BF01450008},
}

@article {dg87,
    AUTHOR = {Devaney, R. L. and Goldberg, L. R.},
     TITLE = {Uniformization of attracting basins for exponential maps},
   JOURNAL = {Duke Math. J.},
  FJOURNAL = {Duke Mathematical Journal},
    VOLUME = {55},
      YEAR = {1987},
    NUMBER = {2},
     PAGES = {253--266},
      ISSN = {0012-7094,1547-7398},
   MRCLASS = {30D05 (58F08)},
  MRNUMBER = {894579},
MRREVIEWER = {I.\ N.\ Baker},
       DOI = {10.1215/S0012-7094-87-05513-X},
       URL = {https://doi.org/10.1215/S0012-7094-87-05513-X},
}

@article {BE95,
    AUTHOR = {Bergweiler, W. and Eremenko, A.},
     TITLE = {On the singularities of the inverse to a meromorphic function
              of finite order},
   JOURNAL = {Rev. Mat. Iberoamericana},
  FJOURNAL = {Revista Matem\'atica Iberoamericana},
    VOLUME = {11},
      YEAR = {1995},
    NUMBER = {2},
     PAGES = {355--373},
      ISSN = {0213-2230},
   MRCLASS = {30D30},
  MRNUMBER = {1344897},
MRREVIEWER = {J.\ Clunie},
       DOI = {10.4171/RMI/176},
       URL = {https://doi.org/10.4171/RMI/176},
}

@book{ivers,
  title={Recherches sur les fonctions inverses des fonctions m{\'e}romorphes},
  author={Iversen, F.},
  url={https://books.google.co.uk/books?id=C2fQAAAAMAAJ},
  year={1914},
  publisher={Imprimerie de la Soci{\'e}t{\'e} de litt{\'e}rature finnoise}
}

@misc{Jov24,
      title={Boundaries of hyperbolic and simply parabolic Baker domains}, 
      author={A. Jové},
      year={2024},
      eprint={2410.19726},
      archivePrefix={arXiv},
      primaryClass={math.DS},
      note={Preprint, arXiv:2410.19726v1},
}

@misc{E21,
      title={Singularities of inverse functions}, 
      author={A. Eremenko},
      year={2021},
      eprint={2110.06134},
      archivePrefix={arXiv},
      primaryClass={math.CV},
      note={arXiv:2110.06134v1},
}

@article {FH06,
    AUTHOR = {Fagella, N. and Henriksen, Ch.},
     TITLE = {Deformation of entire functions with {B}aker domains},
   JOURNAL = {Discrete Contin. Dyn. Syst.},
  FJOURNAL = {Discrete and Continuous Dynamical Systems. Series A},
    VOLUME = {15},
      YEAR = {2006},
    NUMBER = {2},
     PAGES = {379--394},
      ISSN = {1078-0947,1553-5231},
   MRCLASS = {37F10 (30D20 37F50)},
  MRNUMBER = {2199435},
MRREVIEWER = {Walter\ Bergweiler},
       DOI = {10.3934/dcds.2006.15.379},
       URL = {https://doi.org/10.3934/dcds.2006.15.379},
}

@book {milnor,
    AUTHOR = {Milnor, J.},
     TITLE = {Dynamics in one complex variable},
    SERIES = {Annals of Mathematics Studies},
    VOLUME = {160},
   EDITION = {Third},
 PUBLISHER = {Princeton University Press, Princeton, NJ},
      YEAR = {2006},
     PAGES = {viii+304},
      ISBN = {978-0-691-12488-9; 0-691-12488-4},
   MRCLASS = {37Fxx (30-01 30D05 37-01)},
  MRNUMBER = {2193309},
}

@unknown{FJ25,
author = {Rodrigues Ferreira, G. and Jové, A.},
year = {2025},
month = {10},
pages = {},
title = {Boundaries of multiply connected Fatou components. A unified approach},
doi = {10.48550/arXiv.2510.09241},
note={Preprint, arXiv:2510.09241v1},
}
\end{document}